\renewcommand{\tilde}{\widetilde} 
\newcommand{\ft}{{\mathfrak t}}
\newcommand{\g}{{\mathfrak g}}
 \newcommand{\sF}{{\mathcal F}}
\newcommand{\A}{{\mathbb A}} 
\newcommand{\E}{{\mathbb E}} \newcommand{\F}{{\mathbb F}}
\newcommand{\G}{{\mathbb G}}
\newcommand{\Q}{{\mathbb Q}} \newcommand{\R}{{\mathbb R}}
\newcommand{\Z}{{\mathbb Z}} 
\newcommand{\bA}{{\bf A}}
\newcommand{\bB}{{\bf B}}
\newcommand{\bC}{{\bf C}}
\newcommand{\bD}{{\bf D}}
\newcommand{\bF}{{\bf F}}
\newcommand{\bE}{{\bf E}}
\newcommand{\bG}{{\bf G}}
\newcommand{\ve}{\varepsilon}
\theoremstyle{plain}
\newtheorem{theorem}{Theorem}
\newtheorem{lemma}{Lemma}
\newtheorem{proposition}{Proposition}
\newtheorem{corollary}{Corollary}
\theoremstyle{definition}
\newtheorem{defn}{Definition}
\theoremstyle{remark}
\newtheorem*{remark}{Remark}
\newtheorem*{example}{Example}
\newtheorem*{ack}{Acknowledgement}
\begin{document} 

\title[Splitting fields of Zariski dense subgroups]
{On the splitting fields of generic elements\\ in Zariski dense
subgroups} 

\author[S. Pisolkar]{Supriya Pisolkar} \address{ Indian Institute of
Science, Education and Research (IISER),  Homi Bhabha Road, Pashan,
Pune - 411008, India} \email{supriya@iiserpune.ac.in} 

\author[C. S. Rajan]{C. S. Rajan} \address{School of mathematics, Tata
Institute of Fundamental Research, Homi Bhabha Road, Mumbai- 400005,
India} \email{rajan@math.tifr.res.in} 

\date{}

\begin{abstract}  Let $G$  be a connected, absolutely almost simple,
algebraic group defined over a finitely generated, infinite  field
$K$, and  let $\Gamma$ be a  Zariski dense subgroup of $G(K)$.  We
show, apart from some few exceptions,  that the commensurability class
of the field  ${\sF}$ given by the compositum  of  the splitting
fields of  characteristic polynomials of  generic elements of $\Gamma$
determines the group $G$ upto isogeny over the algebraic closure of
$K$. 
\end{abstract}

\maketitle

\section{Introduction}\label{Intro} The inverse spectral theory
problem in Riemannian geometry, is to recover properties of a
Riemannian manifold from the knowledge of the spectra of natural
differential operators associated to the manifold.  This problem has
been intensely studied in the context of Riemannian locally symmetric
spaces by various authors. 

In (\cite{PRwc})  G. Prasad and A. S. Rapinchuk introduced the notion
of weak commensurability for Zariski dense subgroups in absolutely
almost simple connected algebraic groups.
This notion is weaker than commensurability, but  they  showed
that weak  commensurability  of arithmetic lattices
implies commensurability in many instances. 
   
As an application, and assuming the validity of Schanuel's conjecture
on transcendental numbers in the case of  higher rank lattices, they
obtained commensurability results for  the corresponding locally
symmetric space defined by the arithmetic lattices, which are
isospectral with  respect to the Laplacian associated to  the
invariant metric acting on the space of smooth functions. 

In  (\cite{BPR}), the authors considered representation equivalent
lattices. If two uniform lattices are representation equivalent,  then
the corresponding Riemannian locally symmetric spaces are `strongly
isospectral'; in particular, they are isospectral for the Laplacian. 

One of the major advantages for considering the stronger but natural
notion of representation equivalence is that commensurability type
results for  representation equivalent lattices can be obtained
without appealing to Schanuel's conjecture. 

It can be seen by an application of the trace formula, that
representation equivalent uniform lattices are characteristic
equivalent.  For an algebraic group $G$ defined over a field $K$ and
an element $\gamma\in G(K)$, let  $P(\gamma, Ad)$ denote the
characteristic polynomial of  $\gamma$ in $G(K)$ with respect to the
adjoint representation $Ad$ of $G$ on its Lie algebra.  Let $G_1,
~G_2$ be connected absolutely almost simple algebraic groups defined
over a number field $K$. Two Zariski dense subgroups $\Gamma_i\subset
G_i(K), ~i=1,2$ are said to be {\em characteristic equivalent} if the
collection of characteristic polynomials $P(\gamma, Ad)$ of elements
$\gamma$ in $\Gamma_1$ (resp. $\Gamma_2$) are equal.

The  concept of characteristic equivalence is   stronger than that of
weak commensurability. From characteristic equivalence,  the
commensurability results of (\cite{PRwc}) follows more directly and
easily using the methods of (\cite{PRwc}).  

For an arithmetic lattice, the lengths of closed geodesics can be
expressed as a sum of logarithms of algebraic numbers.  In a
subsequent paper (\cite{PRl}),   Prasad and Rapinchuk studied the
compositum of  the fields generated over the field of algebraic
numbers,  by the lengths of closed geodesics on the locally Riemannian
symmetric space defined by the lattice.  Upon certain hypothesis on
the Weyl group and conditional on the validity of Schanuel's
conjecture on transcendental numbers, they showed that the fields are
quite different, provided the lattices are not commensurable.  

In this paper, we examine an analogous question, in the context of
characteristic equivalence. We consider the compositum of the
splitting fields of the characteristic polynomials of generic elements
of $G(K)$ contained in the  Zariski dense subgroup $\Gamma\subset
G(K)$. Apart from some few exceptions, we show that this field
determines the `commensurability class' of $\Gamma$, i.e., the group
$G$ upto isogeny over the algebraic closure of $K$. The advantage is
that the question in a purely algebraic setting, and Schanuel's
conjecture need not be invoked.

\section{Generic elements} Let $G$  be a connected, semisimple,
algebraic group defined over an infinite field $K$. Fix an algebraic
closure $\bar{K}$ of $K$. Let $T$ be a maximal torus in $G$ defined
over $K$, and let $X^*(T)={\rm Hom}_{\bar{K}}(T, \G_m)$ be the
character group of $T$ over $\bar{K}$.  The Galois group $G_K:={\rm
Gal}(\bar{K}/K)$ acts on $X^*(T)$ by the action, 
\[\sigma\chi(t)=\sigma(\chi(\sigma^{-1}(t))), \quad \sigma \in  G_K,
~t\in T(\bar{K}), ~\chi\in X^*(T).\]
Let $\g, ~\ft$ denote the Lie algebras of $G$ and $T$ respectively, 
 and denote by 
$\Phi = \Phi(G,T)\subset X^*(T)$ the root system
corresponding to the pair $(G, T)$:
\[ \g\otimes \bar{K}= \ft\otimes \bar{K}~~ \bigoplus_{\chi\in
  \Phi}\g_{\chi},\]
where $\g_{\chi}=\{X\in \g\otimes \bar{K}\mid Ad(t)(X)=\chi(t)(X),
~g\in G(\bar{K})\}$. 
The absolute Galois group
$G_K$ preserves  the root system $\Phi$, 
inducing  an injective  homomorphism 
\[ \theta_T : G_K\to \mbox{Aut}(\Phi(G,T)).\]
Suppose $g\in G(K)$ be a semi-simple regular element. 
Denote by   $T_g$ the connected component of its centralizer $Z(g)$ in
$G$.  Let $K_g$ denote minimal splitting field 
of ${T_g}$ in $\bar{K}$.
\begin{defn} Let $g\in G(K)$ be a regular semisimple element. 
Define $g$ to be  {\em generic} (or $K$-generic, or generic  $K$-regular), 
if the image $\theta_{T_g}(G_K)$ contains the Weyl group $W$.
\end{defn}  
Equivalently, the Galois group $G(K_g/K)$ contains a
subgroup  isomorphic to the Weyl group $W$. The notion of genericity
is sensitive to the underlying field $K$, and is not stable under base
change to a larger field $L$.

\begin{example} An element $g\in SL_n(K), ~n\geq 2$ is generic if and
  only if the Galois group over $K$ of the 
splitting field of its characteristic polynomial with
  respect to the natural representation is isomorphic to the symmetric
  group $S_n$ on $n$-symbols. 
\end{example}

\begin{remark}
Generic tori  was studied by Voskresenskii (\cite{V}) and 
called by him  as {\em tori without effect}. 
The nomenclature generic used here, 
stems from the fact that  generic (in the sense of algebraic
geometry) tori, satisfy this hypothesis.
We refer to \cite[Section 9]{PRg} and the references
contained in it for detailed discussions about generic tori and elements
and their properties. 
\end{remark}

\subsection{Splitting fields of characteristic polynomials}
We relate the notion of generic elements to characteristic
polynomials. 
Let $P(x, g, Ad)$ be the characteristic polynomial 
of the linear transformation $Ad(g)$, corresponding to the adjoint
representation $Ad$ of $G$ acting on its Lie algebra. For an element
$g\in G(K)$, denote by 
$K(g, Ad)$ the splitting field $P(x, g, Ad)$ over $K$.  

\begin{proposition}\label{Kgad} 
Let G be a connected, absolutely almost simple
algebraic group defined over an infinite field $K$. 
 Let  $g \in G(K)$ be a generic $K$-regular element
of infinite order.  With
the above notation, $K(g, Ad)=K_g$. 
\end{proposition}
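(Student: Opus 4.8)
The plan is to identify the subgroups of $G_K$ fixing $K(g,Ad)$ and $K_g$ respectively and to show they are equal; the one substantial input is a vanishing statement about characters of $T_g$, and that is the place where the infinite order and the genericity of $g$ are used.

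First I would describe $K(g,Ad)$ explicitly. Since $g$ is regular semisimple, $T_g$ (the connected component of $Z(g)$) is a maximal torus, defined over $K$, with $g\in T_g(K)$; using the root space decomposition of $\g\otimes\bar{K}$ with respect to $T_g$ one gets
\[
P(x,g,Ad)=(x-1)^{\dim T_g}\prod_{\chi\in\Phi(G,T_g)}\bigl(x-\chi(g)\bigr),
\]
so $K(g,Ad)$ is generated over $K$ by the values $\chi(g)$, $\chi\in\Phi(G,T_g)$. As $g\in G(K)$ we have $\sigma(\chi(g))=(\sigma\chi)(g)$ for every $\sigma\in G_K$, so $\sigma$ fixes $K(g,Ad)$ pointwise exactly when $(\sigma\chi-\chi)(g)=1$ for all roots $\chi$. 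Next, because $G$ is absolutely almost simple the roots span $X^*(T_g)\otimes\Q$, so an element of $G_K$ fixing every root acts trivially on all of $X^*(T_g)$; hence $\ker\theta_{T_g}$ coincides with the kernel of the Galois action on $X^*(T_g)$, whence $K_g=\bar{K}^{\ker\theta_{T_g}}$ and $\sigma$ fixes $K_g$ pointwise exactly when $\sigma\chi=\chi$ for all roots $\chi$. In particular the inclusion $K(g,Ad)\subseteq K_g$ is immediate, since over $K_g$ the torus $T_g$ splits and so every $\chi(g)$ lies in $K_g$.

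The heart of the matter is the reverse inclusion, which I would reduce to the claim that $M:=\{\chi\in X^*(T_g)\mid\chi(g)=1\}=0$. The subgroup $M$ is stable under the natural $G_K$-action on $X^*(T_g)$ (if $\chi(g)=1$ then $(\sigma\chi)(g)=\sigma(\chi(g))=1$), hence stable under $\theta_{T_g}(G_K)$, which contains the Weyl group $W$ by genericity. Therefore $M\otimes\Q$ is a $W$-stable subspace of $X^*(T_g)\otimes\Q$; but $\Phi(G,T_g)$ is irreducible because $G$ is absolutely almost simple, so $X^*(T_g)\otimes\Q$ is the irreducible reflection representation of $W$, and $M\otimes\Q$ must be $0$ or all of $X^*(T_g)\otimes\Q$. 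In the second case $M$ has finite index $m$ in $X^*(T_g)$, so $\chi(g)^m=(m\chi)(g)=1$ for every character $\chi$ of $T_g$; since the characters of a torus separate its points, this gives $g^m=1$, contradicting that $g$ has infinite order. Hence $M\otimes\Q=0$, and so $M=0$.

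Granting the claim, if $\sigma\in G_K$ fixes $K(g,Ad)$ pointwise then $(\sigma\chi-\chi)(g)=1$, i.e.\ $\sigma\chi-\chi\in M=0$, for every root $\chi$, so $\sigma\chi=\chi$ for all $\chi\in\Phi(G,T_g)$, i.e.\ $\sigma\in\ker\theta_{T_g}$, so $\sigma$ fixes $K_g$ pointwise. Thus $K_g\subseteq K(g,Ad)$, and with the first inclusion we conclude $K(g,Ad)=K_g$. I expect the claim to be the only real obstacle: it is the sole step using the infinite order and the genericity of $g$, while the rest is the root space decomposition, the Galois theory of tori, and the elementary observation that $\ker\theta_{T_g}$ records the full Galois action on $X^*(T_g)$ because the roots span.
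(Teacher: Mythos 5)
Your proof is correct and follows essentially the same route as the paper: both reduce the reverse inclusion to the irreducibility of the reflection representation of $W$ on $X^*(T_g)\otimes\Q$ (via genericity and absolute almost simplicity) together with the infinite order of $g$. The only cosmetic difference is that the paper phrases the key step as Zariski density of $\langle g\rangle$ in the $K$-irreducible torus $T_g$, while you phrase the dual statement on the character lattice, namely that the sublattice $M=\{\chi\mid\chi(g)=1\}$ vanishes.
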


\begin{proof}
Since $g$ is a regular semisimple element, $g\in
  T_g(K)=Z_G(g)^0(K)$. By definition, the tori $T_g$ splits over
  $K_g$. For $\alpha\in \Phi=\Phi(G,T_g)$, the value
  $\alpha(g)\in K_g$. Since the non-zero roots of the
  characteristic polynomial of the transformation $Ad(g)$ are given by
  $\alpha(g)$ for  $\alpha\in \Phi$, this implies that $K(g,
  Ad)\subset K_g$.

For the reverse inclusion, the elements $\alpha(g^n)\in K(g, Ad)$ for
$n\in \Z$ and $\alpha\in \Phi$.
Since $G$ is absolutely almost simple, the
action of the Weyl group $W$ on $X^*(T)\otimes \Q$ is
irreducible. From the correspondence between tori and its character
groups considered as modules for the Galois group, it follows that a
generic regular $K$-torus is irreducible over $K$, 
in that it cannot be written as an almost
direct product of two $K$-tori. Hence if $g\in G(K)$ is
generic $K$-regular, then  $T_g$ is a $K$-irreducible torus.

Since $g$ is of infinite order, the group generated by $g$ is Zariski
dense in $T_g$.  Hence every root $\alpha \in \Phi$ is $K(g,
Ad)$-rational character of the $K$-torus $T_g$. 
Since $\Phi$ is a basis for $X^*(T_g)\otimes \Q$, it
follows that every character of $T_g$ is $K(g, Ad)$- rational. Hence
$K_g\subset K(g, Ad) $. 

\end{proof}

\subsection{A theorem of Prasad-Rapinchuk} 
The following theorem of G. Prasad and A. Rapinchuk (\cite{PRr}, \cite{PRg}) 
guarantees the existence of generic regular elements in Zariski
dense subgroups: 
\begin{theorem}\cite[Theorem 9.6]{PRg}\label{prthm}
Let $G$ be a connected absolutely almost
simple algebraic group over a finitely generated, infinite field $K$. Let
$\Gamma \subset G(K)$ be a  Zariski dense subgroup in $G$. 
Then $\Gamma$ contains a $K$-generic element of
infinite order.
\end{theorem}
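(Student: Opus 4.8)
The plan is to reduce the statement to the existence of a suitable generic maximal $K$-torus in $G$, and then to use strong approximation / specialization arguments together with the fact that Zariski dense subgroups are "big enough" to meet any such torus. First I would recall that a $K$-torus $T\subset G$ is generic precisely when the Galois image $\theta_T(G_K)$ inside $\mathrm{Aut}(\Phi(G,T))$ contains the Weyl group $W$; equivalently, the character lattice $X^*(T)$, as a $G_K$-module, realizes $W$ inside its automorphism group. So the first step is: construct at least one maximal $K$-torus $T_0\subset G$ that is generic over $K$. Over a finitely generated infinite field this can be done by a Hilbert-irreducibility type argument — the variety of maximal tori of $G$ is a rational $K$-variety (it is $G/N_G(T)$), the "universal" torus over it has Galois action on its character lattice with image the full $W\rtimes(\text{diagram automorphisms})$, and by Hilbert irreducibility (valid over finitely generated infinite fields, e.g. via the function field / number field versions, cf. \cite{V}, \cite{PRg}) a Zariski dense set of $K$-points of this parameter variety gives tori whose splitting field has Galois group containing $W$. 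This disposes of the purely group-theoretic input.

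The second and main step is to promote "there exists a generic $K$-torus" to "$\Gamma$ contains a regular element lying in a generic $K$-torus of infinite order". Here I would argue as follows. Since $\Gamma$ is Zariski dense in $G$, and the condition of being regular semisimple is Zariski open and nonempty, $\Gamma$ contains regular semisimple elements; for such $g$, $T_g = Z_G(g)^0$ is a maximal $K$-torus and $g\in T_g(K)$. The genericity of $T_g$ is governed by the conjugacy class of the Galois image in $\mathrm{Aut}(\Phi)$, which in turn is controlled by where the point $[T_g]$ lands in the parameter variety $\mathcal{T} = G/N_G(T)$. The key point is that the map $\Gamma \to \mathcal{T}(K)$, $g\mapsto [T_g]$ (defined on regular semisimple elements), has Zariski dense image — this follows from Zariski density of $\Gamma$ in $G$ together with the fact that $g\mapsto [Z_G(g)^0]$ is a dominant morphism from the regular semisimple locus of $G$ to $\mathcal{T}$. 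Combining this with the Hilbert-irreducibility statement of the first step — that the generic locus of $\mathcal{T}(K)$ is "large" (contains a set that is not thin, hence meets every Zariski dense subset arising this way) — one concludes that some $g\in\Gamma$ has $[T_g]$ in the generic locus, i.e. $T_g$ is a generic $K$-torus.

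The third step is to arrange infinite order. If the $g$ produced above has finite order, replace $G$-conjugation considerations by the following: because $K$ is infinite and finitely generated, and $\Gamma$ is Zariski dense, $\Gamma$ cannot be a torsion group (a finitely generated linear torsion group is finite by Burnside/Schur, and a finite subgroup cannot be Zariski dense in the positive-dimensional group $G$); more precisely, inside the generic torus $T_g$ one can find, still within $\Gamma$ or within a small deformation, an element of infinite order — or one runs the density argument of step two directly on the open subset of the regular semisimple locus consisting of elements of infinite order, which is still Zariski dense since its complement (torsion elements) is not Zariski dense in $G$. I expect the genuine obstacle to be step two: making precise and rigorous the passage from Zariski density of $\Gamma$ to hitting the *thin-complement* (Hilbert-irreducible) generic locus in the torus parameter space — this is exactly where the hypothesis that $K$ is finitely generated is essential, and where one must invoke the refined specialization/Hilbert-irreducibility results over such fields as in \cite{PRr} and \cite[\S 9]{PRg}; the torus-construction of step one and the torsion-removal of step three are comparatively routine.
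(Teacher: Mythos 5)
The paper offers no proof of this statement: it is quoted verbatim from Prasad--Rapinchuk (\cite{PRg}, Theorem~9.6) and used as a black box, so your proposal has to be measured against their argument in \cite{PRr}, \cite{PRwc} and \cite[\S 9]{PRg}. Your step one (a generic maximal $K$-torus exists, by the $W$-covering of the variety of maximal tori $\mathcal{T}=G/N_G(T)$ together with Hilbertianity of finitely generated infinite fields) and step three (torsion in $G(K)$ has bounded order when $K$ is finitely generated, so the torsion elements lie in a proper closed subset) are sound. The genuine gap is exactly where you suspect it, in step two, and it is not a matter of missing details: the inference ``the generic locus is the complement of a thin set, hence meets every Zariski dense subset arising this way'' is false. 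A thin subset of $\mathcal{T}(K)$ can perfectly well be Zariski dense (already the squares in $\mathbb{A}^1(\mathbb{Q})$ are), so Zariski density of the image of $\gamma\mapsto [T_\gamma]$ gives no control whatsoever on whether that image avoids the exceptional thin set produced by Hilbert irreducibility. There is no ``non-thin meets Zariski dense'' principle, and this is precisely why the genericity of a single torus (your step one) does not propagate to elements of $\Gamma$.

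Prasad and Rapinchuk circumvent this by never invoking Hilbert irreducibility over $K$. They choose finitely many conjugacy classes of $W$ with the property that any subgroup of $\mathrm{Aut}(\Phi)$ meeting all of them contains $W$, and for each one they find a discrete valuation $v_i$ of $K$ with locally compact completion and a maximal $K_{v_i}$-torus whose local Galois image contains an element of that class. The condition ``$T_g$ is $G(K_{v_i})$-conjugate to such a torus'' cuts out a $v_i$-adically \emph{open} subset $U_i\subset G(K_{v_i})$, and the correct substitute for your non-thinness claim is the strong-approximation-type theorem (Weisfeiler, Nori, Pink; see the references in \cite{PRg}) that the closure of the finitely generated Zariski dense subgroup $\Gamma$ in $\prod_i G(K_{v_i})$ is open, whence $\Gamma\cap\bigcap_i U_i\neq\emptyset$; infinite order is arranged by one further open local condition. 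Without replacing your global Hilbert-irreducibility step by these local conditions plus $v$-adic openness of $\overline{\Gamma}$, the argument does not close.
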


\section{Main theorems and a preliminary reduction}
Let $K$ be a finitely generated, infinite field, and 
$G$ be a connected, absolutely almost simple algebraic group
defined over  $K$. Fix an algebraic closure $\bar{K}$ of
$K$. By the classfication results of Killing, Cartan and Chevalley, 
the class of the group $G$ over  $\bar{K}$ upto isogeny, will be
called the type of $G$. 

Let $G_1, ~G_2$ be connected, 
 absolutely almost simple algebraic groups
defined over $K$. 
Call the pair $(G_1, G_2)$ to be {\em
  Weyl iso-trivial} if one of the following conditions hold: 

\begin{itemize}
\item Both $G_1$ and $G_2$ are of type $\bB_n$ or $\bC_n$ ($n\geq 2$). 

\item $n\geq 5$ is odd, and one of $G_1$ or $G_2$ is of type $\bB_n/\bC_n$  
and the other group is of type $\bD_n$. 

\item  One of  $G_1$ or $G_2$ are of  type $\bA_2$ and the other is of
  type
 $\bG_2$.
\end{itemize}

In these cases, we have the following
  isomorphisms: 
\begin{equation}\label{weylisotrivial}
\begin{split}
W(\bB_n)&\simeq W(\bC_n),\\
W(\bB_n)& \simeq W(\bD_n)\times \Z/2\Z, \quad n ~\mbox{odd}, ~n\geq 3,\\
W(\bG_2)& \simeq W(\bA_2)\times \Z/2\Z. 
\end{split}
\end{equation}
The first of the above equations follows from the fact that the root
systems of $\bB_n$ and $\bC_n$ are dual to each other. For a proof of
the second equation, see Corollary \ref{product}. The third equation
follows from the fact that the Weyl group of $\bG_2$ can be identified
with the dihedral group $D_6$.

\begin{defn} Two fields $L, ~M$ contained inside $\bar{K}$ are
  {\em commensurable} if both $L$ and $M$ are of finite degree over 
the intersection $L\cap M$. 
\end{defn} 

Equivalently, $L$ and $M$ are not commensurable if the compositum $LM$
inside $\bar{K}$ is of infinite degree over either $L$ or $M$. 

Our main theorem is the following: 
\begin{theorem}\label{maintheorem}
Let $G_1, ~G_2$ be connected,  
 absolutely almost simple algebraic groups
defined over a finitely generated infinite field $K$. Assume that 
they do not form a Weyl iso-trivial pair
as defined above. 

Suppose $\Gamma_i\subset G_i(K), ~i=1,2$ are
Zariski dense subgroups. For $i=1, 2$, 
let $\sF_i=\sF(\Gamma_i, K)$ be the subfield of
$\bar{K}$ given by the compositum of the fields $K_{\gamma}$ as
$\gamma \in \Gamma_i$ varies over the set of generic $K$-regular
elements in $\Gamma_i$. 

Suppose the fields  $\sF_1$ and  $\sF_2$ are commensurable. Then $G_1$
and $G_2$ are of the same Killing-Cartan type over $\bar{K}$. 
\end{theorem}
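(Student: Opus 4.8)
The plan is to exploit Proposition~\ref{Kgad}, which identifies $K_\gamma$ with the splitting field $K(\gamma, Ad)$ of the adjoint characteristic polynomial, and to show that a generic $K$-regular element $\gamma \in \Gamma_i$ forces $K_\gamma$ to be a ``large'' field whose Galois group over $K$ surjects onto the Weyl group $W(G_i)$. Since $\Gamma_i$ is Zariski dense, Theorem~\ref{prthm} guarantees such generic elements exist, so $\sF_i$ is the compositum of these $K_\gamma$. The first step is to extract an invariant of the Weyl group $W_i := W(G_i)$ from the field $\sF_i$. The natural candidate is the degree of $K_\gamma/K$, which for a generic element is a multiple of $|W_i|$; more robustly, the Galois group $\Gal(K_\gamma/K)$ contains a copy of $W_i$, and over a suitable finite extension of $K$ one can arrange $\Gal(K_\gamma/K)$ to be exactly $W_i$ (a truly generic torus). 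Commensurability of $\sF_1$ and $\sF_2$ should then translate into a statement forcing $|W_1|$ and a finer set of numerical/group-theoretic invariants of $W_1$ and $W_2$ to coincide.

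The key step is to make ``$\sF_1$ and $\sF_2$ commensurable'' yield ``$W_1 \cong W_2$'' (or at least matching orders and reflection data), and then to invoke the classification of which pairs of Killing--Cartan types share a Weyl group. Concretely: pick a generic $\gamma_1 \in \Gamma_1$ with $\Gal(K_{\gamma_1}/K') \cong W_1$ for some finite extension $K'/K$; then $K_{\gamma_1} \subset \sF_1$, hence $K_{\gamma_1} \subset \sF_2$ up to finite index by commensurability. On the other side, $\sF_2$ is built from fields $K_\gamma$ with $\Gal(K_\gamma/K)$ controlled by $\mathrm{Aut}(\Phi(G_2)) \supset W_2$. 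Comparing the Galois-theoretic complexity of $\sF_1$ and $\sF_2$ — for instance by looking at the maximal order of an element, the structure of Sylow subgroups, or the minimal faithful permutation degree realized — one concludes that $W_1$ and $W_2$ cannot be ``too different''. The precise extraction is: if $W_1 \not\cong W_2$ and they are not linked by one of the accidental isomorphisms in \eqref{weylisotrivial}, then one can find a generic element on one side whose splitting field is of infinite degree over the compositum of all splitting fields available on the other side, contradicting commensurability. This is where the Weyl iso-triviality hypothesis is exactly what must be excluded.

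The main obstacle, I expect, is the following subtlety: $\sF_i$ is a compositum of infinitely many fields $K_\gamma$, so a priori $\sF_i$ could be much larger than any single $K_\gamma$, and commensurability is a condition on the whole compositum, not on individual pieces. One must therefore show that commensurability of the composita still propagates back to a comparison of the ``atoms'' $W_1$ versus $W_2$ — essentially that the growth rate of $[\,\text{compositum of $n$ generic splitting fields}\,:\,K\,]$ is governed by $W_i$ in a way that is insensitive to how one assembles the pieces. A clean way to do this is to find a single well-chosen generic element $\gamma_2 \in \Gamma_2$ (after passing to a finite extension of $K$, which does not affect commensurability classes) whose splitting field is linearly disjoint, over a common finite base, from every generic splitting field coming from $\Gamma_1$ unless $W_1$ and $W_2$ are isomorphic; this reduces the infinite compositum question to a disjointness statement for two fields. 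Verifying this linear disjointness — i.e. that the Galois closure forces $\Gal$ of the compositum to be (close to) $W_1 \times W_2$ when $W_1 \not\cong W_2$ — and then reading off the Killing--Cartan type from the isomorphism class of the Weyl group via the classification, constitutes the technical heart of the argument.
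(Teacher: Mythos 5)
Your opening reduction is essentially the paper's: use Theorem~\ref{prthm} and Proposition~\ref{Kgad} to find, after passing to a finite extension $L$ over which the groups split, a generic element $\gamma_1\in\Gamma_1$ with $\Gal(L_{T_{\gamma_1}}/L)\simeq W_1$, so that commensurability forces $W_1$ to be a quotient of $\Gal(L\sF_2/L)$. But from that point on there is a genuine gap. The obstruction you need is \emph{not} ``$W_1\not\cong W_2$'': since $\sF_2$ is an infinite compositum, $\Gal(L\sF_2/L)$ is (an inverse limit of) composita of groups each isomorphic to a \emph{normal subgroup} of $W_2$, and such composita can have quotients that are neither $W_2$ nor quotients of $W_2$ — this is exactly how the Weyl iso-trivial pairs arise, e.g.\ $W(\bB_n)\simeq W(\bD_n)\times\Delta$ for $n$ odd, with $\Delta$ a normal subgroup of $W(\bD_n)$. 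So the correct statement to prove is that $W_1$ is not a quotient of any compositum of normal subgroups of $W_2$ (or with the roles of $1,2$ swapped); this is the paper's Theorem~\ref{nonquotient}, and it is much stronger than non-isomorphism of the Weyl groups.

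Your proposed resolution of the ``infinite compositum'' difficulty — find one generic $\gamma_2\in\Gamma_2$ whose splitting field is linearly disjoint from every generic splitting field on the $\Gamma_1$ side — points in the wrong direction and would not suffice even if established: the danger is that $K_{\gamma_1}$ embeds in a finite extension of $\sF_2$, i.e.\ that $W_1$ is realized as a quotient of a group glued from many normal subgroups of $W_2$, and no disjointness statement between two individual fields rules this out. Your suggested invariants (element orders, Sylow structure) are in the right spirit — the paper does use exponents and simple Jordan--H\"older components to dispatch many cases — but these do not settle the hard pairs ($\bA_{n-1}$ vs $\bB_n/\bD_n$; $\bD_n$ vs $\bB_n$ with $n$ even; $\bB_4$ vs $\bF_4$), which require the classification of normal subgroups of $W(\bD_n)$ and, in the last two cases, an argument tracking the image of the abelian normal part $V_e$ together with a cohomological splitting lemma. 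None of this machinery, nor the correct target statement it is meant to prove, appears in your proposal.
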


\subsection{Commensurable arithmetic lattices} In this subsection, we
relate Theorem \ref{maintheorem} to commensurability of arithmetic lattices. 
Let $K$ be a number field, and $\A_f$ be the ring of finite adeles of
$K$. Let $G$
be a semisimple algebraic group defined over
$K$, which for simplicity will be assumed to be of adjoint type. 
 
Let $F$ be a field with an embedding of $K$ into $F$. 
A Zariski dense subgroup $\Gamma\subset G(F)$ will be said to 
a $(G,K)$-arithmetic group, if it is commensurable with the image in
$G(F)$ of a  group of the form
$\Gamma_M:=G(K)\cap M$, where $M$ is a compact open subgroup of the
group $G(\A_f)$. 

A simple consequence of Theorem \ref{maintheorem} is the following
theorem: 
\begin{theorem}
Let $G_1, ~G_2$ be connected,  split, absolutely simple 
algebraic groups defined over a number field $K$. 
Assume that $G_1$ and $G_2$ do not form a Weyl iso-trivial pair. 

Suppose $\Gamma_i\subset G_i(K), ~i=1,2$ are
arithmetic subgroups. Assume that 
the fields  $\sF_1$ and  $\sF_2$ are commensurable. 

Then the lattices $\Gamma_1$ and $\Gamma_2$ are commensurable, i.e.,
there eixsts an isomorphism $\phi: G_1\to G_2$ defined over
$K$, such that $\phi(\Gamma_1)$ is commensurable with $\Gamma_2$. 
\end{theorem}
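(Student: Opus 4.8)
The plan is to reduce the statement to Theorem~\ref{maintheorem} and then invoke two standard facts: that a split adjoint semisimple group over $K$ is determined up to $K$-isomorphism by its type, and that any two arithmetic subgroups of a fixed $G(K)$, in the sense defined above, are automatically mutually commensurable.

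First I would check that the hypotheses of Theorem~\ref{maintheorem} hold. A number field is finitely generated and infinite. Each $\Gamma_i$ is commensurable with a group $G_i(K)\cap M_i$ for some compact open $M_i\subset G_i(\A_f)$; by Borel--Harish-Chandra reduction theory such a group is a lattice in $\prod_{v\mid\infty}G_i(K_v)$, and since $G_i$ is $K$-split and semisimple this real Lie group has no compact factors, so the Borel density theorem shows that $\Gamma_i$ is Zariski dense in $G_i$. Hence Theorem~\ref{prthm} guarantees generic $K$-regular elements of infinite order in $\Gamma_i$, so $\sF_i=\sF(\Gamma_i,K)$ is defined; and since by hypothesis $(G_1,G_2)$ is not a Weyl iso-trivial pair and $\sF_1,\sF_2$ are commensurable, Theorem~\ref{maintheorem} applies and gives that $G_1$ and $G_2$ have the same Killing--Cartan type over $\bar K$.

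Next I would produce the isomorphism. Since $G_1$ and $G_2$ are split over $K$, absolutely simple, of adjoint type (carrying over the convention fixed at the start of this subsection), and of the same type over $\bar K$, the existence and isomorphism theorems for split semisimple groups yield an isomorphism $\phi\colon G_1\to G_2$ defined over $K$: the split adjoint group of a given type is unique up to $K$-isomorphism, its root datum being determined by the Dynkin diagram. (Without the adjointness convention, the same argument gives a $K$-isomorphism of the adjoint quotients $G_1^{\mathrm{ad}}\to G_2^{\mathrm{ad}}$, and the remainder of the argument then yields commensurability of the images of $\Gamma_1,\Gamma_2$ in those quotients.) The map $\phi$ induces an isomorphism of topological groups $G_1(\A_f)\to G_2(\A_f)$, compatible with $\phi\colon G_1(K)\to G_2(K)$ under the diagonal embeddings; in particular $\phi(M_1)$ is compact open in $G_2(\A_f)$ and $\phi(G_1(K)\cap M_1)=G_2(K)\cap\phi(M_1)$. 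Setting $M:=\phi(M_1)\cap M_2$, again compact open, the group $G_2(K)\cap M$ has finite index in both $G_2(K)\cap\phi(M_1)$ and $G_2(K)\cap M_2$, so those two groups are commensurable; since $\phi$ preserves commensurability we conclude
\[
\phi(\Gamma_1)\ \sim\ \phi\bigl(G_1(K)\cap M_1\bigr)\ =\ G_2(K)\cap\phi(M_1)\ \sim\ G_2(K)\cap M_2\ \sim\ \Gamma_2 ,
\]
which is the claim.

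I expect no real obstacle beyond Theorem~\ref{maintheorem} itself. The only point that must be stated with care is that the present definition of ``arithmetic subgroup'' --- commensurability with $G(K)\cap M$ for \emph{some} compact open $M\subset G(\A_f)$ --- forces all arithmetic subgroups of a fixed $G(K)$ to be mutually commensurable; this is precisely what converts the essentially combinatorial conclusion of Theorem~\ref{maintheorem} (agreement of the Weyl-group and Killing--Cartan type data) into an honest commensurability of lattices.
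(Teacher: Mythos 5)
Your proposal is correct and follows essentially the same route as the paper: apply Theorem~\ref{maintheorem} to get agreement of the Killing--Cartan type, use splitness and adjointness to produce a $K$-isomorphism $\phi\colon G_1\to G_2$, and observe that $\phi$ preserves the adelic structure so that the lattices become commensurable. You simply fill in details the paper leaves implicit (Zariski density of arithmetic subgroups via Borel density, and the intersection-of-compact-open-subgroups argument for commensurability).
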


\begin{proof} By Theorem \ref{maintheorem}, 
the groups $G_1$ and $G_2$ are of the same Killing-Cartan type. 
Since they are  adjoint and split, there is an isomorphism $\phi:
G_1\to G_2$ defined over $K$. This preserves the group of finite adele
points and hence the lattices $\phi(\Gamma_1)$ and $\Gamma_2$ are
commensurable. 
\end{proof}

\begin{remark} It would be interesting to extend this theorem to 
not necessarily split groups and to arbitrary $S$-arithmetic
  lattices. 
\end{remark}

\subsection{Compositum of groups} 
The proof of Theorem \ref{maintheorem} is achieved by an argument
involving Galois theory and the structure of Weyl groups. 

\begin{defn}
A finite group $U$ is a {\em compositum} of the finite groups $U_1, \cdots,
U_r$ if there exists surjective maps $p_i: U\to U_i$ such that the
natural induced map   
$p: U\to \prod_{i=1}^rU_i$ is injective. 

 When all the $U_i$ are
isomorphic  to a given group say $U'$, we refer to $U$ as a compositum (or
composite) of $U'$. 
\end{defn}
The example we have in mind is given by the Galois group of a
compositum of Galois extensions: 

\begin{example}
Suppose $K$ is a field and  $L_1, \cdots, L_r$
are finite Galois extensions of $K$ contained inside 
an algebraic closure $\bar{K}$ of $K$. 
Let $U_i\simeq G(L_i/K)$ and $U\simeq G(L_1\cdots L_r/K)$. Then $U$ is
a compositum of the groups $U_i$.  
\end{example}

The following proposition says that the field theoretic example is not
restrictive: 
\begin{proposition}
Let $U$ be a compositum of the finite groups $U_1, \cdots,
U_r$, with respect to the maps $p_i: U\to U_i$. 
 Then  there exists a field $K$ and finite Galois 
extensions $L_1, \cdots, L_r$
of $K$ contained inside an algebraic closure $\bar{K}$ of $K$ 
such that $U_i\simeq G(L_i/K)$ and $U\simeq G(L_1\cdots L_r/K)$. 
\end{proposition}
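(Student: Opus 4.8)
The plan is to realize $U$ concretely as a Galois group and then take the $L_i$ to be the fixed fields of the normal subgroups $H_i:=\ker p_i$. First I would invoke the standard construction of a field with prescribed finite Galois group: fix any field $F$ (say $F=\Q$) and let $E=F(x_u:u\in U)$ be the field of rational functions in $|U|$ indeterminates indexed by the elements of $U$. Let $U$ act on $E$ by $F$-algebra automorphisms via the left regular action on indeterminates, $v\cdot x_u=x_{vu}$. This action is faithful (if $v\neq 1$ then $v$ sends $x_1$ to $x_v\neq x_1$), so by Artin's theorem $E/E^U$ is a finite Galois extension with Galois group $U$. Put $K:=E^U$ and fix an algebraic closure $\bar{K}$ of $K$ containing $E$.

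Next, for each $i$ set $H_i=\ker p_i$ and $L_i:=E^{H_i}\subseteq \bar{K}$. Since $p_i$ is a group homomorphism, $H_i$ is normal in $U=G(E/K)$, so by the fundamental theorem of Galois theory $L_i/K$ is a (finite) Galois extension with $G(L_i/K)\cong U/H_i$. Because $p_i$ is surjective, $U/H_i\cong U_i$, giving $U_i\simeq G(L_i/K)$ as desired.

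It remains to compute the compositum. Under the Galois correspondence for the extension $E/K$, the compositum of the intermediate fields $L_1,\dots,L_r$ corresponds to the intersection of the associated subgroups; that is, $L_1\cdots L_r=E^{H_1\cap\cdots\cap H_r}$. The hypothesis that the induced map $p=(p_1,\dots,p_r):U\to\prod_{i=1}^r U_i$ is injective says precisely that $\bigcap_{i=1}^r H_i=\{1\}$. Hence $L_1\cdots L_r=E^{\{1\}}=E$, and therefore $G(L_1\cdots L_r/K)=G(E/K)=U$, which completes the argument.

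The only step with any content beyond bookkeeping is the identity $L_1\cdots L_r=E^{\bigcap_i H_i}$: this is the standard fact that, under the Galois correspondence inside a single finite Galois extension, the join of subfields matches the intersection of the corresponding subgroups. Everything else is a routine dictionary between the surjectivity of each $p_i$ (yielding $U/H_i\cong U_i$), the homomorphism property of $p_i$ (yielding normality of $H_i$, hence that $L_i/K$ is Galois), and the injectivity of $p$ (yielding triviality of $\bigcap_i H_i$, hence $L_1\cdots L_r=E$). There is therefore no serious obstacle; the proposition is essentially a reformulation of Artin's theorem together with the lattice anti-isomorphism of Galois theory.
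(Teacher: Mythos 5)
Your proof is correct and follows essentially the same route as the paper: realize the group as a Galois group, set $L_i$ equal to the fixed field of $\ker p_i$, and use the Galois correspondence together with the injectivity of $p$ to identify the compositum. The only cosmetic difference is that you apply Artin's theorem to $U$ directly, while the paper first realizes $U_1\times\cdots\times U_r$ as a Galois group over some $K_0$ and then takes $K=L^U$; both arguments are equivalent.
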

\begin{proof} There exists a field $K_0$ and a Galois extension $L$ of
  $K_0$ with $G(L/K_0)\simeq U_1\times U_2\times \cdots \times
  U_r$. Let $K=L^U$, the field of $U$-invariants of $L$. For $i=1,
  \cdots, r$, let $R_i={\rm Ker}(p_i)$, and let $L_i=L^{R_i}$. Then
  $L_i$ is a Galois extension of $K$, with $G(L_i/K)\simeq U/R_i\simeq
  U_i$. 
\end{proof}
In this paper, we will mainly use the field theoretic language while dealing
with compositums of groups.

\subsection{A reduction}
The following (lack of) relationship between Weyl groups is the key
group theoretic statement required for the proof of Theorem
\ref{maintheorem}: 

\begin{theorem}\label{nonquotient}
Let $G_1, ~G_2$ be connected,  
 absolutely almost simple algebraic groups
defined over a finitely generated, infinite field $K$. 
Assume that $G_1$ and $G_2$ are
not isogenous over $\bar{K}$, and are not Weyl iso-trivial. 
Let $W_1$ (resp. $W_2$) be the Weyl group of $G_1$ (resp. $G_2$)
with respect to some maximal torus in $G_1$ (resp. $G_2$). 

Then, either $W_1$ (or
$W_2$) is not a    quotient of a compositum of  
normal subgroups of  $W_2$ (resp. $W_1$).
\end{theorem}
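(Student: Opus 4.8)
The plan is to reduce the statement to a finite check over Killing--Cartan types and then handle each pair of non-isogenous, non-Weyl-iso-trivial types by an invariant that obstructs $W_1$ from being a quotient of a compositum of normal subgroups of $W_2$ (or vice versa). First I would record the relevant structural facts about Weyl groups: for the classical series $W(\bA_{n-1})\cong S_n$, $W(\bB_n)=W(\bC_n)\cong (\Z/2\Z)^n\rtimes S_n$, $W(\bD_n)$ is the index-two subgroup of $W(\bB_n)$ consisting of even sign changes, together with the orders of the exceptional Weyl groups $W(\bG_2), W(\bF_4), W(\bE_6), W(\bE_7), W(\bE_8)$. The key observation is that if $U$ is a compositum of normal subgroups $N_1,\dots,N_r \trianglelefteq W_2$, and $W_1$ is a quotient of $U$, then in particular the composition factors of $W_1$ all appear among the composition factors of $W_2$ (counted without multiplicity), since each $N_i$ is a subgroup of $W_2$ and $U$ embeds in $\prod N_i$. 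So a first, cheap obstruction is: \emph{the set of (nonabelian) composition factors of $W_1$ must be contained in that of $W_2$.} The nonabelian composition factors are: $A_n = S_n/A_n$-part for $n\geq 5$ from the classical groups of rank $n$; $\mathrm{PSp}_4(3)$ (or the relevant simple group) for $\bE_6$; the factors of $W(\bE_7), W(\bE_8)$; and nothing nonabelian for small ranks and for $\bG_2, \bF_4$. This already separates most pairs of distinct types outright, and in particular separates any two classical groups of different ranks $\geq 5$, and separates $\bE_6, \bE_7, \bE_8$ from everything of different type and from each other.

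Next I would dispose of the remaining pairs, which are those where the nonabelian composition factors happen to coincide but the types differ and the pair is not Weyl-iso-trivial: roughly, (i) classical groups of the same rank $n$ in the set $\{\bB_n/\bC_n, \bD_n\}$ when $n$ is even (or $n=3,4$), which are not Weyl-iso-trivial by hypothesis; (ii) pairs among low ranks where $W$ is a $2$-group or has only $A_n$ for small $n$ as a factor, e.g. $\bA_1$ vs.\ $\bA_1$-like small cases, $\bA_3$ vs.\ $\bB_3$, $\bA_1$ vs.\ anything small, $\bG_2$ vs.\ $\bB_2/\bC_2$, etc.; and (iii) the genuinely delicate near-coincidences such as $\bF_4$ versus $\bB_4/\bC_4/\bD_4$ and $\bG_2$ versus $\bA_2$ (the latter only being iso-trivial in the excepted direction $W(\bG_2)\cong W(\bA_2)\times\Z/2\Z$, so the reverse question — is $W(\bA_2)=S_3$ a quotient of a compositum of normal subgroups of $W(\bG_2)=D_6$? — still needs checking, and it fails for order reasons since the proper normal subgroups of $D_6$ are cyclic). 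For each such pair I would use a sharper numerical invariant: compare $|W_1|$ with the product of orders of the proper normal subgroups of $W_2$ that can occur, or use the fact that a surjection $U\twoheadrightarrow W_1$ with $U\hookrightarrow \prod N_i$ forces $|W_1|$ to divide $\prod |N_i|$ and each $N_i \leq W_2$; combined with the classification of normal subgroups of $W_2$ (for $\bD_n$: itself, the center when $n$ even, and the simple-or-almost-simple pieces; for $\bB_n$: the sign-change subgroup $(\Z/2)^n$, $W(\bD_n)$, and a few others) this is a finite arithmetic check.

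The crux, and the step I expect to be the main obstacle, is case (i): $G_1$ of type $\bB_n$ (or $\bC_n$) and $G_2$ of type $\bD_n$ with $n$ even, $n\geq 4$ (plus the small anomalies $n=3,4$). Here $W(\bD_n)\trianglelefteq W(\bB_n)$ with index $2$, so $W(\bD_n)$ is \emph{literally} a normal subgroup of $W(\bB_n)$, hence trivially $W(\bD_n)$ is a quotient of a compositum (of length one) of normal subgroups of $W(\bB_n)$; so the only hope is to show the \emph{other} direction fails, i.e.\ $W(\bB_n)$ is not a quotient of any compositum of normal subgroups of $W(\bD_n)$. The reason it should fail: the normal subgroups of $W(\bD_n)$ for $n\geq 5$ are essentially $W(\bD_n)$ itself, its center (trivial or $\Z/2\Z$ depending on parity), and the subgroup of sign changes $(\Z/2\Z)^{n-1}$ inside it, together with their intersections; a compositum built from copies of these embeds into a product whose "symmetric part" is still governed by a single $S_n$-action, and one shows by looking at the abelianization (or at the largest elementary abelian $2$-quotient, or at the $2$-rank) that $W(\bB_n)$, whose abelianization is $(\Z/2\Z)^2$ for $n\geq 3$ while the relevant quotient structure of $W(\bD_n)$ cannot produce the extra sign character, cannot arise. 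Concretely I expect the clean argument to be: pass to abelianizations and to the permutation quotient $S_n$ simultaneously, note that any quotient of a compositum of normal subgroups of $W(\bD_n)$ that surjects onto $S_n$ ($n\geq 5$) must — because $A_n$ is the unique nonabelian composition factor and it sits inside the $W(\bD_n)$-factors in only the standard way — already be a subquotient of $W(\bD_n)$ itself in its "permutation action" part, and $W(\bB_n)$ is strictly bigger in the sign part; the even/odd parity of $n$ enters precisely in controlling the center and hence in making $n$ even the bad case that is excluded by hypothesis and $n$ odd the case covered by the iso-triviality $W(\bB_n)\cong W(\bD_n)\times\Z/2\Z$. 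The small ranks $n=2,3,4$ and the sporadic coincidences (e.g.\ $|W(\bF_4)|=1152=|W(\bB_3)|\cdot 2^?$-type collisions, and $\bA_3\cong\bD_3$ identifications) I would simply enumerate by hand using the explicit lists of normal subgroups. With case (i) resolved, the theorem follows by assembling the per-type verifications, and then Theorem~\ref{maintheorem} is deduced from Theorem~\ref{nonquotient} together with Proposition~\ref{Kgad}, Theorem~\ref{prthm}, and the Galois-theoretic description of $\sF_i$ as a compositum of the fields $K_\gamma$, each of which has Galois group containing $W_i$ as a subgroup (normal up to the action of $\Aut(\Phi)$).
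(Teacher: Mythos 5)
Your overall architecture matches the paper's (composition factors first, then finer invariants, then the two hard cases $\bB_n$ vs $\bD_n$ and $\bB_4$ vs $\bF_4$), but two of your proposed obstructions do not actually work, and they are precisely the ones carrying the weight of the proof. First, the numerical invariant you propose for the middle cases --- that $|W_1|$ must divide $\prod_i |N_i|$ for $U\hookrightarrow\prod_i N_i$ --- is vacuous: the number $r$ of factors in a compositum is unbounded, so $\prod_i|N_i|$ can be made as large as you like and no divisibility constraint survives. The invariant that does survive passage to arbitrary composita is the \emph{exponent}, since $e(U)$ divides $e(W_2)$ no matter how many factors there are; this is what the paper uses (via the computation that $e(W(\bD_n))=e(S_n)$ exactly when $n$ is a power of $2$, and $e(W(\bB_4))=e(W(\bF_4))=24$ versus $e(W(\bD_4))=12$, etc.). Without replacing your order count by the exponent (or some other compositum-stable invariant such as the derived length, which the paper also uses for $\bD_4$ versus rank $3$), the rank-$2$/rank-$3$/rank-$4$ separations in your plan do not go through.

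Second, in the crux case ($G_1$ of type $\bB_n$, $G_2$ of type $\bD_n$, $n$ even), the abelianization argument you sketch fails for the same reason: a compositum of normal subgroups of $W(\bD_n)$ can contain arbitrarily many factors isomorphic to $\Delta\simeq\Z/2\Z$, so its maximal elementary abelian $2$-quotient can be $(\Z/2\Z)^m$ for any $m$, and $(\Z/2\Z)^2=W(\bB_n)^{\mathrm{ab}}$ is certainly a quotient of that. The real obstruction is not in the abelianization but in how the sign part interacts with the $S_n$-action: one must show that under any surjection $\theta:G(L/K)\to W(\bB_n)$ the image of the abelian part $G(L/E)$ is forced into $V_e$ (this uses the classification of $H$-invariant subspaces of $(\Z/2\Z)^n$ and a semisimplicity argument for the $G(E/K)$-module $G(L/E)$ modulo the diagonal), and then one needs a substitute for a splitting of $G(L/K)\to G(E/K)$ --- the paper constructs a cover $Q$ of $G(E/K)$ by iterated central $\Z/2\Z$-extensions that splits the sequence --- before concluding that the image cannot cover $V/V_e$. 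None of this is visible in your sketch, and "enumerate by hand" likewise cannot dispose of $\bF_4$ versus $\bB_4$, where the paper needs the semidirect decomposition $W(\bF_4)\simeq A\rtimes(S_3\times S_3)$ and the irreducibility of $A/\Delta$. (A small further slip: the proper normal subgroups of $W(\bG_2)$, the dihedral group of order $12$, are not all cyclic --- it has two normal subgroups isomorphic to $S_3$ --- though this case is excluded by the Weyl iso-triviality hypothesis anyway.)
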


We now deduce Theorem \ref{maintheorem} from Theorem \ref{nonquotient}. 
\begin{proposition}\label{surjcrit}
Suppose $W_1$ is not a quotient of a compositum of normal subgroups of
$W_2$. Then $\sF_1\sF_2$ is of infinite degree over $\sF_2$.

In particular, Theorem \ref{maintheorem} follows from the validity 
of Theorem \ref{nonquotient}.
\end{proposition}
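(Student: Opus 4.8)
The plan is to work entirely in the field-theoretic language set up in the excerpt. Recall that $\sF_i$ is the compositum, inside $\bar K$, of all the fields $K_\gamma$ as $\gamma$ runs over the $K$-generic regular elements of $\Gamma_i$; by Proposition~\ref{Kgad} this is the same as the compositum of the splitting fields $K(\gamma,Ad)$. Each such $K_\gamma$ is Galois over $K$ with Galois group $G(K_\gamma/K)$ isomorphic, via $\theta_{T_\gamma}$, to a subgroup of $\mathrm{Aut}(\Phi(G_i,T_\gamma))$ containing $W_i$; in fact, since the ambient group of automorphisms of a (reduced) root system is $W_i$ extended by the symmetry group of the Dynkin diagram, $G(K_\gamma/K)$ sits between $W_i$ and an explicit finite overgroup of $W_i$. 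The existence of at least one $K$-generic element is guaranteed by Theorem~\ref{prthm}.

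First I would argue by contraposition: assume $\sF_1\sF_2$ is of finite degree over $\sF_2$, and deduce that $W_1$ is a quotient of a compositum of normal subgroups of $W_2$. Since $[\sF_1\sF_2:\sF_2]<\infty$, the field $\sF_1$ is contained in a finite extension of $\sF_2$; more precisely $\sF_1\sF_2/\sF_2$ being finite means $\sF_1\cap\sF_2$ has finite index character, but the cleanest route is: pick a $K$-generic $\gamma_1\in\Gamma_1$, so $K_{\gamma_1}\subset\sF_1$, hence $K_{\gamma_1}\sF_2/\sF_2$ is finite, i.e. $K_{\gamma_1}$ is contained in a \emph{finite} subextension $M$ of $\sF_2/K$. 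Any such finite $M$ is contained in a compositum $K_{\gamma^{(1)}}\cdots K_{\gamma^{(r)}}$ of finitely many of the generating fields for $\sF_2$, with each $\gamma^{(j)}\in\Gamma_2$ $K$-generic. Passing to Galois closures over $K$ (each $K_{\gamma^{(j)}}$ is already Galois over $K$), we get that $K_{\gamma_1}$ sits inside a finite Galois extension $L=K_{\gamma^{(1)}}\cdots K_{\gamma^{(r)}}$ of $K$ whose Galois group $U:=G(L/K)$ is, by the Example in the excerpt, a compositum of the groups $U_j:=G(K_{\gamma^{(j)}}/K)$.

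Next I would extract the group-theoretic conclusion. Since $K_{\gamma_1}\subset L$ is Galois over $K$, $G(K_{\gamma_1}/K)$ is a quotient of $U$. Now $G(K_{\gamma_1}/K)$ contains a copy of $W_1$ (genericity of $\gamma_1$), so $W_1$ is a subquotient of $U$ — but I want it as a quotient of a compositum of \emph{normal subgroups of $W_2$}, so more care is needed. Each $U_j=G(K_{\gamma^{(j)}}/K)$ contains $W_2$ as a subgroup (genericity of $\gamma^{(j)}$), and $W_2$ is normal in $U_j$ of index dividing the order of the diagram automorphism group of $G_2$ (this is where one uses that $\theta_{T}(G_K)$ lands in $\mathrm{Aut}(\Phi)$ and that $W_2\trianglelefteq\mathrm{Aut}(\Phi(G_2,\cdot))$). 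The subgroup $V:=p^{-1}\big(\prod_j W_2\big)\le U$, where $p:U\hookrightarrow\prod_j U_j$ is the compositum injection, is then a compositum of copies of the normal subgroup $W_2\trianglelefteq U_j$, hence $V$ is itself (isomorphic to) a compositum of copies of $W_2$; moreover $V$ is normal in $U$ of finite index. One then checks that $V$ surjects onto a finite-index normal subgroup of $G(K_{\gamma_1}/K)$ that still contains (a conjugate of) $W_1$ — here one uses that $W_1$ is the \emph{derived-type} part: since $G_1$ is absolutely almost simple, $W_1$ acts irreducibly on $X^*(T_{\gamma_1})\otimes\Q$, and by the same reasoning the image of $V$ in $G(K_{\gamma_1}/K)$ must contain $W_1$ (a proper normal subgroup of $G(K_{\gamma_1}/K)$ not containing $W_1$ would have to be contained in the centralizer of the irreducible $W_1$-module, forcing it to be too small). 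Therefore $W_1$ is a quotient of $V$, a compositum of normal subgroups of $W_2$ (up to isomorphism, of copies of $W_2$ itself, which are normal in $W_2$), which is exactly the negation of the hypothesis.

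The main obstacle I anticipate is the bookkeeping in the previous paragraph: precisely tracking how $W_1\le G(K_{\gamma_1}/K)$, a quotient of $U$, interacts with the sub\emph{group} $V=p^{-1}(\prod W_2)\le U$, and verifying that the restriction of $V$ to $K_{\gamma_1}$ still contains $W_1$ rather than some smaller normal subgroup. The key inputs that make this go through are (i) $W_i$ is normal of controlled (in fact very small) index in $\mathrm{Aut}(\Phi(G_i,T))$, so replacing $G(K_{\gamma^{(j)}}/K)$ by $W_2$ costs only a finite quotient; and (ii) the irreducibility of the $W_1$-action on $X^*\otimes\Q$ (used already in the proof of Proposition~\ref{Kgad}), which rigidifies $W_1$ inside any group containing it and prevents it from being ``killed'' by passing to finite-index normal subgroups. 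With these in hand the contrapositive is complete, and the final sentence of the proposition — that Theorem~\ref{maintheorem} follows from Theorem~\ref{nonquotient} — is immediate: if $\sF_1,\sF_2$ are commensurable then $\sF_1\sF_2$ is finite over $\sF_2$, so by the proposition $W_1$ \emph{is} a quotient of a compositum of normal subgroups of $W_2$ and symmetrically; Theorem~\ref{nonquotient} then forces $G_1$ and $G_2$ to be isogenous over $\bar K$ (given they are not Weyl iso-trivial), i.e. of the same Killing–Cartan type.
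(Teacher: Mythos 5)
There is a genuine gap, and it occurs at the very first step of your contrapositive. You write that since $K_{\gamma_1}\subset\sF_1$, the extension $K_{\gamma_1}\sF_2/\sF_2$ is finite, ``i.e.\ $K_{\gamma_1}$ is contained in a finite subextension $M$ of $\sF_2/K$.'' This is not a reformulation but a false implication: $K_{\gamma_1}/K$ is already finite, so $K_{\gamma_1}\sF_2/\sF_2$ is finite \emph{unconditionally} and carries no information, and it certainly does not force $K_{\gamma_1}\subset\sF_2$ (take $K_{\gamma_1}$ linearly disjoint from $\sF_2$). The actual content of $[\sF_1\sF_2:\sF_2]<\infty$ is global: there exists a single finite Galois extension $L/K$ with $\sF_1\subset L\sF_2$, and the $L$ cannot be dropped. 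Consequently your entire construction of $U=G(K_{\gamma^{(1)}}\cdots K_{\gamma^{(r)}}/K)$ surjecting onto $G(K_{\gamma_1}/K)$ has no starting point. The paper's proof is built precisely around this $L$: one enlarges $L$ so that $G_1$ and $G_2$ split over it, and works with the groups $G(L\sF_i/L)$ rather than over $K$.

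Even granting the containment, your endgame does not deliver the right conclusion. You arrange for the image of $V=p^{-1}(\prod_j W_2)$ in $G(K_{\gamma_1}/K)$ to \emph{contain} $W_1$; that exhibits $W_1$ as a subgroup of a quotient of a compositum of normal subgroups of $W_2$, i.e.\ as a subquotient, whereas the hypothesis of the proposition requires $W_1$ to be a \emph{quotient}. (Your irreducibility argument for the containment is itself fragile: a normal subgroup of $\theta_{T_{\gamma_1}}(G_K)\subset\Aut(\Phi)$ need not contain $W_1$ --- e.g.\ $\Aut(\bD_4)\simeq W(\bF_4)$ has the normal subgroup $A$ of order $32$.) The paper sidesteps both difficulties at once by base change: over the splitting field $L$ it re-applies Theorem \ref{prthm} to produce an $L$-generic $\gamma\in\Gamma_1$, for which $G(L_{T_\gamma}/L)\simeq W_1$ \emph{exactly} (Lemma 4.1 of \cite{PRwc}), giving a genuine surjection $G(L\sF_2/L)\to G(L\sF_1/L)\to W_1$; and on the other side the groups $\theta_{T_\eta}(G(LK_\eta/L))$ land inside $W_2$ as normal subgroups, so $G(L\sF_2/L)$ is a compositum of normal subgroups of $W_2$. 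Your proof needs this base-change step to be repaired; the deduction of Theorem \ref{maintheorem} in your last sentences is fine once the proposition itself is established.
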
 

\begin{proof} Suppose that $\sF_1\sF_2$ is a  finite extension of
$\sF_2$. Being algebraic extensions of $K$, this implies that there is a
finite Galois extension $L$ of $K$ such that $\sF_1\sF_2\subset L\sF_2$. In
particular, $\sF_1\subset L\sF_2$, and there is a 
surjection $G(L\sF_2/L)\to G(L\sF_1/L)$. 

We can assume that the groups $G_1$ and $G_2$ are split over $L$. 
Consider $\Gamma_1$ as a subgroup of $G_1(L)$. It is Zariski dense in
$G_1$ considered as an algebraic group over $L$. By Theorem
\ref{prthm}, there exists a generic $L$-regular element $\gamma\in
\Gamma_1$. Since $G_1$ is split over $L$,  
by (\cite[Lemma 4.1]{PRwc}), $G(L_{T_{\gamma}}/L)\simeq W_1$.  
The element $\gamma$ is generic $K$-regular, and the splitting field
of $T_{\gamma}$ is contained in $\sF_1$. 
 Hence there is a surjection $G(L\sF_1/L)\to W_1$, and
consequently from  $G(L\sF_2/L)\to W_1$. 

Without loss of generality, we can assume that $\sF_2$ is a
compositum of finitely many Galois extensions $K_{\eta}$ over $K$,
where $\eta\in \Gamma_2$ is a generic $K$-regular element in
$G_2(K)$. The Galois group $G(K_{\eta}/K)$ contains 
$W_2$ as a normal subgroup, and there
 is a surjection from  $G(L\sF_2/L)\to W_1$.

Since $G_2$ is split over $L$, by (\cite[Lemma 4.1]{PRwc}), the image
$\theta_{T_{\eta}}(G(LK_{\eta}/L))$ is contained in $W_2$. Since this
is a normal subgroup of $W_2$, we see that   $G(L\sF_2/L)$ is a compositum
of groups each of which is isomorphic to a normal subgroup of $W_2$. 

Theorem \ref{nonquotient} implies that 
$W_1$ cannot be a quotient of  $G(L\sF_2/L)$. 
 This yields a contradiction and proves the proposition. 
\end{proof}

\begin{remark} 
By Equation (\ref{weylisotrivial}), Theorem \ref{nonquotient}
does not hold when the groups $G_1$ and $G_2$ are iso-trivial. 
Our methods do not help in distinguishing between the fields $\sF_1$
and $\sF_2$ in these cases.
\end{remark}

In Section \ref{weylortho}, we study the finer structure of Weyl
groups of orthogonal groups, especially its normal subgroups, 
 and in the remaining sections we give a
proof of Theorem \ref{nonquotient}. The proof, in a rough sense,
rests on  `semisimple' properties of Weyl groups.

\section{Weyl groups of orthogonal groups} \label{weylortho}
In this section, we study the structure of the Weyl groups of 
orthogonal groups, especially the lattice  of its normal
subgroups.  

The Weyl groups of $\bB_n$ and $\bC_n$ are isomorphic. 
The Weyl group
$W(\bB_n)$ can be seen as signed permutations on the collection of 
basis vectors $\{e_1, \cdots, e_n\}$ of $\R^n$. The group
$V=(\Z/2\Z)^n$ sits as a normal subgroup of $W(\bB_n)$ by assigning to an
element $\ve=(\ve_1, \cdots, \ve_n)\in (\Z/2\Z)^n$, the
signed permutation
\begin{equation}\label{scaction}
 \sigma_{\ve}(e_i)=\ve_ie_i.
\end{equation}
There is an exact sequence, 
\begin{equation}\label{bseq}
 1\to  (\Z/2\Z)^n\to W(\bB_n)\to S_n\to 1.
\end{equation}
A splitting of this exact sequence is given by 
the symmetric group $S_n$ sits inside $W(\bB_n)$ as permutations
without changing the sign. This makes 
\[W(\bB_n)\simeq (\Z/2\Z)^n\rtimes S_n,\] 
a semidirect product of $S_n$ by $(\Z/2\Z)^n$. 
The conjugacy action
of $W$ on the abelian normal subgroup $(\Z/2\Z)^n$ descends to give
the standard permutation action of $S_n$ on  $(\Z/2\Z)^n$. Given two
elements $(\ve_1, \sigma_1)$ and $(\ve_2, \sigma_2)$ in the set
$V\times S_n$, the group multiplication is given as, 
\[ (\ve_1, \sigma_1)(\ve_2, \sigma_2)= (\ve_1+\sigma_1\ve_2,
\sigma_1\sigma_2),\]
where we have used the additive notation for the group multiplication
restricted to $V$. 
The inverse of $(\ve, \sigma)$ is $(-\sigma^{-1}\ve, \sigma^{-1})$.

The Weyl group of $\bD_n, ~(n\geq 4)$ is isomorphic to 
the subgroup of $W(\bB_n)$
consisting of permutations and sign changes involving only even number
of sign changes of the set $\{e_1, \cdots, e_n\}$, i.e., the number of
$\ve_i=-1$ in Equation (\ref{scaction}) is even. 

Consider the exact sequence of $S_n$-modules
\begin{equation}\label{dseq}
1\to V_e\to (\Z/2\Z)^n ~\xrightarrow{\phi}~ \Z/2\Z\to 1,
\end{equation}
where $ \phi(\ve)=\sum_{i=1}^n\ve_i$ and 
$V_e$ is the kernel of $\phi$. We have an isomorphism, 
\[ W(\bD_n)\simeq V_e\rtimes S_n.\]

\begin{lemma}\label{exportho}
Let  $n\geq 3$. If $n$ is a power of $2$, then the exponent of
$W(\bD_n)$ is equal to the exponent $e(S_n)$ of the symmetric group $S_n$.

In all other cases, the exponent of the Weyl groups
of $\bB_n$ and $\bD_n$ is twice the exponent of the symmetric group $S_n$.  
\end{lemma}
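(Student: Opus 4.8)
The plan is to compute the exponent of $W(\bB_n)\simeq(\Z/2\Z)^n\rtimes S_n$ and of $W(\bD_n)\simeq V_e\rtimes S_n$ by examining the orders of elements $(\ve,\sigma)$ in terms of the cycle structure of $\sigma$. First I would record the basic fact about orders in these semidirect products: for $(\ve,\sigma)\in W(\bB_n)$ with $\sigma$ a product of disjoint cycles $C_1,\dots,C_k$ of lengths $\ell_1,\dots,\ell_k$, the element $(\ve,\sigma)^{\ell_j}$ restricted to the coordinates moved by $C_j$ is the "sign change along the cycle'' $\sum_{m}\sigma^m\ve$ summed over that cycle, which is either trivial or the total sign change on those $\ell_j$ coordinates; hence the order of $(\ve,\sigma)$ is either $\mathrm{lcm}(\ell_1,\dots,\ell_k)$ or $\mathrm{lcm}(2\ell_1',\dots,2\ell_k',\ell_{j}\dots)$, i.e. each cycle contributes either $\ell_j$ or $2\ell_j$ to the lcm depending on the parity of the summed sign along that cycle. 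Consequently the exponent of $W(\bB_n)$ is $\mathrm{lcm}$ over all integers of the form (over a partition $\ell_1+\cdots+\ell_k\le n$) of the $\ell_j$ and $2\ell_j$, and since a single cycle of length $\ell\le n$ with an odd sign change is realizable, we get $e(W(\bB_n))=\mathrm{lcm}\big(e(S_n),\,2\cdot e(S_n)\big)$ — but more care is needed because the "$2\ell$'' contributions can be combined across cycles. The cleanest formulation: $e(W(\bB_n)) = \mathrm{lcm}\{\,\mathrm{lcm}(a_1,\dots,a_k) : a_j\in\{\ell_j,2\ell_j\},\ \sum \ell_j \le n\,\}$. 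I would then observe this equals $e(S_n)$ if every cycle length $\le n$ is a power of $2$ — impossible for $n\ge3$ since $3\le n$ forces a $3$ available — so in fact for all $n\ge 3$ one has $e(W(\bB_n))=2\,e(S_n)$ precisely when doubling some available cycle length $\ell$ (with $\ell$ achieving the relevant prime-power in $e(S_n)$, or $\ell=1$) produces something not already dividing $e(S_n)$; since $\ell=1$ is always available and $2\mid e(S_n)$ for $n\ge 2$, doubling gains nothing from $\ell=1$, so one must find $\ell$ with $2\ell\nmid e(S_n)$. The key number-theoretic point is: such $\ell\le n$ exists iff $n$ is not a power of $2$. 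Indeed, writing $e(S_n)=\mathrm{lcm}(1,\dots,n)$, the $2$-adic valuation is $v_2(e(S_n))=\lfloor\log_2 n\rfloor$; one can double $\ell$ fruitfully iff there is $\ell\le n$ with $v_2(\ell)=\lfloor\log_2 n\rfloor$, and the least such $\ell$ is $2^{\lfloor\log_2 n\rfloor}\le n$ — always true — so doubling the cycle of length $2^{\lfloor\log_2 n\rfloor}$ gives $2^{\lfloor\log_2 n\rfloor+1}$, which exceeds $v_2(e(S_n))$ unless $2^{\lfloor\log_2 n\rfloor+1}\le n$, but it always fails when... hmm, this needs the reverse: $2^{\lfloor\log_2 n\rfloor+1} > n$ always, so $v_2(2\ell) = \lfloor\log_2 n\rfloor+1 > v_2(e(S_n))$, giving $2\,e(S_n)\mid e(W(\bB_n))$ for every $n\ge 2$. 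This would suggest the exponent of $W(\bB_n)$ is $2e(S_n)$ for all $n\ge 2$, and the "power of $2$'' dichotomy is really about $W(\bD_n)$, not $W(\bB_n)$; so the Lemma's phrasing about $\bB_n$ in the exceptional case must be read as a statement only about $\bD_n$, and I would verify this carefully against small cases ($n=3,4$) before writing the argument.

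For $W(\bD_n)=V_e\rtimes S_n$ the difference is the constraint that the total number of sign changes is even, equivalently $\phi(\ve)=0$. Here, for $(\ve,\sigma)$ with $\sigma$ having cycle type $(\ell_1,\dots,\ell_k)$, the sign change along cycle $C_j$ can be made odd, but the sum of the $k$ cycle-sign-changes must be even, so the number of cycles along which we get an odd (order-doubling) contribution must be even — in particular at least two. Therefore $e(W(\bD_n))$ is the lcm over partitions $\ell_1+\cdots+\ell_k\le n$ of $\mathrm{lcm}(a_1,\dots,a_k)$ where an \emph{even} number of the $a_j$ equal $2\ell_j$ and the rest equal $\ell_j$. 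The element with no doubling contributes $e(S_n)$, so $e(S_n)\mid e(W(\bD_n))$ always. To get an extra factor of $2$ we need two cycles $C_j,C_{j'}$ with lengths $\ell,\ell'$, $\ell+\ell'\le n$, such that $\mathrm{lcm}$ of the collection including $2\ell$ and $2\ell'$ strictly exceeds $e(S_n)$; the optimal choice takes $\ell=2^{\lfloor\log_2 n\rfloor}$ and $\ell'=1$, requiring $2^{\lfloor\log_2 n\rfloor}+1\le n$ — and \emph{this} is exactly the condition that $n$ is not a power of $2$. When $n=2^r$, one checks no valid pair exists that pushes $v_2$ up (any $\ell$ with $v_2(\ell)=r$ is $\ell=n$ itself, leaving no room for a second cycle, and smaller $2$-valuations don't help because doubling them still divides $e(S_n)$), and similarly no odd prime can be gained; hence $e(W(\bD_n))=e(S_n)$. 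This is the crux of the Lemma.

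The main obstacle I anticipate is the bookkeeping in the case $n=2^r$: one must show that \emph{no} combination of an even number of cycle-doublings, across \emph{any} partition of an integer $\le n$, yields a prime power not already dividing $e(S_n)=\mathrm{lcm}(1,\dots,n)$. The $2$-adic part is the delicate one: doubling a cycle of length $\ell$ gives $v_2 = v_2(\ell)+1$, and we need $v_2(\ell)+1 \le \lfloor\log_2 n\rfloor = r$ whenever $\ell \le n-1$ (so that a second cycle of length $\ge 1$ fits); since $\ell\le n-1<2^r$ forces $v_2(\ell)\le r-1$, we get $v_2(\ell)+1\le r$, so indeed nothing new is gained — this closes the case, and I would write it as the key inequality. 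For odd primes $p$, doubling is irrelevant to $v_p$, and $v_p(\mathrm{lcm}$ of cycle lengths$)\le v_p(e(S_n))$ automatically since all cycle lengths are $\le n$. I would also double-check the order formula in the semidirect product at the start, since the whole argument hinges on the claim that a cycle of length $\ell$ contributes either $\ell$ or $2\ell$ (and never, say, $4\ell$) to the order — this follows because $(\ve,\sigma)^\ell$ acts on the $C_j$-block purely by a sign-change, whose square is the identity. Finally I would reconcile the statement for $\bB_n$: re-examining, $W(\bB_n)\supset W(\bD_n)\times\{\pm1\}$ when... actually $W(\bB_n)\simeq W(\bD_n)\rtimes\Z/2$, and the extra sign change gives an element of order $2$ realizing a single odd cycle-sign-change, so $e(W(\bB_n))=\mathrm{lcm}(e(W(\bD_n)),\,2e(S_n))=2e(S_n)$ for all $n\ge 3$, consistent with the second clause of the Lemma; in the power-of-$2$ case the first clause applies only to $\bD_n$, and I would state the proof accordingly.
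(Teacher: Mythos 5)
Your proposal is correct and follows essentially the same route as the paper: both compute orders in the semidirect product via $(\ve,\sigma)^k=(\ve+\sigma\ve+\cdots+\sigma^{k-1}\ve,1)$, reduce everything to the $2$-adic valuation $\lfloor\log_2 n\rfloor$ of $e(S_n)$, and exhibit the same witness element (a cycle of length $2^{\lfloor\log_2 n\rfloor}$ with an odd sign sum, paired in type $\bD$ with a second sign change on a fixed point, which is exactly what forces $n>2^{\lfloor\log_2 n\rfloor}$). Your cycle-by-cycle order formula actually makes the non-existence step for $W(\bD_{2^l})$ more explicit than the paper's ``without loss of generality,'' and your reading of the statement (the first clause concerns only $\bD_n$, while $\bB_{2^l}$ falls under the second) matches the paper's intended proof.
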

\begin{proof} Let $\sigma\in S_n$ be of order $k$. For any $\ve\in
 V= (\Z/2\Z)^n$, 
\[(\ve_{\sigma}, 1)=(\ve, \sigma)^k=(\ve+\sigma(\ve)+\cdots+\sigma^{k-1}(\ve),1).\]
Since every element in $V$ is of order $2$, the exponent of $W(\bB_n)$
or $W(\bD_n)$ is either $e(S_n)$ or $2e(S_n)$. 

Since the exponent is the least common multiple of the orders of the
elements in a group, in order for the exponent to be $2e(S_n)$, we need
to produce an element in the Weyl group of order $2^{l+1}$, where
$2^l$ is the maximum power of $2$ that divides $n$ (and
there is an element of that order in $S_n$). 

Suppose $n=2^lm$, with $m>1$. Consider the element $\sigma=(1\cdots
2^l)\in S_n$,  and $\ve=(1,0,\cdots,0,1)\in V_e\subset V$. In this
case, the element $\ve_{\sigma}$ will have co-ordinate $1$ at the
first $2^l$ indices, and all other co-ordinates are $0$. It
follows that $(\ve, \sigma)\in W(\bD_n)$ has order $2^{l+1}$. 

Let $n=2^l$. For the group $\bD_n$, if we
require an element of order $2^{l+1}$, without loss of generality, we can
take $\sigma$ and $\ve$ as in the foregoing paragraph. In this case, 
the element $(\sigma, \ve)$ has order $2^l$, and thus there are no
elements of order $2^{l+1}$ in $W(\bD_n)$.  

For the group $B_{2^l}$, consider $\sigma$ as above, and let
$\ve=(1,0,\cdots,0)$. Then  $\ve_{\sigma}=(1,1,\cdots,1)$, and the
element  $(\sigma, \ve)$ has order $2^{l+1}$. 
\end{proof}

Let $\Delta$ denote the $S_n$-stable subgroup $\Z/2\Z$ sitting diagonally 
inside $(\Z/2\Z)^n$. Let
$K_4$ denote the normal subgroup of
$S_4$, consisting of even permutations which are products of two
disjoint transpositions in $S_4$.
\begin{lemma}\label{invsub}
Let $n\geq 3$, and $H$ be a non-trivial normal subgroup of $S_n$. 
The only proper subspaces of
$(\Z/2\Z)^n$ which are invariant under $H$ 
are $\Delta$ and $V_e$.
\end{lemma}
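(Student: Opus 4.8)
The plan is to analyze the possible $H$-invariant subspaces $U \subseteq (\Z/2\Z)^n$ case by case according to which normal subgroup $H$ of $S_n$ we are dealing with. Recall that the normal subgroups of $S_n$ are: the trivial group, $A_n$, $S_n$ itself for all $n$, and additionally $K_4$ when $n=4$. Since a larger group has fewer invariant subspaces, the critical cases to rule out are $H = A_n$ (for $n \geq 5$, where $A_n$ is simple and the analysis is cleanest), $H = A_4$ and $H = K_4$ when $n = 4$, and $H = A_3 = \Z/3\Z$ when $n = 3$. Throughout, $(\Z/2\Z)^n$ is the permutation module $M$ over $\F_2$, and I will use the standard short exact sequences $0 \to \Delta \to M \to M/\Delta \to 0$ and $0 \to V_e \to M \xrightarrow{\phi} \Z/2\Z \to 0$ relating $M$ to the "sum-zero" submodule $V_e$ and the "diagonal" submodule $\Delta$.

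First I would handle $n \geq 5$ with $H \supseteq A_n$. Suppose $U$ is a proper nonzero $H$-invariant subspace. If $U \not\subseteq V_e$, pick $v \in U$ with $\phi(v) = 1$, i.e. $v$ has an odd number of $1$'s. Using that $A_n$ acts $(n-2)$-transitively enough to realize arbitrary $3$-cycles, I would show that differences $v - \tau(v)$ for transpositions $\tau = (ij)$ realized inside $A_n$ via a product of two transpositions lie in $U$; these differences are the standard weight-$2$ vectors $e_i + e_j$ (when exactly one of $i,j$ is in the support of $v$) or $0$. Choosing the pair $(i,j)$ appropriately, I get some $e_i + e_j \in U$; then transitivity of $A_n$ on such pairs (valid for $n \geq 5$, and even $n=4$ within $A_4$) shows all weight-$2$ vectors are in $U$, so $V_e \subseteq U$. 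Combined with the chosen $v \notin V_e$, this forces $U = M$, a contradiction — so $U \subseteq V_e$. Next, if $U \subseteq V_e$ is nonzero and $A_n$-invariant, pick $0 \neq w \in U$; then $w$ has even weight, so weight $\geq 2$, and again forming differences $w - \tau(w)$ with $\tau \in A_n$ moving exactly one support element of $w$ produces a weight-$2$ vector in $U$ (unless $w$ is already all-ones, the case $\phi$-even needs $n$ even, handled directly), whence $V_e \subseteq U$ and $U = V_e$. This pins down $U \in \{\Delta, V_e\}$ once one checks $\Delta \subseteq V_e$ iff $n$ is even, and that $\Delta$ is visibly $S_n$-invariant — but note $\Delta \subseteq V_e$ fails for $n$ odd, so for $n$ odd the two listed subspaces are genuinely distinct and incomparable; the statement as phrased ("$\Delta$ and $V_e$") should be read as the union of these two as the complete list.

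The remaining small cases $n = 3$ and $n = 4$ I would do by near-direct inspection. For $n=3$: $H$ is $A_3 \cong \Z/3\Z$ or $S_3$; the $\F_2[\Z/3\Z]$-module $(\Z/2\Z)^3$ decomposes (since $\gcd(3, 2) = 1$... wait, $\Z/3\Z$ over $\F_2$ is semisimple as $2 \nmid 3$) as $\Delta \oplus V_e$ where $V_e$ is $2$-dimensional and irreducible over $\F_2$ (the $3$rd roots of unity generate $\F_4$), so the only proper invariant subspaces are exactly $\Delta$ and $V_e$ — giving the result immediately for $H = A_3$ and a fortiori for $H = S_3$. For $n = 4$: the delicate point is $H = K_4 = \{e, (12)(34), (13)(24), (14)(23)\}$, which is the smallest normal subgroup in play; I would enumerate the $K_4$-invariant subspaces of $(\Z/2\Z)^4$ directly (the module is $4$-dimensional, and $K_4$ acts by specific permutations), verifying that the only proper ones fixed by all of $K_4$ are still $\Delta$ and $V_e$ — this is a finite check over $\F_2$ but does require care, since $K_4$ is so small that a priori more invariant subspaces could appear; fortunately the action of $K_4$ on $\F_2^4$ turns out to have $V_e$ as its unique $3$-dimensional invariant subspace and $\Delta$ inside it, with no stray planes surviving. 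This last computation — showing $K_4$ (not the full $S_4$) still only fixes $\Delta$ and $V_e$ — is the main obstacle, since it is precisely where "$H$ non-trivial" is tight and cannot be relaxed to anything weaker.
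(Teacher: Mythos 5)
Your $n\ge 5$ branch (for $H\supseteq A_n$) is essentially the paper's argument: form differences $v-\sigma(v)$ with $\sigma$ a product of two disjoint transpositions to manufacture a weight-two vector $e_i+e_j$, then use transitivity of $A_n$ on unordered pairs to sweep out $V_e$. Your $n=3$ case, via semisimplicity of $\F_2[\Z/3\Z]$ and irreducibility of the two-dimensional summand (the $\F_4$-line), is a correct and cleaner alternative to the paper's hands-on computation. One organizational slip in the first branch: if the element $v\in U$ with $\phi(v)=1$ happens to be the all-ones vector ($n$ odd), every difference $v-\tau(v)$ vanishes and the argument produces nothing, so you cannot conclude $U\subseteq V_e$ (indeed $U=\Delta\not\subseteq V_e$ is a legitimate outcome for $n$ odd). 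This is precisely why the paper splits on ``$\ve\notin\Delta$'' rather than ``$\ve\notin V_e$''; the repair is routine.

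The genuine gap is the step you yourself single out as the main obstacle, $n=4$ with $H=K_4$: the finite check does \emph{not} come out as you assert. The plane $P=\{0,\;e_1+e_2,\;e_3+e_4,\;e_1+e_2+e_3+e_4\}$ is $K_4$-invariant --- $(12)(34)$ fixes $e_1+e_2$ and $e_3+e_4$, while $(13)(24)$ and $(14)(23)$ interchange them --- and it is neither $\Delta$ nor $V_e$; the two analogous planes attached to the other pair-partitions of $\{1,2,3,4\}$ are invariant as well. So there \emph{are} stray planes, and the lemma as stated is actually false for $H=K_4$. (The paper's own proof conceals this by asserting that $K_4$ acts doubly transitively on $\{1,\dots,4\}$, which is false: $K_4$ is regular of order $4$, and the orbit of $e_1+e_2$ is just $\{e_1+e_2,\,e_3+e_4\}$.) Your argument, and the statement, are correct under the hypothesis $H\supseteq A_n$, since $A_4$ genuinely is doubly transitive; and that hypothesis suffices for every application in the paper (when the image in $S_n$ is trivial the subspace is $S_n$-invariant, and the groups $G(E_i/K)$ appearing later are $A_n$ or $S_n$). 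You should either add that hypothesis or record the counterexample rather than claiming the check succeeds.
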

Note that these subspaces are also invariant under $S_n$. 
\begin{proof}
We identify $V$ with $\F_2^n$, the $n$-dimensional
vector space over the field with two elements (denoted by $0$ and $1$)
$\F_2$. For any $\ve\in V$, let $Z_{\ve}$  denote the
subset of $\{1,\cdots, n\}$ consisting of the indices $i$, 
where $\ve_i=0$. Let $Z_{\ve}'$ denote the complement of $Z_{\ve}$ in
$\{1,\cdots, n\}$.

Let $W\subset V$ be a $H$-invariant subspace of $V$ which is not equal
to $\Delta$.
Suppose $\ve\in W$ is not an element of $\Delta$. Then both $Z_{\ve}$ and
$Z_{\ve}'$ are non-empty. 

We consider first the special case,  $n=4$, and
$|Z_{\ve}|=|Z_{\ve}'|=2$. Since the group $H\supset K_4$, and $K_4$ acts two
transitively on the set $\{1, \cdots,4\}$, it follows that $W=V_e$. 

We now consider the  general case, 
and  assume we are not in the above case when $n=4$.  Choose a subset
$I=\{i,j,k,l\}\subset \{1,\cdots, n\} $ 
of cardinality $4$ when $n\geq 4$, as follows: let $i\in Z_{\ve},
~j\in Z_{\ve}'$, and $k, l$ both are in the same set $Z_{\ve}$ or
$Z_{\ve}'$. 
Let $\sigma=(ij)(kl)$, and when $n=3$, let $\sigma$ be any even
permutation.  It can be seen that the element
$\sigma(\ve)\neq \ve$. 
 
Consider the non-zero element $\ve'=\ve+\sigma(\ve)\in W$. 
Since the map $\phi$ is $S_n$-invariant, $\ve'$ lies in the even
subspace $V_e$. 

Since $\sigma$ fixes the indices which are not in $I$, for any index
$m$ not in $I$, $\ve_m'=0$. Further,  $\ve_k'=\ve'_l=0$. 
Thus $\ve'$ is supported on two indices,
i.e., is of the form $e_{i_1}+e_{i_2}, ~i_1\neq i_2$, where $e_1,
\cdots, e_n$ form the standard basis for $\F_2^n$. 

Since $H$ acts doubly transitively on the set $\{1, \cdots,
n\}$, it follows that $W$ contains all elements of the form $e_i+e_j$
with $1\leq i< j\leq n$. Hence $W$ contains $V_e$, and this proves the
lemma. 

\end{proof}

We now describe the normal subgroups of the Weyl group $W(\bD_n)$. 
\begin{lemma}\label{normal}
Let $n\geq 3$. The proper normal subgroups of $W(\bD_n)$ are, 
\[ \Delta,~V_e, ~V_e\rtimes A_n, \quad \mbox{and} \quad V_e\rtimes
K_4 \quad\mbox{when}~ n=4.\]
\end{lemma}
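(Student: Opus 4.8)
The plan is to classify the proper normal subgroups $N$ of $W(\bD_n) \simeq V_e \rtimes S_n$ by analyzing the projection $\pi: W(\bD_n) \to S_n$ with kernel $V_e$. Given a normal subgroup $N$, its image $\pi(N)$ is a normal subgroup of $S_n$, and $N \cap V_e$ is an $S_n$-invariant (indeed $W(\bD_n)$-invariant) subspace of $V_e$. I would organize the argument around these two invariants. First, I would dispose of the case $N \subseteq V_e$: then $N$ is an $S_n$-invariant subspace of $V_e$, and since the only proper subspaces of $(\Z/2\Z)^n$ invariant under a nontrivial normal subgroup of $S_n$ are $\Delta$ and $V_e$ (Lemma \ref{invsub}, applied with $H = S_n$ or $H = A_n$), we get $N = \Delta$ or $N = V_e$ (noting $\Delta \subseteq V_e$ precisely when $n$ is even, but for $n$ odd $\Delta$ still makes sense as a subgroup of $W(\bD_n)$ — one must be slightly careful here, and I would check that $\Delta \subset V_e$ iff $n$ even; when $n$ is odd, $\Delta \cap V_e = 0$, so $\Delta$ as a subgroup of $W(\bD_n)$ requires a separate look, presumably $\Delta$ is to be interpreted inside $W(\bB_n)$ and the statement read accordingly — I would clarify this in the writeup).

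Next, suppose $\pi(N)$ is nontrivial. Since $\pi(N) \trianglelefteq S_n$, for $n \geq 5$ we have $\pi(N) \in \{A_n, S_n\}$, for $n = 4$ we have $\pi(N) \in \{K_4, A_4, S_4\}$, and for $n = 3$ we have $\pi(N) \in \{A_3, S_3\}$. In each case I would first show that $V_e \subseteq N$. The key point is that $N \cap V_e$ is a $W(\bD_n)$-invariant subspace, and I would produce a nonzero element of it using commutators: pick $v \in V_e$ and $(w,\sigma) \in N$ with $\sigma \in \pi(N)$ nontrivial; the commutator $[(v,1),(w,\sigma)] = (v - \sigma v, 1) \in N \cap V_e$, and for a suitable choice of $v$ and $\sigma$ this is nonzero (as in the proof of Lemma \ref{invsub}). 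Then Lemma \ref{invsub} forces $N \cap V_e \in \{\Delta, V_e\}$ — and since $N \cap V_e$ is a nonzero $H$-invariant subspace where $H = \pi(N)$ contains either $K_4$ or $A_n$ acting transitively, the commutator trick actually lands us in $V_e$ directly (the element $v - \sigma v$ is supported on two coordinates, generating all such elements under transitivity, hence $V_e$). So $V_e \subseteq N$, and therefore $N = V_e \rtimes \pi(N)$, a full preimage. This yields the candidate list $V_e \rtimes A_n$, $V_e \rtimes S_n = W(\bD_n)$ (not proper), and $V_e \rtimes K_4$ when $n = 4$; for $n = 3$, $V_e \rtimes A_3$ and $V_e \rtimes S_3 = W(\bD_3)$ — here one should note $W(\bD_3) \simeq W(\bA_3) = S_4$ and double-check the list matches the claimed one.

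Finally, I would verify that each group on the list is genuinely normal in $W(\bD_n)$: $\Delta$ and $V_e$ are characteristic subspaces stable under the $S_n$-action hence normal; $V_e \rtimes A_n$ is normal because $A_n \trianglelefteq S_n$ and $V_e$ is $S_n$-stable; and $V_e \rtimes K_4 \trianglelefteq W(\bD_4)$ because $K_4 \trianglelefteq S_4$. One should also confirm these are all distinct and proper (e.g., $V_e \rtimes A_n \neq W(\bD_n)$ since $A_n \neq S_n$, and for $n = 4$, $K_4 \subsetneq A_4 \subsetneq S_4$ gives the strict chain).

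The main obstacle I anticipate is the small-$n$ bookkeeping, particularly $n = 3$ and $n = 4$: for $n = 4$ one must correctly handle the extra normal subgroup $K_4$ of $S_4$ (and the two-transitivity of $K_4$ used in Lemma \ref{invsub}), and for $n = 3$ the identification $W(\bD_3) \simeq S_4$ means the ambient group has a different-looking normal subgroup lattice that must be reconciled with the stated answer. A secondary subtlety is the status of $\Delta$ when $n$ is odd, since then $\Delta \not\subset V_e$; I would need to state precisely in what sense $\Delta$ is a normal subgroup of $W(\bD_n)$ in that case (or note that the diagonal $\Z/2\Z$ in $W(\bB_n)$ does not lie in $W(\bD_n)$ for $n$ odd, so the intended reading must be that the list applies with this caveat, consistent with how $\Delta$ is used in Corollary \ref{product} and Equation \eqref{weylisotrivial} for $n$ odd).
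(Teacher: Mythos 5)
Your proposal follows essentially the same route as the paper's proof: split on whether the image of $N$ in $S_n$ is trivial, handle the kernel case via Lemma \ref{invsub}, and in the other case use the commutator $(\ve-\sigma(\ve),1)\in N\cap V_e$ to force $V_e\subseteq N$, so that $N$ is a full preimage $V_e\rtimes \pi(N)$. The caveat you flag about $\Delta$ when $n$ is odd (where $-\mathrm{Id}\notin W(\bD_n)$, so $\Delta$ is not even a subgroup) is a genuine imprecision in the lemma's statement that the paper's own proof does not address, and is worth recording.
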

\begin{proof} Let $ N$ be a proper, normal subgroup of $W=W(\bD_n)$.  
Denote by $\overline{N}$ the image of
$N$ in $S_n$ and $V_N=V\cap N$ the kernel of the projection map to
$S_n$. 

Suppose $\overline{N}=(e)$ is trivial. Then $N\subset V_e$ and has to be
an invariant subspace for the action of $S_n$. 
By Lemma \ref{invsub},  $V_N = \{0\},
\Delta$ or $V_e$. 

Assume now that $\overline{N}$ is non-trivial. 
We claim that
$V_N=V_e$. If this is so, then $N$ will be isomorphic to the
semi-direct product $V_e\rtimes \overline{N}$. 

For every $\sigma \in \overline{N}$, 
choose  $\ve_{\sigma} \in V_e$ such that $(\ve_{\sigma},\sigma) \in
N$. Since $N$ is normal, for any $\ve \in V_e$, the element 
\[(\ve,1)(\ve_{\sigma},\sigma)(\ve, 1)^{-1}( \ve_{\sigma}, \sigma)^{-1}=
(\ve+\ve_{\sigma}-\sigma(\ve), \sigma)(-\sigma^{-1}(\ve_{\sigma}),
\sigma^{-1})=(\ve-\sigma(\ve),1),\]
belongs to $N$, where $ 1$ denotes  the indentity
element in $S_n$. 

Thus  for any $\ve\in V_e$ and any $\sigma\in
\overline{N}$, the element $\ve-\sigma(\ve)\in V_N$.  Choose
$\ve=(1, 0,\cdots, 0)$.  When $n\geq 4$, 
let $i, j, k$ be distinct elements in the set
$\{2,\cdots, n\}$, and take $\sigma_i=(1,i)(j,k)$ belongs to
$\overline{N}$. 
When $n=3$, let $\sigma_i=(1,2,3)^i$.
The elements $\ve-\sigma_i(\ve)=(1,0,\cdots, 1_i,0,\cdots, 0)$ where
$1_i$ denotes $1$ at the $i$-th co-ordinate generate the subspace
$V_e$. 
Hence $V_N=V_e$ and this
proves the lemma. 
\end{proof}

\begin{corollary} \label{product}
Let $n=3$ or $n\geq 5$. Then the following holds: 
\begin{enumerate}
\item If $n$ is odd, $\Delta \cap V_e$ is the identity subgroup. Hence, 
\[ W(\bB_n)\simeq W(\bD_n)\times \Delta, \quad n ~\mbox{odd}.\]
\item When $n$ is even, $\Delta \subset V_e$, and $W(\bB_n)$ cannot be
  expressed as a product of two non-trivial groups. 

\end{enumerate}
\end{corollary}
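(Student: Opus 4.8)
The plan is to treat the two items separately: (1) is a short computation, while (2) requires ruling out a hypothetical nontrivial direct product decomposition, and the casework there will be the main obstacle. For (1): the generator of $\Delta$ is the all-ones vector $(1,\dots,1)\in V$, whose image under $\phi$ is $n\bmod 2$, so for odd $n$ it is not in $V_e=\ker\phi$; this gives $\Delta\cap V_e=\{0\}$, and hence $\Delta\cap W(\bD_n)=\{0\}$ since $\Delta\subset V$ and $V\cap W(\bD_n)=V_e$. Next one observes that $\Delta$ is \emph{central} in $W(\bB_n)$: it sits in the abelian subgroup $V$, and the conjugation action of $S_n$ on $V$ permutes coordinates and so fixes $(1,\dots,1)$. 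Since $W(\bD_n)$ has index $2$ in $W(\bB_n)$ it is normal, and $|\Delta\cdot W(\bD_n)| = 2\cdot 2^{n-1}n! = 2^n n! = |W(\bB_n)|$; two normal subgroups meeting trivially and generating the whole group exhibit it as their internal direct product, so $W(\bB_n)\simeq W(\bD_n)\times\Delta$.

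For (2), assume $n$ is even, hence $n\ge 6$ under the standing hypothesis, and suppose for contradiction that $W:=W(\bB_n)\simeq A\times B$ with $A$ and $B$ nontrivial. The first step is to recognize $V$ intrinsically. Since $S_n$ has no nontrivial normal $2$-subgroup for $n\ge5$, every normal $2$-subgroup of $W$ lies in $V$, so $V=O_2(W)$ is the $2$-core, a characteristic subgroup; as $O_2$ respects direct products, $V\simeq O_2(A)\times O_2(B)$ and $S_n\simeq W/V\simeq A/O_2(A)\times B/O_2(B)$. Now $S_n$ is directly indecomposable for $n\ge5$ (its only normal subgroups are $1,A_n,S_n$, and $A_n\cap A_n\ne1$), so one of these quotients is trivial; say $B=O_2(B)$ is a $2$-group. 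Then $B\le O_2(W)=V$, and being a direct factor $B$ is normal in $W$, hence an $S_n$-invariant subspace of $V$.

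By Lemma~\ref{invsub} with $H=S_n$, a nontrivial $S_n$-invariant subspace of $V$ is $\Delta$, $V_e$, or all of $V$, and each must be excluded. A short computation in $V\rtimes S_n$ gives $C_W(V)=V$ and $C_W(V_e)=V$, since no nontrivial permutation fixes every $e_i$, respectively every $e_i+e_j$. If $B=V$, then $A$ centralizes $V$, so $A\le C_W(V)=V$ and $A=A\cap V=1$, impossible. If $B=V_e$, then $A\le C_W(V_e)=V$, contradicting $|A|=|W|/|V_e|=2\,n!>2^n=|V|$. If $B=\Delta$, I would use that a subgroup $N$ of $A\times B$ containing $B$ satisfies $N=(N\cap A)\times B$; applied to the normal subgroup $N=V\supseteq\Delta$ this yields $V=(V\cap A)\times\Delta$, so $V\cap A$ is an $S_n$-invariant subspace of dimension $n-1$, forcing $V\cap A=V_e$ by Lemma~\ref{invsub} — but for even $n$ we have $\Delta\subseteq V_e$, so $V_e$ and $\Delta$ cannot be complementary in $V$. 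All cases being excluded, $W(\bB_n)$ admits no nontrivial direct decomposition. The pleasant point is that the last contradiction is exactly the inclusion $\Delta\subseteq V_e$ that, for odd $n$, drives the decomposition in (1).
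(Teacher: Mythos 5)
Your proof is correct. The paper itself states this corollary without proof, as a consequence of Lemmas \ref{invsub} and \ref{normal}, so there is no written argument to compare against line by line; but your verification is complete and rests on the same key ingredient, namely the classification of $S_n$-invariant subspaces of $V=(\Z/2\Z)^n$ in Lemma \ref{invsub}. Part (1) is exactly the computation the paper intends: $\phi(1,\dots,1)=n\bmod 2$, centrality of $\Delta$, normality of the index-two subgroup $W(\bD_n)$, and an order count. For part (2), the route the paper's lemmas most directly suggest is to show that for even $n$ \emph{every} nontrivial normal subgroup of $W(\bB_n)$ contains the center $\Delta$ (a nontrivial normal subgroup meets $V$ nontrivially by the commutator computation in the proof of Lemma \ref{normal}, and every nontrivial invariant subspace of $V$ contains $\Delta$ when $n$ is even), whence two direct factors cannot intersect trivially. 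Your argument instead identifies $V=O_2(W)$ as a characteristic subgroup, invokes the direct indecomposability of $S_n$ to force one factor $B$ into $V$, and then eliminates $B=V$, $V_e$, $\Delta$ by centralizer computations and the Dedekind-type identity $N=(N\cap A)\times B$ for $N\supseteq B$. This is somewhat longer but equally valid, and it has the merit of making explicit where the hypothesis $n\ne 4$ enters (through the absence of a normal $2$-subgroup of $S_n$ and through Lemma \ref{invsub}); your closing observation that the final contradiction in case $B=\Delta$ is precisely the inclusion $\Delta\subseteq V_e$ for even $n$ correctly isolates the parity dichotomy that the corollary is about.
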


\section{Proof of Theorem \ref{nonquotient}: Initial cases}
The proof of Theorem \ref{nonquotient} breaks up into different cases
depending on the relative structure of the Weyl group. In this
section, we use general group theoretic techniques to establish the
theorem in some cases. 

\subsection{Simple components}
Let $U$ be a finite group. A simple group $S$ is said to be a
component of $U$, if there exists subgroups $V_1\subset V_2\subset U$,
with $V_1$ normal in $V_2$ and $V_2/V_1\simeq S$. 
Denote by $JH_s(U)$ the simple components (counted
upto isomorphism and without multiplicity) 
that occur in $U$.
\begin{lemma}
Let $U$ be a compositum of groups $U_1,\cdots, U_r$.
 Then 
\[ JH_s(U)= \bigcup_{i=1,\cdots,r}JH_s(U_i).\]
\end{lemma}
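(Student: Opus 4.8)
The plan is to work entirely with the \emph{section} (subquotient) description of $JH_s$ and to isolate two elementary closure properties. First, $JH_s$ is \emph{monotone under sections}: if $H'$ is a subquotient of $H$ (that is, $H' \cong V_2/V_1$ for some $V_1 \trianglelefteq V_2 \leq H$), then $JH_s(H') \subseteq JH_s(H)$; this is immediate, since a section $W_2/W_1$ of $H'$ lifts through the subgroup and quotient maps to a section of $H$. In particular $JH_s$ only grows under passing to subgroups and to quotients. Second, I claim that for any $N \trianglelefteq H$ one has $JH_s(H) \subseteq JH_s(N) \cup JH_s(H/N)$ (the reverse inclusion following from monotonicity, since both $N$ and $H/N$ are sections of $H$).

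This second property is the crux, and I expect the bookkeeping here to be the only real obstacle. Given a simple section $S \cong V_2/V_1$ of $H$ with $V_1 \trianglelefteq V_2 \leq H$, I would form the intermediate subgroup $V_1(V_2 \cap N)$, which is a subgroup (as $V_1$ normalizes $V_2 \cap N$), contains $V_1$, and is normal in $V_2$ (being the product of the two normal subgroups $V_1$ and $V_2 \cap N$ of $V_2$). Since $S = V_2/V_1$ is simple, either $V_1(V_2 \cap N) = V_1$ or $V_1(V_2 \cap N) = V_2$. In the first case $V_2 \cap N \subseteq V_1$, hence $V_1 \cap N = V_2 \cap N$, and passing to images in $H/N$ gives $\overline{V_1} \trianglelefteq \overline{V_2}$ with $\overline{V_2}/\overline{V_1} \cong V_2/\bigl(V_2 \cap V_1 N\bigr) = V_2/\bigl(V_1(V_2\cap N)\bigr) = V_2/V_1 = S$, so $S \in JH_s(H/N)$. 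In the second case the second isomorphism theorem gives $S = V_2/V_1 = V_1(V_2 \cap N)/V_1 \cong (V_2 \cap N)/(V_1 \cap N)$ with $V_1 \cap N \trianglelefteq V_2 \cap N$, so $S \in JH_s(N)$.

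With these two properties in hand, an induction on $r$ (splitting off the normal subgroup $U_1 \times \{1\} \times \cdots \times \{1\}$ of $\prod_{i} U_i$, whose quotient is $\prod_{i \geq 2} U_i$) yields $JH_s\bigl(\prod_{i=1}^r U_i\bigr) \subseteq \bigcup_{i=1}^r JH_s(U_i)$; equality holds since each $U_i$ is a quotient, hence a section, of the product. Finally, for the compositum $U$ itself: the injection $p \colon U \hookrightarrow \prod_{i=1}^r U_i$ together with monotonicity gives $JH_s(U) \subseteq JH_s\bigl(\prod_i U_i\bigr) = \bigcup_i JH_s(U_i)$, while each surjection $p_i \colon U \twoheadrightarrow U_i$ shows $U_i$ is a section of $U$, so $JH_s(U_i) \subseteq JH_s(U)$ for every $i$ and therefore $\bigcup_i JH_s(U_i) \subseteq JH_s(U)$. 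Combining the two inclusions proves the lemma.
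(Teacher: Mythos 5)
Your proof is correct, and it takes a genuinely different route from the paper's. The paper argues the reverse inclusion directly on the level of the surjections $p_i$: given a simple section $S\cong V_2/V_1$ of $U$, it claims that if $p_i(V_1)=p_i(V_2)$ for every $i$ then $V_1=V_2$, so some $p_i$ must separate them, and then $p_i(V_2)/p_i(V_1)$ is a nontrivial quotient of the simple group $S$, hence a copy of $S$ inside $U_i$. You instead embed $U$ into $\prod_i U_i$ and reduce the whole statement to two closure properties of $JH_s$: monotonicity under sections, and the splitting $JH_s(H)\subseteq JH_s(N)\cup JH_s(H/N)$ for $N\trianglelefteq H$, which you establish by the standard Dedekind/second-isomorphism-theorem manipulation of $V_1(V_2\cap N)$; an induction on $r$ then finishes the product case, and the compositum case follows from the injection and the surjections. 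Your route is longer but airtight, and it actually buys something: the paper's intermediate assertion is false as literally stated (take $U=\Z/2\Z\times\Z/2\Z$ as the compositum of its two factors, $V_2=U$ and $V_1$ the diagonal subgroup; the images of $V_1$ and $V_2$ coincide in both factors although $V_1\neq V_2$). The lemma survives because the simple group in question is still realized as a section of some $U_i$, just not necessarily via the images of the given $V_1\subset V_2$ --- and that is precisely what your argument proves. So your proposal not only differs from the paper's proof but, strictly speaking, repairs it.
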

\begin{proof} Since $U$ surjects onto each $U_i$ by definition of a
  compositum of groups,  we have 
$\bigcup_{i=1,\cdots,r}JH_s(U_i)\subset
  JH_s(U)$. 

Conversely suppose there exists subgroups $V_1\subset V_2\subset U$,
with $V_1$ normal in $V_2$ and $V_2/V_1\simeq S$. If for all
projections $U\to U_i$, the images of $V_1$ and $V_2$ coincide in
$U_i$, then $V_1=V_2$. Hence there exists some $i$, for which the
images of $V_1$ and $V_2$ are not equal. This implies that $S\in
JH_s(U_i)$ and this gives us the reverse inclusion. 
\end{proof}

\begin{corollary}\label{jh}
With the notation of Theorem \ref{nonquotient}, suppose 
that  $JH_s(W_1)$ is not a subset of  $JH_s(W_2)$.  
Then 
$\sF_1\sF_2$ is of infinite degree over $\sF_2$. 
\end{corollary}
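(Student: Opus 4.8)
The plan is to argue by contradiction, re-running the reduction already performed in the proof of Proposition~\ref{surjcrit}. Suppose $\sF_1\sF_2$ is a finite extension of $\sF_2$. As in that proof, pick a finite Galois extension $L$ of $K$ with $\sF_1\sF_2\subset L\sF_2$ over which both $G_1$ and $G_2$ are split; this yields a surjection $G(L\sF_2/L)\twoheadrightarrow G(L\sF_1/L)$, and, invoking Theorem~\ref{prthm} to obtain a generic $L$-regular element $\gamma\in\Gamma_1$ together with \cite[Lemma~4.1]{PRwc}, a further surjection $G(L\sF_1/L)\twoheadrightarrow W_1$. Writing $\sF_2$ as a compositum of finitely many $K_{\eta}$ with $\eta\in\Gamma_2$ generic $K$-regular, the field $L\sF_2$ is the compositum over $L$ of the $LK_{\eta}$, and, exactly as in Proposition~\ref{surjcrit}, each $G(LK_{\eta}/L)$ is isomorphic to a normal subgroup of $W_2$. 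Thus $U:=G(L\sF_2/L)$ is a compositum of groups each isomorphic to a subgroup of $W_2$, and $W_1$ is a quotient of $U$.

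Next I would apply the Lemma on simple components of a compositum: $JH_s(U)=\bigcup_{\eta}JH_s(G(LK_{\eta}/L))$. Since every simple subquotient of a subgroup $N$ of $W_2$ is a simple subquotient of $W_2$, we have $JH_s(G(LK_{\eta}/L))\subset JH_s(W_2)$ for each $\eta$, hence $JH_s(U)\subset JH_s(W_2)$. Moreover, a simple subquotient of any quotient of $U$ pulls back to a simple subquotient of $U$, so from the surjection $U\twoheadrightarrow W_1$ we get $JH_s(W_1)\subset JH_s(U)$. Combining the two inclusions gives $JH_s(W_1)\subset JH_s(W_2)$, contradicting the hypothesis; therefore $\sF_1\sF_2$ is of infinite degree over $\sF_2$.

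Essentially all of the substance is inherited from Proposition~\ref{surjcrit}, so I do not anticipate a real obstacle. The only items requiring a sentence of justification are the two monotonicity properties of $JH_s$ used above (under passage to subgroups and to quotients), each being the routine ``pull back a composition series'' argument, and the identification of $G(LK_{\eta}/L)$ with a (normal) subgroup of $W_2$ — this is verbatim the corresponding step of Proposition~\ref{surjcrit}, combining \cite[Lemma~4.1]{PRwc} with the fact that $L/K$ Galois forces $K_{\eta}\cap L$ to be Galois over $K$, so that $G(LK_{\eta}/L)=G(K_{\eta}/K_{\eta}\cap L)$ is normal in $G(K_{\eta}/K)$, which contains $W_2$ — and so may simply be cited.
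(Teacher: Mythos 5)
Your argument is correct and is exactly the paper's intended derivation: Corollary \ref{jh} is meant to follow immediately by combining the lemma on simple components of a compositum with the reduction in Proposition \ref{surjcrit}, using the routine monotonicity of $JH_s$ under subgroups and quotients. No issues.
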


\subsection{Simple components of Weyl groups}
From the explicit knowledge of Weyl groups of a simple root system,
the Weyl groups of the simple root systems  $\bA_{n-1}, ~\bB_n, ~\bD_n$
for $n\geq 5$ have the 
simple groups $\Z/2\Z$ and $A_n$ as the simple Jordan-Holder
components, 
where $A_n$ is the alternating group on $n$-symbols. We will
refer to these groups as JH-type
$\A_n$. 

Using the notation of the ATLAS, the Weyl groups of the 
exceptional groups $\bE_6$ (resp. $ \bE_7$, $\bE_8$)
 have $U_4(2)$ (resp. $ Sp_6(2), 
~\Omega_8^+(2)$) and $\Z/2\Z$ as the simple 
components. These will be called as groups of JH-type $\E$.  

The Weyl groups of the root systems $\bA_n ~(n=2,3), ~\bB_n/\bC_n ~(n=3,4),
~\bD_n~(n=4), ~\bF_4, ~\bG_2$ have $\Z/2\Z$ and $\Z/3\Z$ 
as the simple components. We will refer to this as
abelian of JH-type $\Z/3\Z$. 

Finally, $\Z/2\Z$ is the only simple component for the Weyl groups of
the root systems $\bA_1, ~\bB_2/\bC_2$. 
These will be referred to as of JH-type  $\Z/2\Z$. 

Hence we obtain the following corollary (we have assumed that $G_1$
and $G_2$ are not isogenous): 
\begin{corollary}\label{nacor}
With the above notation, suppose $G_1$ is of JH-type either
$\A_n$ or $\E$
and  $G_2$ is of JH-type either $\A_m$ with $n\neq
m$, $\E$ or $\Z/3\Z$. Assume that the groups $G_1$ and $G_2$ 
 are not isogenous over $\bar{K}$ (required when both are of $\E$-type).  Then   $\sF_1\sF_2$ is of
infinite degree over $\sF_1$ and $\sF_2$.
\end{corollary}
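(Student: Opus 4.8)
The plan is to deduce the statement directly from Corollary \ref{jh}, applied twice (once in each direction). Recall from the description of the JH-types above that for every group occurring here the set $JH_s(W)$ of simple Jordan-Holder components has the form $\{\Z/2\Z, N\}$, where $N = A_n$ if $G$ is of JH-type $\A_n$ (so $n\geq 5$); $N$ is one of $U_4(2)$, $Sp_6(2)$, $\Omega_8^+(2)$ if $G$ is of JH-type $\E$, according as $G$ is of type $\bE_6$, $\bE_7$ or $\bE_8$; and $N = \Z/3\Z$ if $G$ is of JH-type $\Z/3\Z$. Write $N_1$, $N_2$ for the components so attached to $W_1$, $W_2$. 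Since $\Z/2\Z$ belongs to every such set, the inclusion $JH_s(W_1)\subset JH_s(W_2)$ fails as soon as $N_1\notin JH_s(W_2)$, and symmetrically for the reverse inclusion.

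First I would verify, running through the configurations permitted by the hypotheses, that $N_1\not\simeq N_2$. If $N_1 = A_n$ and $N_2 = A_m$, then $n\neq m$ with $n, m\geq 5$, so $A_n\not\simeq A_m$. If one of $N_1$, $N_2$ is an alternating group $A_n$ ($n\geq 5$) and the other is one of $U_4(2)$, $Sp_6(2)$, $\Omega_8^+(2)$, then they are not isomorphic, already by comparison of orders (or by the known list of exceptional isomorphisms among finite simple groups). If one of $N_1$, $N_2$ equals $\Z/3\Z$, then the hypotheses force the other to be non-abelian, so again $N_1\not\simeq N_2$. Finally, if $N_1$ and $N_2$ both lie in $\{U_4(2), Sp_6(2), \Omega_8^+(2)\}$, then $G_1$ and $G_2$ are both of $\E$-type; since they are assumed not isogenous over $\bar{K}$, they are of two distinct types among $\bE_6$, $\bE_7$, $\bE_8$, and as $U_4(2)$, $Sp_6(2)$, $\Omega_8^+(2)$ are pairwise non-isomorphic it follows that $N_1\not\simeq N_2$. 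Hence $N_1\not\simeq N_2$ in every allowed case. Since moreover $N_1$ is non-abelian (because $G_1$ is of JH-type $\A_n$ or $\E$), while $N_2\neq \Z/2\Z$, we get $N_1\notin\{\Z/2\Z, N_2\} = JH_s(W_2)$ and, likewise, $N_2\notin JH_s(W_1)$.

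Therefore $JH_s(W_1)\not\subset JH_s(W_2)$ and $JH_s(W_2)\not\subset JH_s(W_1)$. By Corollary \ref{jh}, $\sF_1\sF_2$ is of infinite degree over $\sF_2$; and, by Corollary \ref{jh} with the indices $1$ and $2$ interchanged, $\sF_1\sF_2$ is of infinite degree over $\sF_1$ as well. This is exactly the assertion to be proved. I expect no serious obstacle: the argument is a finite case analysis, the only inputs beyond Corollary \ref{jh} and the already-computed Jordan-Holder components being the elementary non-isomorphisms recorded above. The one point worth noting is that the non-isogeny hypothesis is used, and is genuinely needed, only to separate the two exceptional components in the case when $G_1$ and $G_2$ are both of $\E$-type.
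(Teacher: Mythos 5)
Your proof is correct and is essentially the paper's own (the paper states Corollary \ref{nacor} as an immediate consequence of Corollary \ref{jh} and the tabulated JH-types, which is exactly the two-directional case check you carry out, including the observation that non-isogeny is needed only to separate $U_4(2)$, $Sp_6(2)$, $\Omega_8^+(2)$ when both groups are of $\E$-type). The only caveat, inherited from the paper rather than introduced by you, is that the identity $JH_s(W)=\{\Z/2\Z,N\}$ holds for composition factors but not for the paper's literal definition of ``component'' as an arbitrary simple subquotient (under which, e.g., $\Z/3\Z\in JH_s(S_n)$ for $n\geq 3$), so one must read $JH_s$ consistently as the set of composition factors throughout.
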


\begin{corollary}\label{a1cor}
With the above notation, suppose $G_1$ is not of JH-type $\Z/2\Z$, and 
$G_2$ is  of JH-type $\Z/2\Z$. Then   $\sF_1\sF_2$ is of
infinite degree over $\sF_2$.
\end{corollary}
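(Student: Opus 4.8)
The plan is to read the conclusion straight off the classification of Jordan--H\"older components of Weyl groups established in the preceding subsection, and then invoke Corollary \ref{jh}. First I would record what the hypothesis on $G_2$ says: $G_2$ being of JH-type $\Z/2\Z$ means exactly that $JH_s(W_2)=\{\Z/2\Z\}$. Concretely, the root system of $G_2$ is $\bA_1$ or $\bB_2/\bC_2$, and $W(\bA_1)\simeq\Z/2\Z$ while $W(\bB_2)\simeq W(\bC_2)$ is the dihedral group of order $8$; both are $2$-groups, so their only simple subquotient is $\Z/2\Z$.

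Next I would use the hypothesis that $G_1$ is \emph{not} of JH-type $\Z/2\Z$ to produce a simple component of $W_1$ distinct from $\Z/2\Z$. According to the list established above, $W_1$ is then of JH-type $\A_n$ (so $A_n\in JH_s(W_1)$), or of JH-type $\E$ (so one of $U_4(2)$, $Sp_6(2)$, $\Omega_8^+(2)$ lies in $JH_s(W_1)$), or abelian of JH-type $\Z/3\Z$ (so $\Z/3\Z\in JH_s(W_1)$). In every case $JH_s(W_1)\not\subseteq\{\Z/2\Z\}=JH_s(W_2)$.

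With this failure of inclusion in hand, Corollary \ref{jh} applies verbatim and yields that $\sF_1\sF_2$ is of infinite degree over $\sF_2$, which is the assertion. I do not anticipate any genuine obstacle: the statement is essentially a bookkeeping consequence of the component computation, the only point to verify being the trivial fact that a Weyl group not of JH-type $\Z/2\Z$ possesses an odd-order simple subquotient, i.e.\ is not a $2$-group (which also follows from Cauchy's theorem together with the Killing--Cartan classification). Alternatively one could bypass Corollary \ref{jh} and argue directly through Proposition \ref{surjcrit}: every normal subgroup of the $2$-group $W_2$ is again a $2$-group, a compositum of $2$-groups is a $2$-group, and every quotient of a $2$-group is a $2$-group, so the non-$2$-group $W_1$ cannot be realised as a quotient of a compositum of normal subgroups of $W_2$.
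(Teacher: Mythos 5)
Your argument is exactly the paper's intended one: the corollary is read off from the classification of Jordan--H\"older components of Weyl groups ($JH_s(W_2)=\{\Z/2\Z\}$ since $W_2$ is a $2$-group, while $JH_s(W_1)$ contains a simple group other than $\Z/2\Z$) together with Corollary \ref{jh}. The proposal is correct and matches the paper's approach; your alternative direct argument via Proposition \ref{surjcrit} is a harmless reformulation of the same idea.
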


\subsection{JH type $\Z/2\Z$}
\begin{corollary} \label{jh2case}
Suppose $G_1$ is of type $\bB_2$ or $\bC_2$, and $G_2$
  is of type $\bA_1$. Then $\sF_1\sF_2$ is of
infinite degree over $\sF_2$. 
\end{corollary}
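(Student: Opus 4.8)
The plan is to reduce immediately to a statement about Weyl groups via Proposition \ref{surjcrit}. Here $W_1 \simeq W(\bB_2) \simeq W(\bC_2)$ and $W_2 \simeq W(\bA_1) \simeq \Z/2\Z$, so it suffices to check that $W(\bB_2)$ is not a quotient of any compositum of normal subgroups of $\Z/2\Z$. First I would record the one structural fact I need about $W(\bB_2)$: it is the dihedral group of order $8$, hence non-abelian --- for instance, from the description $W(\bB_2) \simeq (\Z/2\Z)^2 \rtimes S_2$, the conjugation action of $S_2$ on $(\Z/2\Z)^2$ is the nontrivial permutation action, so the semidirect product is not a direct product and contains an element of order $4$.

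Next I would unwind the definition of a compositum. Since $\Z/2\Z$ is simple, its only normal subgroups are the trivial group and $\Z/2\Z$ itself, so a compositum $U$ of such groups comes equipped with an injection $U \hookrightarrow \prod_i U_i$ into a product of copies of $\Z/2\Z$ (the trivial factors being irrelevant). Hence $U$ is a subgroup of an elementary abelian $2$-group, so $U$ is itself elementary abelian; in particular $U$ is abelian of exponent at most $2$. Every quotient of $U$ is then again abelian of exponent at most $2$, whereas $W(\bB_2)$ is non-abelian (equivalently, has exponent $4$). Therefore $W(\bB_2)$ cannot be a quotient of a compositum of normal subgroups of $\Z/2\Z$, and Proposition \ref{surjcrit} yields that $\sF_1\sF_2$ has infinite degree over $\sF_2$.

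There is essentially no obstacle here: this is the degenerate end of Theorem \ref{nonquotient}, and all the real work is already contained in the reduction of Proposition \ref{surjcrit}. The only point requiring a small amount of care is making sure the definition of compositum is read correctly --- that a compositum of copies of a fixed simple abelian group is again abelian --- after which the dichotomy abelian versus non-abelian finishes the argument.
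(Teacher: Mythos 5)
Your proposal is correct and is essentially the paper's own argument: the paper likewise observes that $W(\bB_2)$ is non-abelian while $W(\bA_1)$ is abelian, so no compositum of normal subgroups of $W_2$ can surject onto $W_1$, and then invokes Proposition \ref{surjcrit}. Your extra detail (that the compositum is elementary abelian of exponent $2$) just makes the same dichotomy explicit.
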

\begin{proof} The Weyl group of $G_1$ is non-abelian whereas that of
  $G_2$ is abelian. Hence Theorem \ref{nonquotient} is valid in this
  case, and the corollary follows from Proposition \ref{surjcrit}. 
\end{proof}

\subsection{Exponent of Weyl groups}
For a finite group $H$, let $e(H)$ denote the exponent, i.e., the
least natural number  $n$ such that $h^n=e$ for all elements $h\in
H$. 
\begin{lemma}\label{exponent}
 Suppose that the exponent of $W_1$ does not divide the
  exponent of $W_2$. Then Theorem \ref{nonquotient} 
  holds. In particular, $\sF_1\sF_2$ is of
infinite degree over $\sF_2$. 
\end{lemma}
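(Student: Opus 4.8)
The plan is to deduce the relevant half of Theorem~\ref{nonquotient} directly, in contrapositive form: assuming $W_1$ is a quotient of a compositum of normal subgroups of $W_2$, I will show $e(W_1)\mid e(W_2)$, contradicting the hypothesis that $e(W_1)$ does not divide $e(W_2)$. Once this is established, the assertion that $\sF_1\sF_2$ is of infinite degree over $\sF_2$ follows at once from Proposition~\ref{surjcrit}.

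The argument rests on three elementary monotonicity properties of the exponent, which I would record first. (i) If $N$ is a subgroup of a finite group $H$, then $e(N)\mid e(H)$, since each element of $N$ lies in $H$. (ii) If $H\twoheadrightarrow Q$ is surjective, then $e(Q)\mid e(H)$, because lifting $q\in Q$ to $h\in H$ gives $q^{e(H)}=\overline{h^{\,e(H)}}=1$. (iii) If a finite group $U$ embeds into a direct product $\prod_{i=1}^{r}U_i$, then $e(U)$ divides $\mathrm{lcm}_{1\le i\le r}\,e(U_i)$, since the image of any $u\in U$ is annihilated coordinatewise by that number and the embedding is injective.

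Now suppose, for contradiction, that $W_1$ is a quotient of a group $U$ which is a compositum of normal subgroups $N_1,\dots,N_r$ of $W_2$. By the definition of compositum, $U$ embeds into $\prod_{i=1}^{r}N_i$, so (iii) gives $e(U)\mid \mathrm{lcm}_i\,e(N_i)$. Each $N_i$ is a subgroup of $W_2$, so (i) gives $e(N_i)\mid e(W_2)$ for every $i$, whence $\mathrm{lcm}_i\,e(N_i)\mid e(W_2)$ and therefore $e(U)\mid e(W_2)$. Since $W_1$ is a quotient of $U$, (ii) yields $e(W_1)\mid e(U)\mid e(W_2)$, contradicting the hypothesis. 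Thus $W_1$ is not a quotient of a compositum of normal subgroups of $W_2$, which is precisely one of the two alternatives in the conclusion of Theorem~\ref{nonquotient}; in particular that theorem holds under the present hypothesis, and Proposition~\ref{surjcrit} then gives the final claim.

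There is essentially no obstacle here: the entire content is the bookkeeping of divisibilities. The one point that deserves care is to use the definition of ``compositum'' in its strong form, as an \emph{injection} of $U$ into $\prod_i U_i$ rather than merely the existence of the projections $p_i$, since it is exactly this injectivity that licenses property (iii).
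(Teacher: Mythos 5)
Your proposal is correct and follows essentially the same route as the paper: the paper likewise uses the embedding $U \subset \prod_{i=1}^r U_i \subset \prod_{i=1}^r W_2$ coming from the definition of compositum to conclude $e(U)\mid e(W_2)$, and then rules out a surjection $U\twoheadrightarrow W_1$. Your write-up merely makes the three divisibility monotonicity steps explicit.
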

\begin{proof} Suppose 
  $U$ is a compositum of groups 
 $U_i ~i=1,\cdots, r$, each of which is
  isomorphic to a normal subgroup of  $W_2$. By definition of the
  compositum, 
\[ U \subset \prod_{i=1}^r U_i \subset  \prod_{i=1}^rW_2.\]
Thus the exponent of  $U$ divides the exponent of
$W_2$. By hypothesis, there cannot be a surjection from  $U$
to $W_1$. Hence the lemma follows. 
\end{proof}

This lemma allows us to take care of a few more cases, especially
when both $G_1$ and $G_2$ are of JH-type  $\Z/3\Z$. From the
calculation of the exponent of
the Weyl groups of orthogonal groups given by  Lemma
\ref{exportho}, and the calculation of the exponent of $W(\bF_4)$ by
Lemma \ref{expf4},  we have the following: 
\begin{lemma}
\begin{enumerate}

\item The exponents of $W(\bA_2)$ and $W(\bG_2)$ is $6$. 

\item The exponents of the Weyl groups of rank $3$-simple Lie algebras
 (of Killing-Cartan type  $\bA_3, \bB_3$ or $\bC_3$)  is $12$. 

\item The exponent of $W(\bD_4)$ is $12$. 

\item The exponent of $W(\bB_4)$ and $W(\bF_4)$ is $24$.
\end{enumerate}
\end{lemma}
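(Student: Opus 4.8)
The plan is to read off each exponent directly from the explicit descriptions of the Weyl groups involved, using Lemma \ref{exportho} for the orthogonal types, Lemma \ref{expf4} for $\bF_4$, and the isomorphisms recorded in (\ref{weylisotrivial}) for $\bG_2$. The only arithmetic input I will need is the exponent of $S_n$ for $n=3,4$: since $S_3$ has elements of orders $1,2,3$ and $S_4$ has elements of orders $1,2,3,4$, we get $e(S_3)=6$ and $e(S_4)=12$.

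I would then dispatch the cases as follows. For type $\bA_{n-1}$ one has $W(\bA_{n-1})\simeq S_n$, so $e(W(\bA_2))=e(S_3)=6$ and $e(W(\bA_3))=e(S_4)=12$, giving the type-$\bA$ assertions in (1) and (2). For $\bG_2$, the isomorphism $W(\bG_2)\simeq W(\bA_2)\times \Z/2\Z$ of (\ref{weylisotrivial}) gives $e(W(\bG_2))=\operatorname{lcm}(e(S_3),2)=6$, completing (1). For the orthogonal types I use $W(\bB_n)\simeq W(\bC_n)$ together with Lemma \ref{exportho}: as $3$ is not a power of $2$, $e(W(\bB_3))=e(W(\bC_3))=2e(S_3)=12$, which completes (2); as $4=2^2$ is a power of $2$, $e(W(\bD_4))=e(S_4)=12$, which is (3). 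For $W(\bB_4)$ I invoke the last paragraph of the proof of Lemma \ref{exportho}: for the $4$-cycle $\sigma=(1\,2\,3\,4)$ and $\ve=(1,0,0,0)$ one computes $\ve_\sigma=(1,1,1,1)\neq 0$, so $(\ve,\sigma)$ has order $2^3=8$, whence $e(W(\bB_4))=2e(S_4)=24$. Finally $e(W(\bF_4))=24$ is exactly the content of Lemma \ref{expf4}, finishing (4).

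I do not expect any genuine obstacle: the statement is a compilation of facts that follow from structural results already established. The only point needing a little care is that the \emph{statement} of Lemma \ref{exportho} records the exponent of $W(\bB_n)$ only for $n$ not a power of $2$, so for $W(\bB_4)$ one must appeal to the explicit order-$8$ element built inside the proof of that lemma rather than to its statement; and, of course, item (4) also relies on the separate computation of $e(W(\bF_4))$ carried out in Lemma \ref{expf4}.
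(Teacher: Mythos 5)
Your proposal is correct and matches the paper's (implicit) argument: the paper states this lemma without proof, deducing it directly from Lemma \ref{exportho}, Lemma \ref{expf4}, and the standard facts $e(S_3)=6$, $e(S_4)=12$, exactly as you do. Your extra care about $W(\bB_4)$ — noting that the case $\bB_{2^l}$ is really settled by the order-$8$ element constructed in the last paragraph of the proof of Lemma \ref{exportho} rather than by a literal reading of its statement — is a sensible clarification and introduces no gap.
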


Consequently we have the following corollary:
\begin{corollary}\label{expcor}
With the above notation,   $\sF_1\sF_2$ is of
infinite degree over $\sF_2$ in the following cases: 
\begin{enumerate}

\item The rank of $G_2$ is two,  and rank of $G_1$ is greater than
  two. 

\item The rank of $G_2$ is three, and  rank of $G_1$ is greater than
  three, except when $G_1$ is of type $\bD_4$. 

\item $G_2$ is of type $\bD_4$ and $G_1$ is either of type  $\bB_4$ and
  $\bF_4$.
\end{enumerate}
\end{corollary}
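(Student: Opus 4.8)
The plan is to derive everything from Lemma~\ref{exponent}: in each of the three listed cases it suffices to check that the exponent $e(W_1)$ does not divide the exponent $e(W_2)$, and for this I will use the exponents recorded in the lemma immediately preceding this corollary (which rests on Lemma~\ref{exportho} and Lemma~\ref{expf4}), together with the elementary observation that $5$ divides $e(W)$ whenever the finite group $W$ has order divisible by $5$.

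First, in case (1) I would set aside the subcase where $G_2$ has type $\bB_2$ or $\bC_2$: there $W_2$ is of JH-type $\Z/2\Z$, whereas no $G_1$ of rank $\ge 3$ is of that type, so that instance is already handled by Corollary~\ref{a1cor}. In the remaining part of case (1), $G_2$ has type $\bA_2$ or $\bG_2$, so $e(W_2)=6$; in case (2), $G_2$ is one of $\bA_3,\bB_3,\bC_3$, so $e(W_2)=12$; in case (3), $e(W_2)=e(W(\bD_4))=12$. Since a positive integer exceeding $6$ cannot divide $6$, and since a multiple of $5$ divides neither $6$ nor $12$, it is enough to show, for the relevant $G_1$, that $e(W_1)\ge 12$ or that $5\mid e(W_1)$.

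For the low-rank types this is a direct look-up against the preceding lemma: $e(W(\bA_3))=e(W(\bB_3))=e(W(\bC_3))=12$, $e(W(\bD_4))=12$, and $e(W(\bB_4))=e(W(\bC_4))=e(W(\bF_4))=24$ (using that $\bB_4$ and $\bC_4$ have the same Weyl group). In case (1) this covers every $G_1$ of rank $3$ and every $G_1$ of rank $4$ of type $\bB_4,\bC_4,\bD_4,\bF_4$, all with $e(W_1)\ge 12>6$; in cases (2) and (3) it covers every $G_1$ of rank $4$ of type $\bB_4,\bC_4,\bF_4$, and $24\nmid 12$. Note that $\bD_4$ must be excluded from case (2) precisely because $e(W(\bD_4))=12=e(W_2)$, so that Lemma~\ref{exponent} carries no information there --- hence the separate treatment of that type elsewhere.

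What remains are the infinite families: $G_1$ of type $\bA_{m-1},\bB_m,\bC_m$ or $\bD_m$ with $m\ge 5$ (this also absorbs type $\bA_4$), and $G_1$ exceptional of type $\bE_6,\bE_7,\bE_8$. For the classical families, $W_1$ has $S_m$ as a quotient --- it equals $S_m$ for type $\bA_{m-1}$, and it is the quotient of $W_1$ by the abelian normal subgroup appearing in the exact sequences of Section~\ref{weylortho} for types $\bB_m,\bC_m,\bD_m$ --- so $5\mid e(S_m)\mid e(W_1)$ because $m\ge 5$. For the exceptional types, $5$ divides $|W_1|$, hence $5\mid e(W_1)$. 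In each remaining configuration $5$ divides $e(W_1)$ but not $e(W_2)\in\{6,12\}$, so Lemma~\ref{exponent} applies and the proof is complete. I expect this last paragraph, handling all the higher-rank and exceptional groups uniformly through divisibility by $5$, to be the only step that is not a mechanical check against the exponent table.
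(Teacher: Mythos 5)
Your proof is correct, and for the heart of the corollary --- the configurations where both $G_1$ and $G_2$ are of JH-type $\Z/3\Z$ --- it is exactly the paper's argument: read off the exponents from Lemma \ref{exportho} and Lemma \ref{expf4} and apply Lemma \ref{exponent}, noting (as you do) that $\bD_4$ must be excepted in case (2) because $e(W(\bD_4))=12=e(W_2)$ there. Where you diverge is in how you dispose of the peripheral instances that the literal statement of the corollary also covers. The paper never extends the exponent table beyond rank $4$; it implicitly relies on the fact that every $G_1$ of rank $>2$ whose Weyl group is not of JH-type $\Z/3\Z$ is of JH-type $\A_m$ or $\E$, so those pairings are already settled by the Jordan--H\"older component argument of Corollary \ref{nacor} (and the $\bB_2/\bC_2$ subcase of (1) by Corollary \ref{a1cor}, just as you say). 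You instead make the exponent criterion carry the whole load, via the observation that $5$ divides $e(W_1)$ for all classical types of rank $\geq 4$ in the $\bA$-series and rank $\geq 5$ otherwise (Cauchy plus the surjection $W_1\to S_m$) and for $\bE_6,\bE_7,\bE_8$, while $5\nmid 6,12$. This buys a self-contained, uniform proof of the corollary as stated without invoking Corollary \ref{nacor}; the paper's route keeps the exponent lemma minimal and leans on the earlier reductions. Both are valid, and your explicit accounting is if anything more complete than what the paper writes down.
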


\begin{remark} It follows from Lemma \ref{exportho}, that Theorem
  \ref{maintheorem} holds when $G_1$ is of type $W(\bB_{2^l})$ and $G_2$
  is of type $W(\bD_{2^l})$, $l \geq 2$.
\end{remark} 

 \subsection{JH-type $\Z/3\Z$}
We now prove the theorem when $G_1$ is of type $\bD_4$ and $G_2$ is of
rank $3$. The group $G_2$ is of type either $\bA_3, \bB_3$ or $\bC_3$.
The Weyl group $W_2$ of $G_2$ is isomorphic to either $S_4$ or
$S_4\times \Z/2\Z~(\simeq W(B_3))$.  

For a group $H$, let $H^{(r)}$ denote the $r$-th derived subgroup of $H$. 
We have, 
\[ S_4^{(1)}\simeq A_4, \quad   S_4^{(2)}\simeq K_4 \quad\mbox{and}
\quad S_4^{(3)}\simeq (1).\]
Suppose there is a surjection $G(L/K)\to W_1$. For each $t\geq 1$, the
$r$-th derived group $G(L/K)^{(r)}$ surjects onto $W_1^{(r)}$. 

The group $G(L/K)$
embeds into $\prod_{i=1}^rG(L_i/K)$, where each 
$G(L_i/K)$ is a normal subgroup of $W_2$.
Since $W_2^{(3)}$ is the trivial group, it follows that
$G(L/K)^{(3)}$ is trivial. 

The group $W(\bD_4)$ surjects onto $S_4$. Hence there is a surjection
$W(\bD_4)^{(2)}\to K_4$. 
Since $W(\bD_4)^{(2)}$ is a normal subgroup of  $W(\bD_4)$, 
by Lemma \ref{normal},
$W(\bD_4)^{(2)}$ is non-abelian. Hence  $W(\bD_4)^{(3)}$ is non-trivial,
and this proves Theorem \ref{nonquotient} in this case. 

\subsection{}
Corollaries \ref{nacor}, \ref{a1cor}, \ref{jh2case} 
 and \ref{expcor} prove Theorem
\ref{maintheorem} in many cases. The cases left in order to prove
Theorem \ref{maintheorem} are the following: 
\begin{enumerate}
\item  Both $G_1$ and $G_2$ are of JH-type   $\A_n$.

\item $G_1$ and $G_2$ are of type  $\bB_4$ or
  $\bF_4$.
\end{enumerate}

\section{A vs B and D}
We consider the case where $G_1$ is either of type $\bB_n$ or $\bD_n$ and
$G_2$ is of type $\bA_{n-1}$.

\begin{proposition}
In the notation of Theorem \ref{maintheorem}, 
let $G_1$ be of type  either $\bB_n/\bC_n$ or $\bD_n$ for $n\geq 3, n\neq
4$. Let $G_2$ be of type $\bA_{n-1}$. Then $\sF_1\sF_2$ is of
infinite degree over $\sF_2$.
\end{proposition}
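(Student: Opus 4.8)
The plan is to reduce, via Proposition~\ref{surjcrit}, to a purely group-theoretic statement and then play off the exponents of Weyl groups. Since $G_2$ is of type $\bA_{n-1}$ we have $W_2=S_n$, and because $n\neq 4$ the only normal subgroups of $S_n$ are $1$, $A_n$ and $S_n$, all of which sit inside $S_n$; hence any compositum of normal subgroups of $W_2$ embeds into $S_n^{\,r}$ for some $r$. So it suffices to rule out a surjection $\pi\colon U\twoheadrightarrow W_1$ with $U\leq S_n^{\,r}$, where $W_1$ is the Weyl group of $G_1$ (of type $\bB_n/\bC_n$ or $\bD_n$).

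First I would dispose of the easy cases by the exponent argument (Lemma~\ref{exponent}). By Lemma~\ref{exportho} the exponent $e(W_1)$ equals $2e(S_n)$ whenever $G_1$ is of type $\bB_n/\bC_n$, and also whenever $G_1$ is of type $\bD_n$ with $n$ not a power of $2$ (in particular this covers $n=3$). Since $U\leq S_n^{\,r}$ forces $e(U)\mid e(S_n^{\,r})=e(S_n)$, and $2e(S_n)\nmid e(S_n)$, there is no surjection $U\twoheadrightarrow W_1$ in these cases. The only case that survives is $G_1$ of type $\bD_n$ with $n=2^{l}$; as $n\neq 4$ and $n\geq 3$ this means $l\geq 3$, so $n\geq 8$, and here $e(W(\bD_n))=e(S_n)$, so the exponent argument gives nothing directly.

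For this remaining case I would pass to derived subgroups, where the obstruction reappears one level down. Writing $W(\bD_n)=V_e\rtimes S_n$ as in Section~\ref{weylortho}, the identity $[V_e,S_n]=V_e$ gives at once $W(\bD_n)'=V_e\rtimes A_n$. If $\pi\colon U\twoheadrightarrow W(\bD_n)$ with $U\leq S_n^{\,r}$, then $U'\leq (S_n^{\,r})'=A_n^{\,r}$ and $\pi(U')=W(\bD_n)'=V_e\rtimes A_n$, so $e(V_e\rtimes A_n)$ divides $e(U')$, which divides $e(A_n^{\,r})=e(A_n)$. But this is false: the only partition of $n=2^{l}$ containing a part divisible by $2^{l}$ is the $n$-cycle, which is an odd permutation, so $2^{l}\nmid e(A_n)$, while $2^{l-1}\mid e(A_n)$ (already realized by the element of cycle type $(2^{l-1},2,1,\dots,1)$, which lies in $A_n$); hence $v_2(e(A_n))=l-1$. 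On the other hand $V_e\rtimes A_n$ contains an element of order $2^{l}$: take $\sigma=(1\,2\,\cdots\,2^{l-1})(2^{l-1}{+}1,\ 2^{l-1}{+}2)\in A_n$, which has order $2^{l-1}$, and $\ve=e_1+e_n\in V_e$; since $l\geq 3$ the contributions of the transposition of $\sigma$ and of its fixed points vanish modulo $2$, so $(\ve,\sigma)^{2^{l-1}}=(e_1+\cdots+e_{2^{l-1}},1)\neq 1$ and $(\ve,\sigma)$ has order $2^{l}$. Thus $v_2\big(e(V_e\rtimes A_n)\big)\geq l>l-1=v_2(e(A_n))$, a contradiction.

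The computations involved — that $W(\bD_n)'=V_e\rtimes A_n$, that $v_2(e(A_{2^{l}}))=l-1$, and that the element $(\ve,\sigma)$ above has order exactly $2^{l}$ — are routine, and the construction of the last element is in the spirit of the proof of Lemma~\ref{exportho}. The one point I expect a reader to want spelled out, and the only genuinely load-bearing idea, is the passage to derived subgroups for the case $\bD_{2^{l}}$: although $e(W(\bD_{2^{l}}))=e(S_{2^{l}})$, so that Lemma~\ref{exponent} does not apply directly, the $2$-adic valuations satisfy $v_2\big(e(W(\bD_{2^{l}})')\big)=l>l-1=v_2\big(e(A_{2^{l}})\big)$, which restores the obstruction and forces $\sF_1\sF_2$ to be of infinite degree over $\sF_2$.
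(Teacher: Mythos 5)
Your proof is correct, but it follows a genuinely different route from the paper's. The paper argues structurally: writing each normal subgroup of $S_n$ as $1$, $A_n$ or $S_n$ (here $n\neq 4$ is used), it passes to the compositum $E$ of the quadratic subextensions, uses the simplicity of $A_n$ to identify $G(L/E)\simeq A_n^u$, and then invokes the classification of normal subgroups of $W_1$ (Lemma~\ref{normal}) to conclude that no normal subgroup of $W_1$ is a nontrivial quotient of $A_n^u$; hence any surjection onto $W_1$ would factor through the abelian group $G(E/K)$, which is absurd. You instead argue arithmetically via exponents: Lemma~\ref{exportho} kills every case except $\bD_{2^l}$ ($l\geq 3$), and for that case you restore the obstruction one level down by comparing $2$-adic valuations, using $W(\bD_n)'=V_e\rtimes A_n$, $U'\leq A_n^{\,r}$, the absence of elements of order $2^l$ in $A_{2^l}$ (the only candidate cycle type is the odd $n$-cycle), and your explicit element $(\ve,\sigma)$ of order $2^l$ in $V_e\rtimes A_n$ --- all of which check out, including the sign and order computations and the fact that $n=2^l$ is fixed by $\sigma$ when $l\geq 3$. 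What each approach buys: the paper's argument is uniform in $n$ and yields the stronger structural conclusion that the putative surjection factors through an abelian quotient, but it leans on Lemma~\ref{normal}; yours avoids the classification of normal subgroups of $W_1$ entirely (needing only the elementary identity $W(\bD_n)'=V_e\rtimes A_n$, which you should state with its one-line proof $[V_e,S_n]=V_e$), at the cost of a case split on whether $n$ is a power of $2$ and a more delicate exponent computation. Both are complete proofs of the proposition.
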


\begin{proof}
By Proposition \ref{surjcrit}, it is enough to show that
there is no surjective map from $N$ to  $W_1=W(G_1)$, where 
$N$ is the Galois group of a 
compositum of Galois extensions  $L_i/K$ for $i=1,\cdots, r$, and each
Galois group $G(L_i/K)$ is isomorphic to a normal subgroup of $S_n$. 

Upto reordering of the
indices, assume that  $G(L_i/K)\simeq S_n$ for $i=1, \cdots, s$
and $G(L_i/K)\simeq A_n$ for $i>s$. For $i=1, \cdots, s$,
let $E_i$ be the unique quadratic extension of $K$ contained inside
$L_i$, and $E$ be the compositum of the $E_i$. There is an exact
sequence, 
\[ 1\to G(L/E)\to G(L/K)\to G(E/K)\to 1, \]
where $G(E/K)\simeq (\Z/2\Z)^t$ for some $t\geq 0$. 
The group $G(L/E)$ is a compositum of the Galois groups $G(L_iE/E)$
each of which is isomorphic to the group $A_n$. Since
$A_n$ is simple, there are no proper Galois extensions of $E$
contained inside $L_iE$. Hence,  
\[ G(L/E)\simeq A_n^u\]
for some natural number $u\leq r$.

Suppose there is a surjection $\phi$ from $N$ to $W_1$. By Lemma
\ref{normal}, $W_1$ has no
normal subgroups isomorphic to $A_n$, the subgroup $G(L/E)$ of
$N$ lies in the kernel of $\phi$. This implies that the map $\phi$
factors via the abelian group $G(E/K)$, which is impossible as $W_1$
is non-abelian. 

Hence there does not exist any surjective homomorphism from $N$ to
$W_1$. By Proposition \ref{surjcrit}, the proposition is proved. 
\end{proof} 

\section{D vs B}
We consider the case where $G_1$ is either type $\bB_n,$ or $\bC_n$,  and
$G_2$ is of type $\bD_n$, and $n$ is even. 

Let $N$ be the Galois group of a 
compositum $L$ of Galois extensions  $L_i/K$ for $i=1,\cdots, r$, with
each $G(L_i/K)$ a normal subgroup of $W_2=W(\bD_n)$. 
We need to show that there is no surjective
map from $N=G(L/K)$ to $W_1=W(\bB_n)$. 

The Galois group $G(L_i/K)$ is a  normal subgroup of $W_2$. There is
an extension, 
\begin{equation}\label{extension}
 1\to G(L_i/E_i)\to G(L_i/K)\to G(E_i/K)\to 1,
\end{equation}
where $G(L_i/E_i)$ is the maximal abelian normal subgroup of
$G(L_i/K)$ and $G(E_i/K)$ is isomorphic to either the trivial, or the
alternating group $A_n$ or the symmetric group $S_n$ on
$n$-symbols. Let $E$ be the compositum of the fields $E_i$. 

Over $E$, the field $L$ can be considered as the compositum of the abelian
extensions $L_iE$ of $E$. Hence $G(L/E)$ is abelian and there is an
exact sequence, 
\begin{equation}\label{extension1}
 1\to G(L/E)\to G(L/K)\to G(E/K)\to 1.
\end{equation}
This induces an action of $G(E/K)$ on $G(L/E)$. 
Since $E/K$ is a compositum of Galois extensions of $K$, 
there is a natural inclusion of the Galois groups, 
\begin{equation}\label{comptoprod1}
G(E/K) \subset \prod_{i=1}^rG(E_i/K), 
\end{equation}\label{comptoprod2}
Similarly, the Galois group $G(L/E)$ is contained in a product of groups, 
\begin{equation}
G(L/E)\subset \prod_{i=1}^rG(L_iE/E)\simeq  \prod_{i=1}^rG(L_i/L_i\cap
E)\subset \prod_{i=1}^rG(L_i/E_i), 
\end{equation}
where the equality arises from restriction. 

\begin{lemma}\label{compaction}
The  action of $G(E/K)$ on $G(L/E)$ defined by the extension Equation
(\ref{extension1}) is compatible via the inclusions defined by
Equations (\ref{comptoprod1}) and (\ref{comptoprod2}), 
with the component wise action of 
$ \prod_{i=1}^rG(E_i/K)$ on  $\prod_{i=1}^rG(L_i/E_i)$.
\end{lemma}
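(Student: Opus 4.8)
The plan is to recognize Lemma~\ref{compaction} as an instance of the functoriality of the conjugation action of a quotient on an abelian normal subgroup. Recall the elementary fact: given two group extensions $1\to A\to G\to Q\to 1$ and $1\to A'\to G'\to Q'\to 1$ with $A,A'$ abelian, and a morphism $(\alpha,\beta,\gamma)$ between them ($\alpha\colon A\to A'$, $\beta\colon G\to G'$, $\gamma\colon Q\to Q'$, the two evident squares commuting), the map $\alpha$ intertwines the conjugation actions, i.e. $\alpha(q\cdot a)=\gamma(q)\cdot\alpha(a)$ for all $q\in Q$ and $a\in A$. Indeed, choosing a lift $g\in G$ of $q$, the element $\beta(g)$ lifts $\gamma(q)$ and $\alpha(gag^{-1})=\beta(g)\,\alpha(a)\,\beta(g)^{-1}$, which is well defined modulo the abelian group $A'$.

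So the first step is to exhibit the morphism of extensions to which this applies: it runs from the sequence (\ref{extension1}) to the product over $i$ of the sequences (\ref{extension}), its three vertical arrows being the restriction embeddings $\alpha\colon G(L/E)\hookrightarrow\prod_i G(L_i/E_i)$, $\beta\colon G(L/K)\hookrightarrow\prod_i G(L_i/K)$ (injective since $L$ is the compositum of the $L_i$, so an automorphism of $L/K$ is determined by its restrictions), and $\gamma\colon G(E/K)\hookrightarrow\prod_i G(E_i/K)$, the last being exactly the inclusion (\ref{comptoprod1}). Here the one thing needing care is that $\alpha$ really is the embedding used in (\ref{comptoprod2}): there $G(L/E)$ is embedded via $\prod_i G(L_iE/E)$, the isomorphisms $G(L_iE/E)\xrightarrow{\sim}G(L_i/L_i\cap E)$, and the inclusions $G(L_i/L_i\cap E)\subseteq G(L_i/E_i)$ (legitimate because $E_i\subseteq L_i\cap E$); tracing an element through shows the composite is simply $g\mapsto(g|_{L_i})_i$, and $g|_{L_i}$ does fix $E_i$ since $g$ fixes $E\supseteq E_i$. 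Granting this, the two squares commute by functoriality of restriction, the bottom row is exact as a product of short exact sequences, and its kernel term $\prod_i G(L_i/E_i)$ is abelian, each $G(L_i/E_i)$ being by construction the maximal abelian normal subgroup of $G(L_i/K)$.

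Applying the functoriality recalled in the first paragraph then yields that $\alpha$ is equivariant for the action of $G(E/K)$ (viewed inside $\prod_i G(E_i/K)$ via (\ref{comptoprod1})) and the conjugation action of $\prod_i G(E_i/K)$ on $\prod_i G(L_i/E_i)$ attached to the product extension. To finish, one notes that this last action is the componentwise one: a tuple $(\bar\sigma_i)_i$, lifted to $(\sigma_i)_i\in\prod_i G(L_i/K)$, carries $(\tau_i)_i$ to $(\sigma_i\tau_i\sigma_i^{-1})_i$, which is precisely the componentwise action in the statement of the lemma. This gives the asserted compatibility. The main obstacle is really just the bookkeeping of the second paragraph --- confirming that the composite embedding in (\ref{comptoprod2}) is genuinely coordinatewise restriction, so that $(\alpha,\beta,\gamma)$ forms a commutative morphism of extensions; once that is in place, everything else is formal.
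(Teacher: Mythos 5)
Your proof is correct and follows essentially the same route as the paper's: the paper also lifts $\sigma\in G(E/K)$ to some $\tilde\sigma\in G(L/K)$, computes $\sigma(\tau)=\tilde\sigma\tau\tilde\sigma^{-1}$ componentwise under restriction to $L_iE\subset L_i$, and notes that the resulting conjugation on $G(L_iE/E)\subset G(L_i/E_i)$ depends only on $\sigma|_{E_i}$ (which is exactly the well-definedness point you justify via abelianness of the kernel). Your morphism-of-extensions packaging and the explicit check that the composite in (\ref{comptoprod2}) is coordinatewise restriction just make explicit what the paper leaves terse.
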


\begin{proof} Let $\sigma\in G(E/K)$ and $\tau\in G(L/E)$. The action
  of $\sigma$ on $\tau$ is given by
  $\sigma(\tau)=\tilde{\sigma}\tau\tilde{\sigma}^{-1}$, where
  $\tilde{\sigma}$ is any extension of $\sigma$ to an automorpism of
  $L/K$. Then 
\[ \sigma(\tau)_i=\tilde{\sigma}_i~\tau_i~\tilde{\sigma}_i^{-1},\]
where the subscript index $i$ denotes  the restriction of the Galois
automorphisms to $L_iE$. Considering $G(L_iE/E)\subset G(L_i/E_i)$,
this action depends only on the restriction of $\sigma$ to
$E_i/K$, and this proves the lemma. 
\end{proof}

\subsection{Image of abelian part} In order to prove Theorem
\ref{nonquotient}, we first show that the image of the `abelian part'
$G(L/E)$ of $G(L/K)$ does not cover the abelian part $V\subset W_1$:
\begin{lemma}\label{abelianimage}
Suppose $\theta: G(L/K)\to W(\bB_n)$ is a surjective homomomorphism. Then 
$\theta(G(L/E))\subset V_e$, where $V_e$ is the `even' subspace of
$V=(\Z/2\Z)^n$ defined by Equations (\ref{bseq}) and (\ref{dseq}). 
\end{lemma}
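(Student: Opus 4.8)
The plan is to exploit the derived-series / commutator structure that distinguishes $W(\bB_n)$ from $W(\bD_n)$ when $n$ is even, combined with the compatibility of the $G(E/K)$-action established in Lemma \ref{compaction}. Recall the key arithmetic fact for $n$ even: the diagonal $\Delta$ sits \emph{inside} $V_e$ (Corollary \ref{product}(2)), and the image $\phi(V)=\Z/2\Z$ of the sign-sum map is \emph{not} hit by $V_e$. The heuristic is that $G(L/E)$ is abelian and, after being pushed into $W(\bB_n)$, its image is a normal-ish abelian piece that cannot surject onto the full $(\Z/2\Z)^n$; and the quotient $G(E/K)$, being built only from copies of $S_n$ and $A_n$ (the possible $G(E_i/K)$), has abelianization of exponent $2$ and no $A_n$-quotient obstruction — so the "odd" coordinate of $V$ must come from somewhere, and I want to show it cannot.

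Here are the steps in order. First I would set $A=\theta(G(L/E))\subset W(\bB_n)$; since $G(L/E)$ is abelian (it is a compositum over $E$ of abelian extensions $L_iE/E$, as noted before Equation (\ref{extension1})), $A$ is an abelian subgroup of $W(\bB_n)$, and it is normal in $\theta(G(L/K))=W(\bB_n)$ because $G(L/E)\triangleleft G(L/K)$. Second, I would invoke the classification of normal subgroups: $W(\bB_n)$ has a short list of abelian normal subgroups — among the subspaces of $V$, by the $S_n$-invariance forced by normality (the same argument as in Lemma \ref{invsub}, applied inside $W(\bB_n)$ rather than $W(\bD_n)$), the only abelian normal subgroups of $W(\bB_n)$ are $\{1\}$, $\Delta$, $V_e$, and $V$ itself; and $A$ cannot equal $V$ because $V$ is not a quotient of the abelian group $G(L/E)$ in the relevant structured way — this is where I must work. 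Third, to rule out $A=V$ I would use Lemma \ref{compaction}: under the inclusion $G(L/E)\hookrightarrow\prod_i G(L_i/E_i)$, the action of $G(E/K)$ is the componentwise one, and the composite $G(L/K)\to W(\bB_n)$ conjugation-action of $G(E/K)$ on $A$ must match the $S_n$-action (via $W(\bB_n)/V\cong S_n$ and the factorization of $G(E/K)$ through $\prod G(E_i/K)$ with each $G(E_i/K)\in\{1,A_n,S_n\}$). If $A=V\cong(\Z/2\Z)^n$ carried the standard $S_n$-action, then pulling back through the componentwise structure would force one of the $G(L_i/E_i)$ to already surject onto the standard permutation module $(\Z/2\Z)^n$ of $S_n$ as an $S_n$-module — but each $G(L_i/E_i)$ is the \emph{abelian} normal subgroup of a normal subgroup of $W(\bD_n)$, hence (by Lemma \ref{normal}) is a subspace of $V_e$, on which $\phi$ vanishes; so the image lands in $V_e$, not all of $V$. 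Running this component by component and taking the product, $\theta(G(L/E))$ lands in $V_e$.

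Concretely, the argument I would write: for each $i$, by Lemma \ref{normal} the abelian normal subgroup $G(L_i/E_i)$ of the normal subgroup $G(L_i/K)$ of $W(\bD_n)$ is contained in $V_e$ (it is one of $\{1\},\Delta,V_e$, all inside $V_e$); hence under Equation (\ref{comptoprod2}), $G(L/E)\subset\prod_i G(L_i/E_i)\subset\prod_i V_e\subset\prod_i V$. Now $\theta|_{G(L/E)}$ maps into the abelian normal subgroup $A$ of $W(\bB_n)$; I claim any homomorphism from a subgroup of $\prod_i V_e$ (with the componentwise $S_n$-action, equivariant for the $G(E/K)\to\prod_i G(E_i/K)\to\prod_i S_n$ action) into $V$, equivariant for the $W(\bB_n)/V\to S_n$ action, has image inside $V_e$ — because the obstruction to landing in $V_e$ is exactly the $S_n$-invariant functional $\phi\colon V\to\Z/2\Z$, which pulls back to the \emph{zero} functional on each $V_e$ component, hence to zero on $\prod_i V_e$, hence vanishes on the image. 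Therefore $\phi(\theta(G(L/E)))=0$, i.e. $\theta(G(L/E))\subset V_e$, which is exactly the assertion of Lemma \ref{abelianimage}.

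The main obstacle, and the step I would spend the most care on, is making the equivariance bookkeeping in the third step airtight: one must check that the $S_n$-action appearing on $A\subset V\subset W(\bB_n)$ via $W(\bB_n)/V\cong S_n$ is genuinely intertwined — through $\theta$ and through Lemma \ref{compaction} — with the componentwise $S_n$-action on $\prod_i V_e$, so that the $S_n$-equivariant functional $\phi$ really does pull back coordinate-wise. A subtlety is that $\theta$ need not send $V\subset W(\bD_n)$-related data to $V\subset W(\bB_n)$ on the nose; what saves us is that $\theta(G(L/E))$ is \emph{forced} to be abelian and normal in $W(\bB_n)$, so it lies in $V$ by the normal-subgroup classification of $W(\bB_n)$, and then the $G(E/K)=G(L/K)/G(L/E)$-action on it is the $S_n$-action, which is what we compare. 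Once the equivariance is pinned down, the vanishing of $\phi$ on the image is immediate from the fact that each $G(L_i/E_i)\subset V_e=\ker\phi$.
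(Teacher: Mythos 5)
Your opening reductions coincide with the paper's: $\theta(G(L/E))$ is an abelian normal subgroup of $W(\bB_n)$, hence lies in $V$, and being $S_n$-stable it is one of $\{1\}$, $\Delta$, $V_e$, $V$ by Lemma \ref{invsub}; so the entire content of the lemma is to rule out $\theta(G(L/E))=V$. It is exactly there that your argument has a gap. The assertion that $\phi$ ``pulls back to the zero functional on each $V_e$ component, hence vanishes on the image'' does not follow from $G(L_i/E_i)\subset V_e=\ker\phi$: the composite $\phi\circ\theta$ is merely \emph{some} $G(E/K)$-equivariant homomorphism from the submodule $G(L/E)\subset \prod_i G(L_i/E_i)$ to the trivial module $\Z/2\Z$, and such homomorphisms need not vanish, because $G(L/E)$ has nontrivial trivial subquotients (the components with $G(E_i/K)$ trivial, coming from abelian $L_i$, and the intersection with the diagonal $\Delta^s$). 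Similarly, the claim that $\theta(G(L/E))=V$ ``would force one of the $G(L_i/E_i)$ to surject onto the permutation module'' is unjustified: a quotient of a submodule of a product need not be a quotient of any single factor, and a priori $V$ could be assembled from a copy of $V_e/\Delta$ in one factor together with trivial pieces from others. A telling symptom: your justification of the key claim nowhere uses that $n$ is even, yet the claim is false for $n$ odd, where $V\simeq V_e\oplus\Delta$ as $S_n$-modules is genuinely an equivariant quotient of $V_e\times V_e$ (second factor with trivial action) — consistent with the fact that the lemma itself fails for odd $n$.

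What is needed, and what the paper supplies, has two inputs. First, the trivial part of $G(L/E)$ (its intersection with $\Delta^s$, and the trivially-acted-on components) maps into $V^{S_n}=\Delta$, which for $n$ even is contained in $V_e$ — this is where parity enters essentially. Second, the quotient of $G(L/E)$ by that trivial submodule embeds in the semisimple module $(V_e/\Delta)^{s}\times(\Z/2\Z)^{t}$, hence is itself semisimple, so its image in $V/\Delta$ is a semisimple submodule with simple summands of cardinality at most $2^{n-2}$ and therefore lands in $V_e/\Delta$; equivalently, $V/\Delta$ is a non-split extension of the trivial module by $V_e/\Delta$ and so cannot be a quotient of a semisimple module. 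Your functional $\phi$ does detect the right obstruction, but showing it vanishes on the image requires precisely this semisimplicity (Loewy-length) analysis; it is not automatic from the inclusion of the source in $\prod_i V_e$.
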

\begin{proof}
Since $G(L/E)$ is an abelian normal subgroup of $G(L/K)$, the  
image $\theta(G(L/E))$ is contained inside $V$, the maximal abelian
normal subgroup of $W(\bB_n)$. Let $\bar{\theta}$ denote the induced,
surjective map
from $G(E/K)\to S_n$. 

Let the indexing be such that for $i=1,\cdots, s$, the extension
$E_i/K$ is non-trivial,  and for $s<i\leq r$ the extension $E_i/K$ is
trivial. This is the same as saying that for $i\leq s$ (resp. $i>s$), 
the extension $L_i/K$ is non-abelian (resp. nonabelian).  The $G(E/K)$-module
$G(L_iE/E)$ can be considered as a submodule of $G(L_i/E_i)\subset V_e$ 
with respect to the projection $G(E/K)\to G(E_i/K)$.  

For $i\leq s$, by Lemma \ref{normal} on  the
classification of normal subgroups of $W(\bD_n)$,  $G(L_i/E_i)\simeq V_e$, this 
as $G(E_i/K)$-module. We continue to denote by $V_e$ the isomorphism
class of $G(L_i/E_i)$ as $G(E/K)$-module.  

For $i\geq s$, since
$G(E_i/K)$ is trivial, the group $G(E/K)$ acts trivially on
$G(L_iE/E)$. 
In particular, $G(L_iE/E)$ is a direct sum of $\Z/2\Z$-groups with
trivial $G(E/K)$-action. Thus, 
\[ \prod_{i=1}^rG(L_iE/E)\simeq V_e^s\times
\prod_{i>s}^rG(L_iE/E)\simeq  V_e^s\times (\Z/2\Z)^t,\]
as $G(E/K)$-modules for some $t\geq 0$.
 
For $i\geq s$, the quotient
module $V_e/\Delta$ is a simple $G(E/K)$-module, since it is simple as
$A_n$ or $S_n$-module. Hence the quotient $G(E/K)$-module   
\[ \left(\prod_{i=1}^rG(L_iE/E)\right)/\Delta^s\simeq  (V_e/\Delta)^s\times
(\Z/2\Z)^t,\]
is a semisimple $G(E/K)$-module. 

Consider the $G(E/K)$-module $G(L/E)\cap \Delta^s$. Since this is
trivial as a $G(E/K)$-module, and $\bar{\theta}$ is a surjection onto
$S_n$,  the image $\theta(G(L/E)\cap
\Delta^s)\subset V$ is trivial as a $S_n$-module. It follows that  
\[\theta(G(L/E)\cap \Delta^s)\subset \Delta.\]
Now the  $G(E/K)$-module $G(L/E)/(G(L/E)\cap \Delta^s)$ is a submodule
of the semisimple $G(E/K)$-module $(V_e/\Delta)^s\times
(\Z/2\Z)^t$. This implies that it is semisimple, and there are numbers
$s', ~t'\geq 0$ such that  
\[G(L/E)/(G(L/E)\cap \Delta^s)\simeq  (V_e/\Delta)^{s'}\times
(\Z/2\Z)^{t'},\]
as a $G(E/K)$-module. Each of the $G(E/K)$-summands on the right is of
cardinality at most $2^{n-2}$. 

Again by the surjectivity $\bar{\theta}: G(E/K)\to S_n$, 
the image  
$$\theta(G(L/E)/(G(L/E)\cap \Delta^s))\subset V/\Delta,$$
 is a sum of $S_n$-modules, each of which has cardinality at
most  $2^{n-2}$. By Lemma \ref{invsub}, the image of each irreducible
summand is contained inside $V_e/\Delta$. Hence the image 
 $\theta(G(L/E))$ is contained inside $V_e$, and this proves the
 lemma. 
\end{proof}

\subsection{A splitting property} Although each of the groups
$G(L_i/K)$ is a semidirect product of $G(E_i/K)$ by $G(L_i/E_i)$, it
is not clear that $G(L/K)$  is a semidirect product of $G(E/K)$ by
$G(L/E)$. The following lemma serves as a replacement for this
property. 
\begin{lemma} \label{sd}
Given the extension, 
\[
 1\to G(L/E)\to G(L/K)\to G(E/K)\to 1,\]
there is a group $Q$ satisfying the following: 

\begin{enumerate}
\item $Q$ is obtained from $G(E/K)$ recursively by a sequence of
  central extensions with kernel $\Z/2\Z$, i.e., there is a sequence
  of maps 
$$Q=Q_k\to Q_{k-1}\to\cdots \to Q_0=G(E/K)$$
such that  for $i=0, \cdots, k-1$, there is a central extension
\[ 0\to \Z/2\Z \rightarrow Q_{i+1} \rightarrow Q_i\to 1.\]

\item $Q$ splits the extension given by Equation (\ref{extension1})
\[
 1\to G(L/E)\rightarrow G(L/K)\rightarrow G(E/K)\to 1, \]
i.e., there is a map $s: Q\to G(L/K)$ such that composed with the
projection to $G(E/K)$ is the map $\pi:Q\to Q_0=G(E/K)$. 
\end{enumerate}
\end{lemma}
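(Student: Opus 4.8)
The plan is to restate the lemma group-theoretically and to exploit that the kernel $A:=G(L/E)$ of the extension $\mathcal E\colon 1\to A\to G(L/K)\to G(E/K)\to 1$ is an elementary abelian $2$-group: by Lemma~\ref{normal} every abelian normal subgroup of $W(\bD_n)$ lies in $V_e\cong\F_2^{n-1}$, hence so does each $G(L_iE/E)\subseteq G(L_i/E_i)$, and therefore so does $A\subseteq\prod_i G(L_iE/E)$. The whole argument I would run inside $\tilde G:=\prod_{i=1}^r G(L_i/K)$, into which $G:=G(L/K)$ embeds by restriction. By Lemma~\ref{normal} each $G(L_i/K)$ is one of $\{1\},\Delta,V_e,V_e\rtimes A_n,V_e\rtimes S_n$ (or $V_e\rtimes K_4$ when $n=4$); in every case it is a semidirect product $G(L_i/E_i)\rtimes G(E_i/K)$, and fixing such a splitting for each $i$ gives $\tilde G=\tilde A\rtimes\tilde S$ with $\tilde A:=\prod_i G(L_i/E_i)$ and $\tilde S:=\prod_i s_i(G(E_i/K))$. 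I write $q\colon\tilde G\to\tilde S$ for the projection; its restriction to $G$ has kernel $G\cap\tilde A=A$ and image a copy of $P:=G(E/K)$ sitting inside $\tilde S$.

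First I would pass to a "semisimplified" picture. Let $\Delta_i\subseteq V_e$ be the diagonal $\Z/2\Z$ (normal in $G(L_i/K)$, being central in $V_e$ and stable under the complement), put $\tilde\Delta:=\prod_i\Delta_i\triangleleft\tilde G$, and set $D:=G\cap\tilde\Delta=A\cap\tilde\Delta$. Since each $\Delta_i$ is fixed pointwise by $S_n$, $D$ is a trivial $P$-submodule of $A$, hence central in $G(L/K)$, and it is elementary abelian. In $\tilde G/\tilde\Delta=(\tilde A/\tilde\Delta)\rtimes\tilde S$ the abelian part $\tilde A/\tilde\Delta$ is a \emph{semisimple} $P$-module: the contributing summands are trivial modules and the module $V_e/\Delta$, which is simple --- the $P$-action on it factors through the surjection $P\to G(E_i/K)$, so its $P$-submodules are its $G(E_i/K)$-submodules, and by Lemma~\ref{invsub} there is nothing strictly between $\Delta$ and $V_e$. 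Because $G\cap\tilde\Delta=D$, the group $G/D$ embeds in $\tilde G/\tilde\Delta$, realizing the extension $\bar{\mathcal E}\colon 1\to A/D\to G/D\to P\to 1$ as a subgroup of a semidirect product; and $A/D$, being a submodule of the semisimple $P$-module $\tilde A/\tilde\Delta$, is a direct summand of it.

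The heart of the proof --- and the step I expect to be the main obstacle --- is to show that $\bar{\mathcal E}$ splits, and I would argue cohomologically. Pushing $\bar{\mathcal E}$ forward along the $P$-module inclusion $A/D\hookrightarrow\tilde A/\tilde\Delta$ produces the extension $1\to\tilde A/\tilde\Delta\to (G/D)\cdot(\tilde A/\tilde\Delta)\to P\to 1$, computed inside $\tilde G/\tilde\Delta$. This one splits: the copy of $P$ sitting inside $\tilde S$ already lies in $(G/D)\cdot(\tilde A/\tilde\Delta)$ --- for $\sigma\in q(G/D)$, writing a preimage of $\sigma$ in $G/D$ as $\bar a\,\sigma$ with $\bar a\in\tilde A/\tilde\Delta$ displays $\sigma$ as a product of an element of $\tilde A/\tilde\Delta$ with one of $G/D$ --- and it meets $\tilde A/\tilde\Delta$ trivially, so it is a complement. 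Since $A/D$ is a direct summand of $\tilde A/\tilde\Delta$ as $P$-modules, $H^2(P,A/D)\to H^2(P,\tilde A/\tilde\Delta)$ is injective, whence the class of $\bar{\mathcal E}$ vanishes and $\bar{\mathcal E}$ splits.

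To finish, choose a complement $C\cong P$ to $A/D$ in $G/D$ and let $\tilde C\subseteq G(L/K)$ be its preimage, so $1\to D\to\tilde C\to P\to 1$ with $D$ central in $\tilde C$ and elementary abelian. Picking a chain of subgroups $0=D_0\subset D_1\subset\cdots\subset D_k=D$ with $[D_j:D_{j-1}]=2$ --- each $D_j$ normal in $\tilde C$ because it is central --- the successive quotients $Q_j:=\tilde C/D_{k-j}$ exhibit $\tilde C$ as built from $Q_0=\tilde C/D\cong G(E/K)$ by a sequence of central extensions with kernel $\Z/2\Z$. Setting $Q:=\tilde C$ and taking $s\colon Q\hookrightarrow G(L/K)$ to be the inclusion, the composite of $s$ with $G(L/K)\to G(E/K)$ has kernel $\tilde C\cap A=D$ and therefore coincides with $\pi\colon Q\to G(E/K)$, which is the required splitting. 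The technical points to watch are keeping the semisimplicity and direct-summand statements over $P$ itself --- not merely over $\tilde S$, since in characteristic $2$ semisimplicity need not survive restriction --- and verifying carefully that the $\Delta_i$ are normal in $G(L_i/K)$ and pointwise $S_n$-fixed, so that $D$ is indeed central.
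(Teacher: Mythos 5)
Your proof is correct, but it is genuinely different from the one in the paper. The paper argues by induction on the number $r$ of fields $L_i$, with a case analysis (whether $E'\ne E$ or $E'=E$, and whether $G(L/L')$ is $0$, $V_e$ or $\Delta$); the central $\Z/2\Z$-extensions are produced one at a time, in the ``interesting case'' $G(L/L')\simeq\Delta$, via the long exact sequence in group cohomology. You instead give a direct, global argument: embed $G(L/K)$ in $\tilde G=\prod_i G(L_i/K)$, quotient by the central subgroup $\tilde\Delta=\prod_i\Delta_i$, observe that $\tilde A/\tilde\Delta$ is a semisimple $G(E/K)$-module (here Lemma \ref{invsub} and the surjectivity of $G(E/K)\to G(E_i/K)$, i.e.\ Lemma \ref{compaction}, do the work), so that $A/D$ is a direct summand and the class of $1\to A/D\to G/D\to G(E/K)\to 1$ injects into $H^2\bigl(G(E/K),\tilde A/\tilde\Delta\bigr)$, where it is killed by the complement $q(G)\subset\tilde S$ of the ambient split semidirect product. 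All the central layers then appear at once as a filtration of the single central elementary abelian $2$-group $D=G(L/E)\cap\tilde\Delta$. Your route buys a non-inductive proof that identifies the kernel of $Q\to G(E/K)$ concretely (its $\F_2$-dimension bounds the length $k$ of the tower) and absorbs uniformly the abelian extensions $L_i/K$ that the paper treats separately at the end; the paper's induction is more local and avoids having to verify the identification of the pushout with the subgroup $(\tilde A/\tilde\Delta)\cdot(G/D)$ of $\tilde G/\tilde\Delta$ --- a point you should spell out (it holds because $\tilde A$ is abelian, so conjugation by $g\in G$ on $\tilde A$ agrees with conjugation by $q(g)$). The two caveats you flag at the end (semisimplicity over $G(E/K)$ itself, and centrality of the $\Delta_i$) are real but both check out, the first precisely because each simple summand $V_e/\Delta$ is inflated from a quotient $G(E_i/K)$ onto which $G(E/K)$ surjects.
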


\begin{proof} 
We prove this by induction on $r$. From the structure of
  Weyl groups, the statement is true when $r=1$. 
Denote by $L'$
  (resp. $E'$) the compositum of the fields $L_1, \cdots, L_{r-1}$
  (resp. $E_1, \cdots, E_{r-1}$). By induction, we assume the
  existence of a extension $\pi': Q' \to G(E'/K)$ satisfying the
  properties of the lemma for the extension $G(L'/K)\to G(E'/K)$. 
We divide the proof into different
  cases, and first consider the cases where the extensions $L_i/K$ are
  non-abelian, i.e., $E_i\neq K$.    

\noindent{(i) $E'\neq E$.} Suppose $E'\cap E_r=F$, and $E_r$ is a
non-abelian extension of $K$.  From the structure
of normal subgroups of $W(\bD_n)$, $G(E_r/K)$ is isomorphic to either
$A_n$ or $S_n$. It follows that $E'\cap E_r=F$ is
either $K$ or  a quadratic extension of $K$. 

We claim that $L'\cap L_r=F$. Suppose not. Since $L'\cap L_r$ is a
normal extension contained inside $L_r$, if it is not equal to $F$,
then it has to contain $E_r$. This is equivalent to a surjective
homomorphism from $G(L'/K)\to G(E_r/K)$. Since $G(E_r/K)$ does not
contain any abelian normal subgroups, the map factors 
$G(L'/K)\to G(E'/K)\to G(E_r/K)$.  But this means $E'\supset E_r$,
contradicting the assumption that  $E'\cap E_r=F$. This proves the
claim.

By definition, $G(E/K)$ is the subgroup of $G(E'/K)\times G(E_r/K)$
consisting of pairs $(\sigma,\tau)\in G(E'/K)\times G(E_r/K)$
satisfying $\sigma|_{E'\cap E_r}=\tau|_{E'\cap E_r}$. Let $Q$ be the
extension of $G(E/K)$ obtained as a pullback  restricted to $G(E/K)$ 
of the extension
$Q'\times G(E_r/K)\to G(E'/K)\times G(E_r/K)$.
The composite map
\[ Q \to Q'\times G(E_r/K)\to G(L'/K)\times G(L_r/K), \]
splits the extension $ G(L'/K)\times G(L_r/K) \to
G(E'/K)\times  G(E_r/K)$, by the inductive hypothesis.  

To check that the image of $Q$  lands inside the subgroup $G(L/K)$, we
need to show that  for $(\sigma,\tau)\in G(L'/K)\times G(L_r/K)$, 
then $\sigma|_{L'\cap L_r}=\tau|_{L'\cap L_r}$. But $L'\cap L_r=E'\cap
E_r$.  Since $Q$ is an
extension of $G(E/K)$, the projection of the image of $Q$ to the group
$G(E'/K)\times  G(E_r/K)$ lands inside $G(E/K)$. 
This proves the compatibility of $\sigma$ and
$\tau$, and establishes the lemma in this case. 

\noindent{(ii) $E'=E$.} There is an isomorphism
$G(L/L')\simeq G(L_rE/L'\cap L_rE)$, and the latter group is a
module for the $G(E_r/K)$, contained inside $V_e$. By Lemma
\ref{invsub}, $G(L_rE/L'\cap L_rE)$ is isomorphic to either $V_e$ or
$\Delta$ or $0$. 

If it is $0$, then $L=L'$, and there is nothing to
prove. 

We argue now using cohomological language. Let $c$ denote the
cohomology class  in 
$H^2(G(E/K), G(L/E))$ corresponding to the extension 
given by Equation (\ref{extension1}).
We are required to show that there exists a  extension 
$p: Q\twoheadrightarrow G(E/K)$ of
$G(E/K)$ satisfying Property (1),  such
that the pullback $\pi^*(c)\in H^2(Q, G(L/E))$ is trivial, i.e., the
pullback to $Q$ of the extension given by Equation (\ref{extension1})
splits. 

Suppose  $G(L_rE/L'\cap L_rE)\simeq V_e$ as $G(E_r/K)$-module. 
In this case, the fields $L'$ and $L_rE$ are linearly disjoint over
$E$, and there is an isomorphism of $G(E/K)$-modules, 
$ G(L/E)\simeq G(L'/E)\times G(L_rE/E)$.
Hence,
 \[H^2(G(E/K),  G(L/E))\simeq H^2(G(E/K),  G(L'/E))\oplus 
H^2(G(E/K),  G(L_rE/E)).\]
By Lemma \ref{compaction}, the 
cohomology class in  $H^2(G(E/K), G(L/L'))$ defined by the extension
is inflated from the
cohomology class in  $H^2(G(E_r/K), G(L_r/L'\cap L_r))$. 
By Lemma \ref{normal}, the extension of $G(E_r/K)$ defined
by $G(L_rE/L'\cap L_rE)$ splits. The lemma follows by taking $Q=Q'$ and
$\pi=\pi'$.

We come to the interesting case, when 
\[ G(L/L')\simeq G(L_rE/L_rE\cap L')\simeq \Delta.\]
As a group $\Delta\simeq \Z/2\Z$. Since $G(L/L')$ is a
$G(E/K)$-module, by projecting to each  $G(L_iE/E)$, it is seen that $G(E/K)$
acts trivially on $G(L/L')\simeq  (\Z/2\Z)^t$ for some $t\geq 0$. 
The exact sequence, 
\[1\to G(L/L')\to G(L/E)\to G(L'/E)\to 1,\]
considered as $G(E/K)$-modules, 
yields the following exact sequence of cohomology groups, 
\begin{equation}\label{longexact}
 H^2(G(E/K), G(L/L'))\to H^2(G(E/K), G(L/E))\to H^2(G(E/K), G(L'/E)).
\end{equation}
By induction, there exists a  extension $\pi': Q'\to G(E/K)$ satisfying
Property (1) of the lemma,  such
that the class $\pi'^*(c')\in  H^2(Q', G(L'/E))$ is trivial. 
Pulling back the sequence
given by Equation (\ref{longexact}) to $Q'$, it follows that 
the cohomology class $\pi'^*(c')\in  H^2(Q',
G(L/E))$ is the image of a cohomology class $c''$ in  $H^2(Q',
G(L/L'))$. 

Since $G(E/K)$ acts trivially on $G(L/L')$, the cohomology
class of $c''$ defines a central extension, 
\[ 1\to G(L/L') \to Q\to Q'\to 1.\]
The cohomology class $c''$ 
can be killed by going to the central extension
$Q$. Since $G(L/L')\simeq (\Z/2\Z)^t$ for some $t\geq 0$, 
this proves both parts of the lemma.  

This proves the lemma, except in the cases when the field extensions
$L_i/K$ are abelian.  In this case, the Galois group $G(L_i/K)$ is
isomorphic to either $\Z/2\Z$ or   $(\Z/2\Z)^{n-1}$. Let $L'$
(resp. $L''$) be the
compositum of all the non-abelian (resp. abelian) extensions $L_i/K$.  
The field $L''$ can
be decomposed as $L''=F'F''$, where $F'=F\cap L'$ and $F''$ is
disjoint from $L'$. The Galois group $G(L/K)\simeq G(L'/K)\times
G(F''/K)$, and this maps to $G(E'/K)\simeq G(E/K)$, where $E$ and $E'$
are as notation used in the foregoing paras. Taking $Q=Q'$, satisfies
the conditions of the lemma, and this proves the lemma in all cases.   
\end{proof}

\subsection{Proof of Theorem \ref{nonquotient}}
We now come to the proof of Theorem \ref{nonquotient}, in the case when 
$G_1$ is either type $\bB_n, ~\bC_n$  and
$G_2$ is of type $\bD_n$, and $n$ is even. Suppose there is a surjective
map  $\theta: G(L/K) \to W(\bB_n)$. By Lemma \ref{abelianimage}, the
image $\theta(G(L/E))\subset V_e$. 

Consider the homomorphism $\theta\circ s: Q\to W(\bB_n)$, where $Q\to
G(L/K)$ is as in Lemma \ref{sd}. There is a sequence
  of maps 
$$Q=Q_k\to Q_{k-1}\to\cdots \to Q_0=G(E/K),$$ where each of the
extensions $Q_i\to Q_{i-1}$ is a central $\Z/2\Z$-extension. Arguing 
as in the proof of Lemma \ref{abelianimage}, by downward induction on
$i$,  
 it follows that the image under $\theta\circ s $
of the kernel of the projection $p: Q\to G(E/K)$ lands inside the
subgroup $\Delta\subset W(\bB_n)$. 

Going modulo $\Delta$, we have a map $G(E/K)\to W(\bB_n)/\Delta$, which
we continue to denote by $\theta$. Since $V/\Delta$ has no
$S_n$-invariant subspace, arguing as in the proof of Lemma
\ref{abelianimage}, it follows that the image of $G(E/K)$ 
intersects $V/\Delta$ trivially. It follows that 
 the image of $Q$ intersection  $V$ is contained inside $\Delta$. 

Going modulo $V_e$, we have that the map $\theta : G(L/K)\to W(\bB_n)$ 
factors via $Q\to (V/V_e)\times S_n$. But the projection of the image
of $Q$ to the first factor $V/V_e$ is trivial. Hence it follows that
$\theta$ cannot be a surjection, and Theorem \ref{nonquotient} follows
in this case.

\section{$\bB_4$ vs $\bF_4$}
\subsection{$\bF_4$-root system}
We study now the root system $\bF_4$ and its Weyl group $W(\bF_4)$
following (\cite{Bou}). 
Consider the four dimensional vector space over $\R^4$ with standard
basis vectors $e_1, \cdots, e_4$. Let $L_0=\Z^4$, and $L_1$ denote the
sublattice of $L_0$ consisting of vectors $x\in Z^4$ such that
$||x||^2\in 2\Z$. Denote by
$L_2=L_0+\Z\left(\frac{1}{2}(\sum_{i=1}^4e_i\right)$, the dual lattice of
  $L_1$ inside $\R^4$. 

The root system $R$ of $\bF_4$ can be described as those elements $x\in
L_2$ with $||x||^2$ equal to either $1$ or $2$.  The element $\ve$
(with a subscript) will denote either $1$ or $-1$. 
The root system of $\bF_4$ has 24 long roots of the form $\varepsilon_ie_i+
\varepsilon_je_j, ~1\leq i <j\leq 4$.  For the short roots, $8$ of them
are of the form $\varepsilon_ie_i, ~1\leq i\leq 4$,
 and $16$ of them are of the form $(\sum_{i=1}^4\varepsilon_ie_i)/2$. 

The long (resp. short) roots form a root system $R_l$ (resp. $R_s$) 
of type $\bD_4$. There is a homomorphism $W(\bF_4)\to {\rm
  Aut}(R_l)\simeq  {\rm Aut}(\bD_4)$,
which is injective since $R_l$ is of the same rank as $\bF_4$. Since
$R_l$ generates $L_1$, any automorphism of $R_l$ will leave $L_1$ and
$L_2$ invariant. Hence,   
\[ W(\bF_4)\simeq {\rm Aut}(\bD_4)\simeq W(\bD_4)\rtimes S_3, \]
where the second isomorphism follows from the fact that 
the outer automorphism group of $\bD_4$ is
isomorphic to $S_3$. It follows that the order of $W(\bF_4)$ is $1152$. 

The Weyl group $W(\bD_4)_l$ (resp. $W(\bD_4)_s$)   generated by the
reflections in the long (resp. short) roots are normal subgroups in
$W(\bF_4)$. The Weyl group $U_l\simeq S_3$ 
of the $\bA_2$ root system formed by the two
long vectors $e_2-e_1, e_3-e_2$ surjects onto the outer automorphism
group of $(\bD_4)_s$ with base given by $-e_1, -e_2,-e_3,
(e_1+\cdots+e_4)/2$. Similarly, the Weyl group 
$U_s\simeq S_3$ of the $\bA_2$-root
system formed by the two short vectors $u=(e_1+e_2+e_3+e_4)/2, -e_4$ 
surjects onto the outer automorphism
group of $(\bD_4)_l$ with base given by $-e_1-e_3, -e_2+e_3, e_2-e_4,
e_2+e_4$. Since the base roots defining $U_l$ and $U_s$ are mutually
orthogonal, the groups $U_l$ and $U_s$ commute with each other. This
yields a semidirect product, 
\begin{equation}\label{f4sd}
 W(\bF_4)\simeq A\rtimes (S_3\times S_3).
\end{equation}
The group $A$ is of cardinality $32$
and is the intersection of the two normal subgroups $W(\bD_4)_l$ and
$W(\bD_4)_s$.  
 In
$W(\bD_4)_l$, the normal subgroup $A$ is given by the semidirect product
$V_e\rtimes K_4$, where $K_4$ is the normal subgroup of order $4$
contained inside $S_4$, and $V_e$ is the even
subspace contained inside $(\Z/2\Z)^4$. 
\begin{lemma}\label{expf4}
The exponent of $W(\bF_4)$ is $24$.
\end{lemma}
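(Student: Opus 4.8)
The goal is to show $e(W(\bF_4)) = 24$. The strategy is to use the semidirect product structure $W(\bF_4) \simeq W(\bD_4) \rtimes S_3$ established above, together with the already-computed value $e(W(\bD_4)) = 12$ from Lemma \ref{exportho} (and the explicit table), and then to produce an element of order $24$ while showing no element has larger order.

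\emph{Lower bound.} First I would exhibit an element of order $24$. Since $24 = 2^3 \cdot 3$, and $e(W(\bD_4)) = 12$ already gives elements of orders dividing $12$, the extra factor of $2$ must come from the interaction with the outer $S_3$. The cleanest route is to use the alternative description $W(\bF_4) \simeq A \rtimes (S_3 \times S_3)$ from Equation (\ref{f4sd}), or else to recall that $W(\bF_4)$ contains $W(\bB_4)$ as a subgroup: indeed $\bB_4 \subset \bF_4$ can be realized by taking the $8$ short roots $\pm e_i$ together with the $24$ long roots $\pm e_i \pm e_j$, which form a $\bB_4$ (or $\bC_4$) root system, and the corresponding reflection subgroup is $W(\bB_4)$. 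By Lemma \ref{exportho} (or the table), $e(W(\bB_4)) = 24$, so $W(\bF_4)$ contains an element of order $24$ (e.g. the signed permutation $(\ve,\sigma)$ with $\sigma$ a $4$-cycle and $\ve$ chosen so $\ve_\sigma = (1,1,1,1)$, which has order $8$, composed appropriately — but it is simplest to just cite that $W(\bB_4) \le W(\bF_4)$). Hence $24 \mid e(W(\bF_4))$.

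\emph{Upper bound.} It remains to show that every element of $W(\bF_4)$ has order dividing $24$. Since $|W(\bF_4)| = 1152 = 2^7 \cdot 3^2$, any element order divides $2^7 \cdot 3^2$; I must rule out orders $16$, $9$, $48$, etc. Using $W(\bF_4) \simeq W(\bD_4) \rtimes S_3$: write $g = (w, s)$ with $w \in W(\bD_4)$, $s \in S_3$. If $s$ has order $1$, then $g \in W(\bD_4)$ and $\ord(g) \mid 12$. If $s$ has order $3$, then $g^3 = (w', 1)$ for some $w'$, so $\ord(g) = 3 \cdot \ord(w')$ if $\ord(w')$ is coprime to $3$... more carefully, $\ord(g)$ is a divisor of $3 \cdot e(W(\bD_4)) = 36$, but one checks $3 \nmid \ord(w')$ is not automatic; still $\ord(g) \mid \mathrm{lcm}(3, 12)\cdot(\text{something})$ — this needs care. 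If $s$ has order $2$, then $g^2 = (w'',1)$ with $w'' \in W(\bD_4)$, so $\ord(g) \mid 2 \cdot 12 = 24$. The delicate case is $\ord(s) = 3$: here $g^3 \in W(\bD_4)$, so $\ord(g) \mid 3 \cdot 12 = 36$, and I must exclude orders $9$, $18$, $36$. To exclude order $9$: a $3$-element projecting to order $3$ in $S_3$ would need $g^3$ to have order $3$ in $W(\bD_4)$; an element of order $3$ in $W(\bD_4) = V_e \rtimes S_4$ must project to a $3$-cycle in $S_4$, and conjugation by $g$ acts on $W(\bD_4)$ — I would argue that the image of $g$ in the order-$9$ Sylow $3$-subgroup of $W(\bF_4)$ cannot be cyclic, since the Sylow $3$-subgroup is $\Z/3 \times \Z/3$ (three-cycles in $S_4 \subset W(\bD_4)$ commute with the outer $S_3$-triality only up to the product structure — in fact $W(\bF_4)$ has Sylow $3$-subgroup of order $9$, and one checks it is elementary abelian because the two $S_3$'s in (\ref{f4sd}) commute). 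This kills order $9$ and hence $18$, $36$. The remaining orders $\le 24$ dividing $36$ and also consistent are then at most $24$.

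\emph{Main obstacle.} The real work is the upper bound in the case where $g$ projects to a $3$-cycle in the outer $S_3$ (equivalently, an element of order $3$ in one of the $S_3$ factors of (\ref{f4sd})). The cleanest argument I expect to use is: the Sylow $3$-subgroup $P$ of $W(\bF_4)$ has order $9$, and since $W(\bF_4) \simeq A \rtimes (S_3 \times S_3)$ with $A$ a $2$-group, $P$ is isomorphic to a Sylow $3$-subgroup of $S_3 \times S_3$, which is $\Z/3\Z \times \Z/3\Z$ — elementary abelian. Therefore every element of $W(\bF_4)$ of $3$-power order has order $1$ or $3$. Combined with the fact that the Sylow $2$-subgroup has exponent dividing... here I still need the $2$-part bound: an element $g$ with $\ord(g)$ a power of $2$ lies in a Sylow $2$-subgroup, and I bound its order by $g^2 \in W(\bD_4)$ (when $g \notin W(\bD_4)$, using the index-$2$... no, $[W(\bF_4):W(\bD_4)] = 6$). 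Better: any $2$-element $g = (w,s)$ has $s$ a $2$-element of $S_3$, so $\ord(s) \le 2$, hence $g^2 \in W(\bD_4)$, and the $2$-part of $e(W(\bD_4))$ is $4$, giving $\ord(g) \mid 8$. Then a general element has order dividing $\mathrm{lcm}(8,3) = 24$. So the key facts to assemble are: (i) the Sylow $3$-subgroup is elementary abelian of order $9$ (from (\ref{f4sd})); (ii) every $2$-element has order dividing $8$ (from $[W(\bF_4):W(\bD_4)]$ being... rather, every $2$-element projects to $S_3$-order $\le 2$, reducing to $W(\bD_4)$ whose $2$-exponent is $4$); (iii) combine via coprimality. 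The subtle point I'd double-check is step (ii): I want that a $2$-element's image in $S_3$ has order at most $2$ — true since $|S_3|_2 = 2$ — and then $g^2 \in W(\bD_4)$ with $2$-part of order dividing $4$, so $\ord(g) \le 8 < 24$, consistent. Thus $e(W(\bF_4)) = \mathrm{lcm}$ of these $= 24$.
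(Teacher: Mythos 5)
Your proof is correct and follows essentially the same route as the paper: the lower bound is obtained exactly as there, by locating a $W(\bB_4)$ reflection subgroup inside $W(\bF_4)$ and invoking Lemma \ref{exportho}. For the upper bound the paper argues directly from Equation (\ref{f4sd}) that the exponent divides $e(A)\cdot e(S_3\times S_3)=4\cdot 6=24$, while you bound the Sylow $2$- and $3$-exponents separately (by $8$, via $g^2\in W(\bD_4)$, and by $3$, via the elementary abelian Sylow $3$-subgroup) and multiply; both versions are valid and of comparable length once your exploratory detours are stripped out.
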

\begin{proof} The exponent of the group $A$ is $4$, and thus the
  exponent of  $W(\bF_4)$ can be atmost 24. On the other hand, the
  roots $e_1-e_2, e_2-e_3, e_3-e_4, e_4$ forms a base for a
  $\bB_4$-root system. By Lemma \ref{exportho}, 
the exponent of the $W(\bB_4)$ is 24 and this proves the lemma. 
\end{proof}
 
The center $\Delta \simeq \Z/2\Z$ of $W(\bF_4)$ is the commutator
subgroup of $A$ consisting of the transformations $\pm Id$.  
The main observation is the following: 
\begin{lemma}\label{f4lemma}
The $S_3\times S_3$-module $A/\Delta$ is irreducible. 
\end{lemma}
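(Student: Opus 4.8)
The plan is to make the $S_3 \times S_3$-module structure on $A$ completely explicit and then verify irreducibility of $A/\Delta$ by a direct inspection of submodules, using the fact that $|A/\Delta| = 16$ and $A/\Delta$ is a $4$-dimensional $\F_2$-vector space. First I would pin down $A$ concretely: inside $W(\bD_4)_l \simeq V_e \rtimes S_4$ we have $A = V_e \rtimes K_4$, a group of order $32$, with $V_e \simeq \F_2^3$ the even subspace of $(\Z/2\Z)^4$ and $K_4$ the Klein four-group of double transpositions in $S_4$. The center $\Delta$ is the subgroup $\{\pm \mathrm{Id}\}$, which is the diagonal $\Z/2\Z \subset V_e$ and also equals $[A,A]$. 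So $A/\Delta \simeq (V_e/\Delta) \rtimes K_4 \simeq \F_2^2 \times \F_2^2$ as a group, a $4$-dimensional $\F_2$-vector space, once we check $A/\Delta$ is abelian (which follows since $[A,A] = \Delta$). The two "$\F_2^2$" pieces are $V_e/\Delta$ (which by Lemma~\ref{invsub} is a simple module for either $A_4$ or $S_4$, hence for $K_4$) and the image of $K_4$.

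Next I would identify how $S_3 \times S_3$ acts. One factor, $U_l \simeq S_3$, arises as the outer automorphisms of $(\bD_4)_s$ and acts on the $W(\bD_4)_l$-picture; the other factor $U_s \simeq S_3$ arises as outer automorphisms of $(\bD_4)_l$. The key structural point is that $S_3 \times S_3$ acts transitively enough to mix $V_e/\Delta$ and the $K_4$-image: the triality $S_3$ permutes the three "types" of order-two elements (long-root reflections vs. the short-root data), and concretely interchanges (up to the quotient by $\Delta$) the subspace coming from $V_e$ with the subspace coming from $K_4$. I would make this precise by exhibiting, for a generator $t$ of the triality $S_3$ factor, its matrix on the $4$-dimensional space $A/\Delta$ in the basis adapted to the decomposition $(V_e/\Delta) \oplus (\mathrm{image\ of\ } K_4)$, showing it is an off-diagonal (swap-type) transformation, and for a transposition in the other $S_3$ factor, its action within each $2$-dimensional block being the nontrivial action coming from Lemma~\ref{invsub}.

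To conclude irreducibility: suppose $0 \neq M \subsetneq A/\Delta$ is a proper $S_3 \times S_3$-submodule. Restricting to $\{e\} \times S_3$ (or the relevant factor acting within blocks), $M \cap (V_e/\Delta)$ and the image of $M$ in $(\text{$K_4$-block})$ are $S_3$- (hence $K_4$-) submodules, and by Lemma~\ref{invsub} each of $V_e/\Delta$ and the $K_4$-block is simple of dimension $2$; so $M$ is built out of these two simple pieces. If $M$ meets $V_e/\Delta$ nontrivially it contains all of $V_e/\Delta$ (simplicity); applying a triality element $t$ which swaps the two blocks, $M$ then also contains the $K_4$-block, so $M$ is everything. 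The remaining case, $M \cap (V_e/\Delta) = 0$, forces $M$ to be a "graph" of an $S_3 \times S_3$-equivariant isomorphism $V_e/\Delta \to (K_4\text{-block})$; I would rule this out by checking that no such equivariant isomorphism is triality-compatible — concretely, the two $2$-dimensional modules are isomorphic as abstract $S_3$-modules but the triality element $t$ acts on the graph incompatibly (one checks $t$ does not preserve any such diagonal, e.g. by a direct computation with the explicit matrices of the previous paragraph). This exhausts all cases and proves $A/\Delta$ is irreducible.

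The main obstacle, and the step requiring genuine care rather than routine bookkeeping, is the explicit computation of the triality action on $A/\Delta$ — i.e., verifying that the outer $S_3$ from one $\bD_4$ realization genuinely swaps the $V_e$-part with the $K_4$-part modulo $\Delta$, rather than preserving each. Once that "off-diagonal" behavior is established, ruling out the diagonal submodule and hence irreducibility is a short finite check. I would organize the computation around the root-system description already given: track the action of the generators of $U_l$ and $U_s$ on the $24$ long roots and the $8 + 16$ short roots, pass to reflections, and read off the induced linear maps on the $32$-element group $A$ and its quotient by $\Delta$.
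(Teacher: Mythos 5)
Your overall strategy is the right one and is essentially the paper's: decompose $A/\Delta$ into the two $2$-dimensional blocks $V_e/\Delta$ and the image $\overline{K}_4$ of $K_4$, and show by explicit root-system computation that the $U_s$-factor of $S_3\times S_3$ fails to preserve the candidate invariant subspaces. (In fact your case analysis is more systematic than the paper's, which only explicitly rules out $V_e$.) But two of your supporting claims are false as stated, and one case is missing; as written the ``short finite check'' would not close.

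First, $V_e/\Delta$ is \emph{not} a simple $K_4$-module: $K_4$ acts \emph{trivially} on it, since a double transposition such as $(13)(24)$ sends $T_{12}$ to $T_{34}=T_{12}\cdot(-\mathrm{Id})\equiv T_{12}\pmod\Delta$. (This also means Lemma~\ref{invsub} cannot be quoted for $H=K_4$ in the way you do.) Simplicity of each block holds for the $S_3$-factors $U_l$ and $U_s$, and the argument must be run with those. Second, the generators of $U_s$ do not act as a ``swap'' of the two blocks: a direct computation (the one the paper records) gives $s_uT_{12}s_u = T_{34}\cdot(12)(34)\equiv T_{12}+\overline{(12)(34)}$, while $s_u$ \emph{centralizes} $K_4$; so $s_u$ acts unipotently, as $\bigl(\begin{smallmatrix} I & 0\\ N & I\end{smallmatrix}\bigr)$ in your block basis. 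This still kills the submodules $V_e/\Delta$ and the graph of the $U_l$-equivariant isomorphism, but it does \emph{not} kill the third candidate $M=\overline{K}_4$ itself (the graph of the zero map, which your dichotomy omits: ``$M\cap(V_e/\Delta)=0$'' forces $M$ to be the graph of a $U_l$-equivariant homomorphism $\overline{K}_4\to V_e/\Delta$, which may be zero rather than an isomorphism). Ruling out $\overline{K}_4$ requires the \emph{other} generator $s_{e_4}$ of $U_s$, for which $s_{e_4}(12)(34)s_{e_4}=T_{34}\cdot(12)(34)\notin \overline{K}_4\cdot\Delta$. With these corrections --- simplicity of the blocks under $U_l$, the actual unipotent form of the $U_s$-action, and the extra case $M=\overline{K}_4$ checked against $s_{e_4}$ --- your argument does go through and indeed documents the module structure more completely than the paper's proof.
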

\begin{proof}
The only non-trivial invariant subspace of $A$ with respect to the
action of $U$ is the group $V_e$ consisting of transformations with 
even number of sign changes on the standard basis vectors. For
$u=(e_1+e_2+e_3+e_4)/2$ and $x=(x_1,x_2,x_3,x_4)\in \R^4$, 
the reflection $s_u$ based at
$u$ is given by, 
\[ s_u(x)=x-\left(\sum_{i=1}^4x_i\right)u.\]
Let $T_{12}$ be a transformation in $V_e$ sending $e_1$ and $e_2$ to
$-e_1$ and $-e_2$ respectively and fixing $e_3, e_4$. Now, 
\[
s_uT_{12}s_u(e_1)=s_uT_{12}(e_1-u)=s_u(-e_1-(-e_1-e_2+u))=s_u(e_2-u)=e_2.\]
Thus the group $V$ does not leave $V_e$ invariant, and this proves the
lemma. 
\end{proof}

\subsection{Proof of Theorem \ref{nonquotient}}
We now come to the proof of Theorem \ref{nonquotient} in the case when
$G_1$ is of type $\bF_4$ and $G_2$ is of type $\bB_4$. Let $L$ be the
compositum of Galois extensions  $L_1,\cdots, L_r$ over $K$, such that
$G(L_i/K), ~i=1,\cdots, r$ is isomorphic to a normal subgroup of
$W(\bB_4)$. Let $E_i\subset L_i$ be the Galois extension of $K$, such
that $G(L_i/E_i)$ is the maximal normal $2$-group in $G(L_i/E_i)$. 
The Galois group $G(E_i/K)$ is a normal subgroup of $S_3$. 
Let $E=E_1\cdots E_r$ denote the compositum of the fields $E_i$. 

The group $G(L_iE/E)$ is a normal $2$-subgroup of
$N_i=G(L_i/K)$, which is a normal 
subgroup  of $W(\bB_4)$. Denote by $\overline{N}_i$ the image of $N_i$ 
with respect to the projection $W(\bB_4)\simeq C\rtimes S_3\to S_3$,
where $C$ is a $2$-group. Let $C_i=C\cap N_i$. It follows from the
structure of $W(\bB_4)$, that there is a filtration of $C_i$ by
$\overline{N}_i$-stable subgroups, such that the graded components are
abelian $2$-groups of cardinality at most $4$.

For any $i$, consider the projection map $G(L/E)\to G(L_iE/E)$. By
Lemma \ref{compaction}, this map 
is equivariant as modules for the projection map $G(E/K)\to
G(E_i/K)$.  It follows that there is a filtration of 
that there is a filtration of $G(L/E)$ by
$G(E/K)$-stable subgroups, such that the graded components are
abelian $2$-groups of cardinality at most $4$. 
 
Suppose there is a surjection from $G(L/K)\to W(\bF_4)$. Since $G(L/E)$
is a normal $2$-group, its image will land inside the subgroup $A$ of 
$ W(\bF_4)$. This induces a surjection from $G(E/K)\to S_3\times
S_3$. By Lemma \ref{f4lemma}, the image of $G(L/E)$ is contained
inside the center $\Delta$ of $W(\bF_4)$. 

Going modulo $\Delta$, we get a surjection from $G(E/K)\to
W(\bF_4)/\Delta$. Since  $W(\bF_4)/\Delta$ has no normal $3$-groups, 
the normal $3$-group contained inside $G(E/K)$  maps trivially to
$W(\bF_4)/\Delta$.  The
quotient of $G(E/K)$ by this normal $3$-grou is a $2$-group, and there
cannot be a surjection to $W(\bF_4)/\Delta$. This proves Theorem
\ref{nonquotient} in this case. 

\begin{ack} We thank Dipendra Prasad for useful
  discussions. 
\end{ack}


\begin{thebibliography}{xx}

\bibitem[BPR]{BPR} C. Bhagawat, S. Pisolkar, C. S. Rajan, {\em Commensurability and representation equivalent arithmetic lattices}, Int. Math. Res. Not., {\bf no. 8} (2014) 2017--2036.  

\bibitem[Bou]{Bou} N. Bourbaki, {\em Groupes et alg\`{e}bres de Lie,
    Chap. IV, V, VI}, 2\`{e}me \'{e}dition, Masson, Paris, 1981.

\bibitem[PlR]{PlR} V. P. Platonov and A. S. Rapinchuk, {\em Algebraic
Groups and Number Theory}, Academic Press, New York, 1994. 

\bibitem[PR1]{PRr} G. Prasad and A. S. Rapinchuk,  {\em Existence of irreducible
$\R$-regular elements in Zariski dense subgroups}, Math. Res. Letters,
{\bf 10} (2003) 21--32. 

\bibitem[PR2]{PRwc} G. Prasad and A. S. Rapinchuk, {\em Weakly
commensurable arithmetic groups and isospectral locally symmetric
spaces}, { Publ. Math. Inst. Hautes tudes Sci.}  {\bf 109} (2009),
113--184.

\bibitem[PR3]{PRl} G. Prasad and A. S. Rapinchuk, {\em On the fields 
generated by the lengths of closed geodesics in locally symmetric spaces}, 
{Geom. Dedicata} {\bf 172} (2014), 79--120.  
 
\bibitem[PR4]{PRg} G. Prasad and A. S. Rapinchuk, {\em Generic elements in 
Zariski-dense subgroups and isospectral locally symmetric spaces. Thin groups
and superstrong approximation}, Math. Sci. Res. Inst. Publ.,
{\bf 61}, Cambridge Univ. Press, Cambridge, (2014) 211--252.


\bibitem[T]{T} J. Tits, {\em Classification of algebraic semisimple
groups}, {Algebraic Groups and Discontinuous Groups}, Proc. Sym. Pure
Math. {\bf 9} 33--62, Amer. Math. Soc., Providence, 1966.



\bibitem[V]{V} 
V. E. Voskresenskii, {\em 
Maximal tori without effect in semisimple algebraic groups},
Mat. Zametki, 44:3 (1988), 309-318; 
Mathematical notes of the Academy of Sciences of the USSR 1988, 
Volume 44, Issue 3, pp 651-655.


\end{thebibliography}
\end{document}